\title{On the construction of virtual interior point source travel
  time distances from the hyperbolic Neumann-to-Dirichlet map}
\author{Maarten. de  Hoop\ \footnotemark[2]
\and Paul Kepley\ \footnotemark[3]
\and Lauri Oksanen \footnotemark[4]}
\newcommand{\norm}[1]{\left\|#1 \right\|}
\def\Cut{\mathcal C} 
\def\R{\mathbb{R}}
\def\p{\partial}
\DeclareMathOperator*{\argmin}{argmin} 
\DeclareMathOperator{\divergence}{div_g} 
\DeclareMathOperator{\gradient}{grad_g} 
\DeclareMathOperator{\diam}{diam}
\DeclareMathOperator{\dist}{dist}
\DeclareMathOperator{\linearSpan}{span}
\DeclareMathOperator{\WF}{WF}
\DeclareMathOperator{\Vol}{Vol_{\mu}}
\def\meas{m}
\def\boundaryConds{N_\mu}
\def\dV{dV_{\mu}}
\def\dS{dS_g}
\def\dtdS{dt\otimes\dS}
\DeclareMathOperator{\wavecap}{cap}
\newtheorem{assumption}{Assumption}
\begin{document}

%% make the title
\maketitle

%% Addresses:
\renewcommand{\thefootnote}{\fnsymbol{footnote}} 

\footnotetext[1]{This work was initiated during a visit by the first
  and third authors to the Institut Mittag-Leffler (Djursholm,
  Sweden). The work of the first and second authors was supported in
  part by the members of the Geo-Mathematical Imaging Group.}

\footnotetext[2]{Department of Computational and Applied Mathematics,
  Rice University, Houston, TX 77005 ({mdehoop@rice.edu}).}

\footnotetext[3]{Department of Mathematics, Purdue University, West
  Lafayette, IN 47907 ({pkepley@purdue.edu}).}

\footnotetext[4]{Department of Mathematics, University College London,
  Gower Street, London WC1E 6BT, UK ({l.oksanen@ucl.ac.uk}). The work
  of this author was partially supported by the Engineering and
  Physical Sciences Research Council (UK) grant EP/L026473/1.}
\renewcommand{\thefootnote}{\arabic{footnote}}

%% Set up the running details
\pagestyle{myheadings} 

\thispagestyle{plain} 

\markboth{MAARTEN DE HOOP, PAUL KEPLEY, AND LAURI
  OKSANEN}{CONSTRUCTING TRAVEL TIME DISTANCES}
%{CONSTRUCTING TRAVEL TIMES FROM THE HYPERBOLIC N-D MAP}

\begin{abstract}
We introduce a new algorithm to construct travel time distances
between a point in the interior of a Riemannian manifold and points on
the boundary of the manifold, and describe a numerical implementation
of the algorithm. It is known that the travel time distances for all
interior points determine the Riemannian manifold in a stable manner.
We do not assume that there are sources or receivers in the interior,
and use the hyperbolic Neumann-to-Dirichlet map, or its restriction,
as our data. Our algorithm is a variant of the Boundary Control
method, and to our knowledge, this is the first numerical
implementation of the method in a geometric setting.
\end{abstract}

\begin{keywords}
  Boundary Control method, travel time distances, wave equation
\end{keywords}

\begin{AMS}
  35R30, 35L05
\end{AMS}

\section{Introduction}

We consider the inverse boundary value problem for the acoustic wave
equation on a domain or on a Riemannian manifold $M$. The problem is
to recover a spatially varying sound speed $c$ inside the domain given
the corresponding Neumann-to-Dirichlet map, or its restriction on a
part of the boundary of the domain, say $\Gamma \subset \p M$.

In the acoustic case, waves propagate in a domain $M$ with speed $c$,
and the travel time of a wave between a pair of points $x$ and $y$ in
$M$ is given by the Riemannian distance $d(x,y)$ when computed with
respect to the travel time metric $c^{-2}dx^2$.  For this reason, it
is natural to formulate the inverse boundary value problem by using
concepts from Riemannian geometry. This also allows us to consider
anisotropic sound speeds in a unified way.

We present a new method to use the local restriction of the
Neumann-to-Dirichlet map to determine travel time distances of the
form $d(x,y)$ where $x$ is a point in the interior of the domain and
$y \in \Gamma$, that is, $y$ is a point in the set where we have
boundary measurements. We refer to these travel times as \emph{point
  source travel time data}, since the distance $d(x,y)$ corresponds to
the first arrival travel time from a (virtual) interior point source
located at $x$ as recorded at the boundary at $y$. We emphasize that
our method synthesizes the travel times from a point source in the
interior of $M$ without requiring an actual receiver or source at that
location.

Using the travel time distances we can introduce Dirac boundary
sources which generate waves the singularities of which focus in a
point. Such boundary sources have been referred to as focusing
functions in reflection seismology \cite{Wapenaar2014}.

To motivate our results, we note that travel times have previously
been shown to determine a Riemannian manifold with boundary see
e.g. \cite{Katchalov2001}. In particular, in the full boundary data
case, it has been shown that this determination is even stable
\cite{Katsuda2007}. Furthermore, it can be shown that travel times
determine shape operators that appear as data for the generalized Dix
method \cite{deHoop2014}. This is of particular interest in the
isotropic case, since that method allows for the local nonlinear
reconstruction of a sound-speed near geodesic rays.

Our method to determine boundary distances works by first using a
variant of the Boundary Control method (BC method) to determine
volumes of subdomains of $M$ referred to as wave caps. The volumes are
computed by solving a collection of regularized ill-posed linear
problems. The BC method originates from \cite{Belishev1987}, where it
was used to solve the inverse boundary problem for the acoustic
wave equation. We note that \cite{Belishev1992} was the first variant
of the BC method posed in a geometric setting. For a thorough overview
of the BC method, we refer to \cite{Katchalov2001} and
\cite{Belishev1997}. Regularization theory was first combined with the
BC method in \cite{Bingham2008}, and the procedure that we use to
determine volumes was developed in \cite{Oksanen2011}.

In the present paper, we introduce a procedure to construct point
source travel time data from the volumes of wave caps, and hence
reduce the inverse boundary value problem to the stable problem of
determining the Riemannian manifold $(M,g)$ given the point source
travel time data. In particular, our procedure splits the inverse
boundary value problem to an ill-posed but linear step (the volume
computation) and a non-linear but well-posed step (distance estimation
and reconstruction of the manifolds).

We describe a computational implementation of our method, and provide
a numerical example to demonstrate our technique. We remark that our
numerical example provides the first computational realization of a
geometric variant of the BC method. Moreover, we explain how the
instability of the volume computation step is manifest in our
numerical examples. All variants of the BC method that use partial
data contain similar unstable steps, and we believe that the
instability of the method reflects the ill-posedness of the inverse
boundary value problem itself. However, contrary to Calder{\'o}n's
problem, that is, the elliptic inverse boundary value problem
\cite{Calder'on1980, Sylvester1987}, it is an open question whether
the inverse boundary value problem for the wave equation is ill-posed
in general.  For results concerning the stability of Calder{\'o}n's
problem we refer to \cite{Alessandrini1988, Mandache2001}.

Also, we point out that under favorable geometric assumptions, the
hyperbolic inverse boundary value problem is known to have better
stability properties than Calder{\'o}n's problem, see
e.g. \cite{Bellassoued2011, Liu2012, Montalto2014, Stefanov1998,
  Stefanov2005} and references therein. However, these geometric
assumptions do not apply to the partial data problem that we consider
here.

\section{Statement of the Results}
\label{sec:1}
In this section we describe our data and assumptions, and what we
intend to recover from the data.

\subsection{Direct problem as a model for measurements}
We work in the following setting,

\begin{assumption}
$M$ is a smooth compact connected manifold with smooth boundary $\p
  M$, and $g$ is an unknown smooth Riemannian metric on $M$. 
\end{assumption}

Let $\mu \in C^\infty(\overline{M})$ be a strictly positive weight
function, note that we do not require any additional information about
$\mu$.  We consider the following wave equation on $(M,g)$ with
boundary source $f \in C_0^{\infty}((0,\infty) \times \p M)$,
\begin{equation}
  \label{eqn:ForwardProb}
  \begin{array}{lr}
    \partial_t^2 u(t,x) - \Delta_{g,\mu} u(t,x) = 0, & (t,x) \in (0,\infty) \times M,\\ 
    \boundaryConds u(t,x) = f(t,x), & (t,x) \in (0,\infty) \times \Gamma \\
    u(0,\cdot) = 0 ,\quad \partial_t u(0,\cdot) = 0, & x \in M.
  \end{array}
\end{equation}
The operator $\Delta_{g,\mu}$ is the weighted Laplace-Beltrami
operator, given by,
\begin{align*}
  \Delta_{g,\mu}w(x) := \mu^{-1} \divergence(\mu \gradient(w)),
\end{align*}
and, $\boundaryConds$ is the associated Neumann derivative,
\begin{align*}
  \boundaryConds w := -\mu \left\langle \nu, \gradient(w)\right\rangle_g,
\end{align*}
where $\nu$ is the inward pointing unit normal vector to $\p M$ in the
metric $g$, $\divergence$ and $\gradient$ are respectively the
divergence and gradient on $(M,g)$, and $\langle \cdot, \cdot
\rangle_g$ denotes the inner product with respect to the metric
$g$. We remark that $\Delta_{g,\mu}$ is defined so that we can also
handle the usual acoustic wave equation. Indeed, if we consider a
domain $M \subset \R^n$ along with a strictly positive function $c \in
C^\infty(\overline{M})$, then with respect to the conformally
Euclidean metric $g = c^{-2}dx^2$, the weight $\mu = c^{n-2}$ yields
$\Delta_{g,\mu} = c^2 \Delta$.

We now introduce our data model. We denote the solution to
(\ref{eqn:ForwardProb}) with Neumann boundary source $f$ by
$u^f(t,x)$. For $T > 0$, and $\Gamma \subset \p M$ an open set, we
define the local Neumann-to-Dirichlet operator associated with
(\ref{eqn:ForwardProb}) by,
\begin{align*}
  \Lambda_\Gamma^{2T} : f \mapsto u^f|_{(0,2T) \times \Gamma},
  \quad f \in C_0^\infty((0,2T) \times \Gamma).
\end{align*} 
The map $\Lambda_\Gamma^{2T}$ extends to a bounded operator on
$L^2((0,2T)\times \Gamma)$, see for instance \cite{Lasiecka1990}. For
data, we suppose that $\Lambda_{\Gamma}^{2T}$ is known.  We note that
$\Lambda_{\Gamma}^{2T}$ models boundary measurements for waves
generated with acoustic sources and receivers located on $\Gamma$,
where the waves are both generated and recorded on $\Gamma$ for $2T$
units of time.
%%%%%%%%%%%%%%%%%%% This is new. is it okay? %%%%%%%%%%%%%%%%%%
In addition, we note that in seismic applications the
Neumman-to-Dirichlet map appears in simultaneous source acquisition.
%%%%%%%%%%%%%%%%%%%%%%%%%%%%%%%%%%%%%%%%%%%%%%%%%%%%%%%%%%%%%%%%

Our primary interest is in constructing distances with respect to the
Riemannian metric $g$, and we denote the Riemannian distance between
the points $x,y \in M$ by $d(x,y)$. To simplify our distance
computation procedure, we assume:

\begin{assumption}
  \label{assuption:distances}
  The distances $d(y,z)$ are known for $y,z \in \Gamma$ with $d(y,z) <
  T$.
\end{assumption}

\noindent We note that this is not a major limitation since for $y,z
\in \Gamma$ with $d(y,z) < T$, the data $\Lambda_\Gamma^{2T}$
determines $d(y,z)$, see e.g. \cite[Section 2.2]{Dahl1}.

\subsection{Reconstruction of the point source travel time
  data}

We define $R(M)$ to be the set of \emph{boundary distance functions}
on $M$,
\begin{equation}
	R(M) = \{ r_x : x \in M, \text{ and for $z \in \p M$, $r_x(z)
          := d(x,z)$}\}.
\end{equation}
We note that, for $x\in M$ and $z \in \p M$, $r_x(z)$ gives the
minimum travel time from $x$ to $z$. With this interpretation,
$r_x(z)$ represents the first arrival time at $z$ from a wave
generated by a point source located at $x$.

In Section~\ref{section:SolvingForDistances} we develop a method to
synthesize values of $r_x$ from $\Lambda_\Gamma^{2T}$ for points $x$
indexed by a set of coordinates known as semi-geodesic
coordinates\footnote{Such coordinates are considered in seismology,
  where they are referred to as image ray coordinates
  \cite{Hubral1980}.}.  We refer to this procedure as forming
\emph{point source travel time data}, since our procedure reproduces
the travel time information for a point source located at $x$ without
having a source or receiver there.

The geometry of the supports of solutions to (\ref{eqn:ForwardProb})
inform our constructions. To be explicit, let $\tau$ be a function on
$\Gamma$ satisfying $0 \leq \tau(z) \leq T$ for all $z \in \Gamma$ and
define the \emph{domain of influence} of $\tau$,
$$   M(\tau) := \left\{ x \in M\ : \ \text{there exists a $z \in
     \bar \Gamma$ such that $d(x,z) \le \tau(z)$} \right\} .$$
We depict $M(\tau)$ in Figure \ref{subflt:domInfl}. Consider the set
\begin{equation}
  \label{eqn:Define_S_tau}
  S_\tau := \{(t, z) \in (0,T) \times \bar \Gamma : \ t \in (T -
    \tau(z), T) \}.
\end{equation} 
We recall that solutions to (\ref{eqn:ForwardProb}) exhibit finite
speed of propagation in the metric $g$, and specifically, if
$\supp(f) \subset S_\tau$ then $\supp(u^f(T,\cdot)) \subset M(\tau)$.

When $\tau$ is a multiple of an indicator function, we will
occasionally use a special notation for $M(\tau)$. To be specific, we
denote the indicator function of a set $S$ by $1_S$, and for $s \geq
0$ we will use the notation $M(\Gamma,s) := M(s1_\Gamma)$ and $M(y,s)
:= M(s1_{\{y\}})$.

We denote the unit sphere bundle $SM := \{\xi \in TM :\ |\xi|_g =
1\}$, and define the inward/outward pointing sphere bundles by $\p_\pm
S M := \{\xi \in \p SM :\ (\xi, \pm \nu)_g > 0 \}$, where $\nu$ is the
inner unit normal vector field on $\p M$.  We define the {\em exit
  time} for $(x, \xi) \in SM \setminus \overline{\p_+ S M}$, by
\begin{align*}
\tau_M(x, \xi) := \inf \{ s \in (0, \infty):\ \gamma(s; x, \xi) \in \p
M \},
\end{align*}
where $\gamma(\cdot; x, \xi)$ is the geodesic with the initial data
$\gamma(0) = x$, $\dot \gamma(0) = \xi$. 

For $y \in \Gamma$ we define $\sigma_{\Gamma}(y)$ to be the maximal
arc length for which the normal geodesic beginning at $y$ minimizes
the distance to $\Gamma$. That is,
  \begin{align*}
    \sigma_{\Gamma}(y) &:= \max \{ s \in (0, \tau_M(y, \nu)]:\ d(\gamma(s; y,
      \nu), \Gamma) = s\}.
  \end{align*}
We recall, see e.g. \cite[p. 50]{Katchalov2001} that
$\sigma_{\Gamma}(y) > 0$ for $y \in \Gamma$. Moreover, $\sigma_\Gamma$
is lower semi-continuous, see e.g. \cite[Lemma 12]{Lassas2014}. We
define
\begin{equation}
  x(y,s) := \gamma(s;y,\nu) \text{\quad for $y \in \Gamma$ and $0 \leq
    s < \sigma_{\Gamma}(y)$.}
\end{equation}
The mapping $(y,s) \mapsto x(y,s)$ is a diffeomorphism from $\{(y,s) :
y \in \Gamma, 0 \leq s < \sigma_\Gamma(y) \}$ onto its image, so we
will refer to $(y,s)$ as semi-geodesic coordinates for $x(y,s)$.  This
is a slight abuse of terminology, since the pair $(y,s)$ belongs to
$\Gamma \times [0,\infty)$ instead of a subset of $\R^n$. On the other
  hand, by selecting local coordinates on $\Gamma$ these
  ``coordinates'' can be made into legitimate coordinates.

Next, we recall the definition of the \emph{cut locus} of $\Gamma$,
\begin{align*}
  \Cut &= \left\{x\left(y,\sigma_{\Gamma}(y)\right) :\ y \in \Gamma\right\}.
\end{align*}
We depict $\Cut$ in Figure \ref{subflt:cutLocus}. Due to the
lower semi-continuity of $\sigma_\Gamma$ and the boundedness of
$\Gamma$, one sees that the distance between $\Gamma$ and $\Cut$ is
positive.

\begin{figure}[h!]
  \centering 
  
  \subfloat[]{
    \includegraphics[height = 1.4in]{./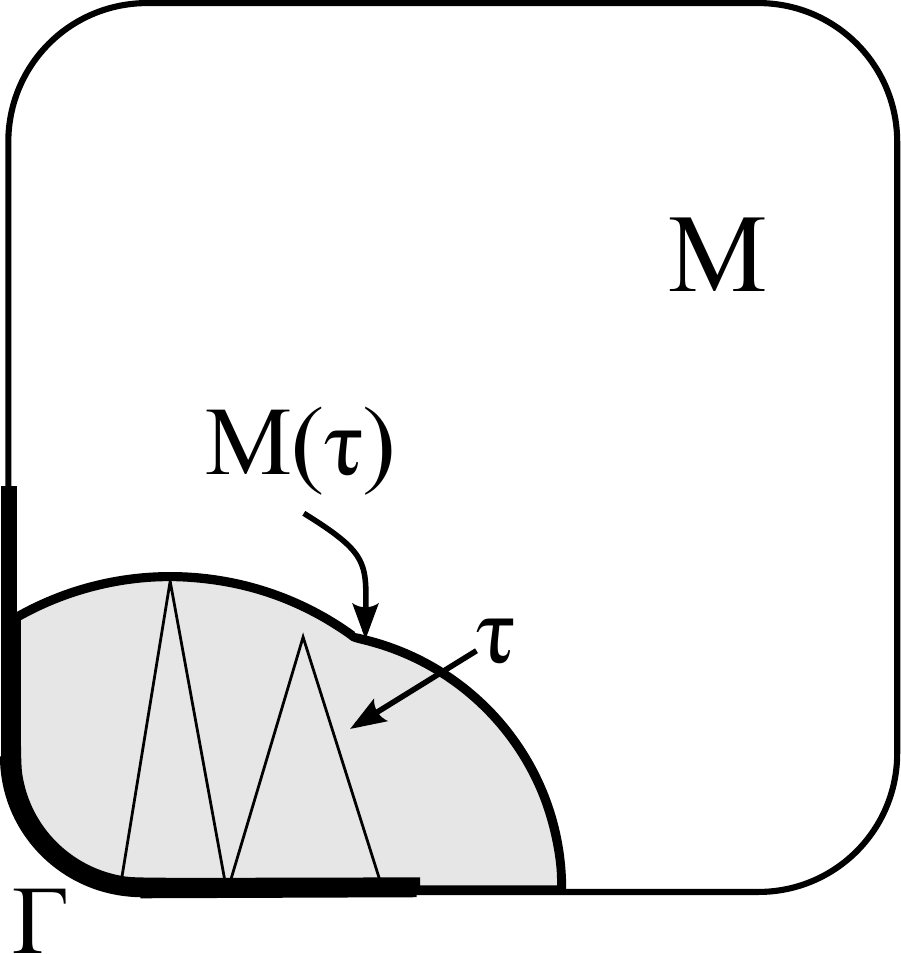}
    \label{subflt:domInfl}
  }
  \qquad \qquad \qquad \qquad
  \subfloat[]{    
    \includegraphics[height = 1.4in]{./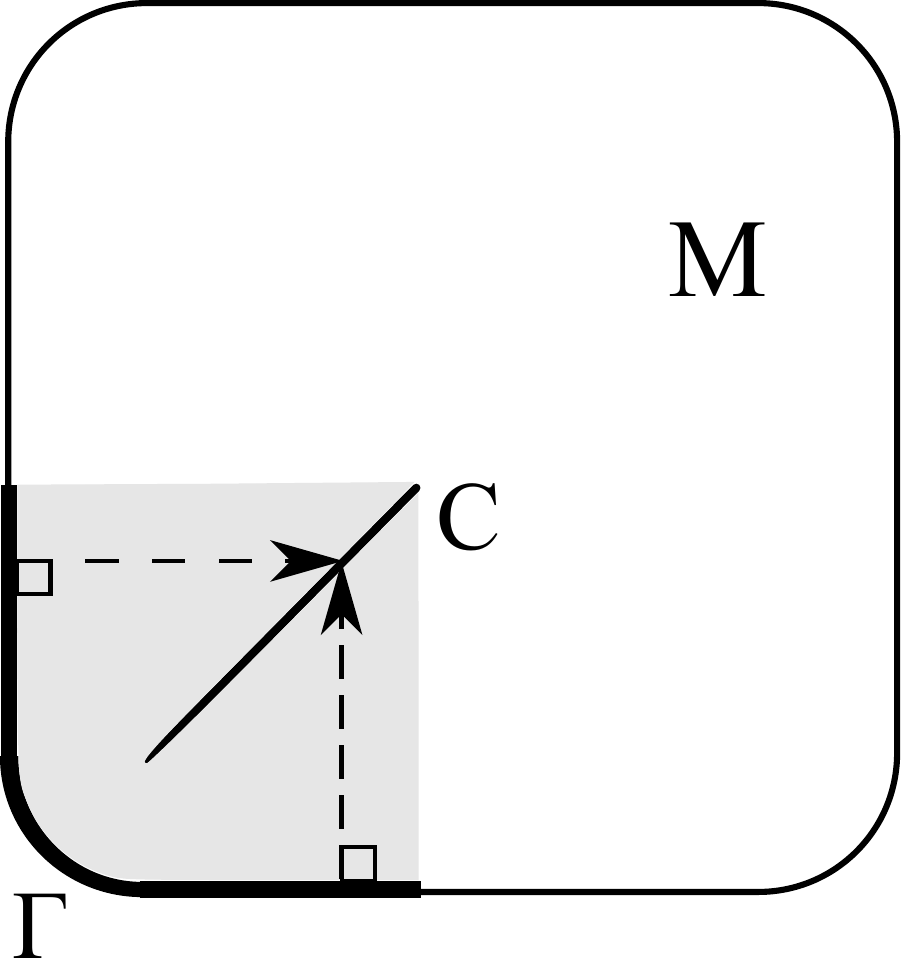}
    \label{subflt:cutLocus}
  }

  \caption{(a) The domain of influence for a $\tau$ in
    $C(\overline{\Gamma})$ along with the profile of $\tau$.  (b) The
    cut locus of $\Gamma$ along with a pair of equal length geodesics
    showing the break-down of the semi-geodesic coordinates at
    $\Cut$. The shaded region is the subset of $M$ that supports
    semi-geodesic coordinates.}
  
\end{figure}

We will use the following notions of volume: let $dV_g$ and $\dS$
denote the Riemannian volume densities of $(M,g)$ and $(\p M, g|_{\p
  M})$ respectively. We remark that $\dS$ is determined on $\Gamma$ by
$\Lambda_\Gamma^{2T}$, see e.g. \cite[Section 2.2]{Dahl1} so we assume
that it is known. We define the natural Riemannian volume density
associated with $\Delta_{g,\mu}$ by $\dV := \mu dV_g$. We remark that
its name derives from the fact that $\Delta_{g,\mu}$ is self-adjoint
on $L^2(M;\dV)$ with domain $H_0^1(M) \cap H^2(M)$. The volume density
$\dV$ determines a volume measure which we denote $\Vol$. In addition,
we will use the following shorthand notation for volumes of domains of
influence $m(\tau) := \Vol(M(\tau))$.

We now describe a set of geometrically relevant subsets whose volumes
will allow us to determine distances. Let $y \in \Gamma$, and let
$s,h > 0$ satisfy $s + h < \sigma_{\Gamma}(y)$. We define the
\emph{wave cap},
\begin{align*}
  \wavecap_\Gamma(y,s,h) := M(y,s+h) \setminus M^\circ(\Gamma,s),
\end{align*}
where $M^\circ(\Gamma,s) = \{ x \in M : d(x,\Gamma) < s\}$.  Note that
under the above hypotheses, $x(y,s)$ belongs to
$\wavecap_\Gamma(y,s,h)$. We will use volumes of wave caps to
determine distances.

Our main result is an algorithm to use the data
$\Lambda_{\Gamma}^{2T}$ to construct distances of the form
$r_{x(y,s)}(z)$ for $y,z \in \Gamma$ and $s > 0$ with $d(x(y,s),z) <
\min(\sigma_\Gamma(y),T)$. Our procedure can also be viewed as a
constructive proof of the following known result, see
e.g. \cite{Katchalov2001}:

\begin{theorem}
  \label{theorem:VolumesDetermineDistances}
  Let $y,z \in \Gamma$ and $s > 0$ with
  $d(x(y,s),z) < \min(\sigma_\Gamma(y),T)$. Then
  $\Lambda_{\Gamma}^{2T}$ determines $r_{x(y,s)}(z)$.  
  %% $\Vol(\wavecap_\Gamma(y,s,h))$
  %%   and $\Vol(\wavecap_\Gamma(z,s,r))$ for $0 <h,r$ with $h,r$
  %%   satisfying $s + h<T$ and $s + r < T$. In turn, these volumes can be
  %%   used to compute $d(z,x(y,s))$.
\end{theorem}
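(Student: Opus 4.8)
The plan is to express $r_{x(y,s)}(z) = d(x(y,s),z)$ as an explicit functional of volumes of domains of influence, and then to invoke the Boundary Control method to recover those volumes from $\Lambda_\Gamma^{2T}$. Thus the proof splits into two largely independent parts: a \emph{geometric} part, showing that the distance is encoded as a threshold in a one-parameter family of wave-cap volumes, and an \emph{analytic} part, showing that each such volume is determined by the data.

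For the geometric part, I would first record that, for $0 \le s < \sigma_\Gamma(y)$, the wave caps $\wavecap_\Gamma(y,s,h)$ collapse to the single point $x(y,s)$ as $h \to 0^+$. Indeed, $\bigcap_{h>0}\wavecap_\Gamma(y,s,h) = \{x : d(x,y) \le s,\ d(x,\Gamma) \ge s\}$, and since $y \in \Gamma$ forces $d(x,\Gamma) \le d(x,y)$, any such $x$ satisfies $d(x,y) = d(x,\Gamma) = s$; because $s < \sigma_\Gamma(y)$, the only such point is $x(y,s) = \gamma(s;y,\nu)$. Each cap has strictly positive volume for $h>0$ (it contains the geodesic arc $x(y,s'),\ s<s'\le s+h$), so the ratio below is well defined and I can compare against balls $M(z,r)$ centred at $z$, claiming
$$ d(x(y,s),z) = \sup\Big\{ r > 0 : \lim_{h\to 0^+}\frac{\Vol\big(\wavecap_\Gamma(y,s,h)\cap M(z,r)\big)}{\Vol\big(\wavecap_\Gamma(y,s,h)\big)} = 0\Big\}. $$
The justification is a threshold dichotomy: if $r < d(x(y,s),z)$ then a neighbourhood of $x(y,s)$ is disjoint from $M(z,r)$, so the intersection is empty for small $h$ and the ratio is $0$; if $r > d(x(y,s),z)$ then $x(y,s)$ lies in the open ball $M^\circ(z,r)$, which therefore contains the whole (shrinking) cap for small $h$, making the ratio $1$.

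For the analytic part, I would recover each volume from $\Lambda_\Gamma^{2T}$ by the volume-computation scheme of \cite{Oksanen2011}. The two tools are the Blagoveshchenskii identity, which expresses the inner products $\langle u^{f_1}(T,\cdot), u^{f_2}(T,\cdot)\rangle_{L^2(M;\dV)}$ in terms of $\Lambda_\Gamma^{2T}$ alone, and Tataru's unique continuation theorem (see e.g. \cite{Katchalov2001}), which makes the waves $\{u^f(T,\cdot) : \supp f \subset S_\tau\}$ dense in $L^2(M(\tau);\dV)$. Projecting the constant function onto this subspace and reading off its squared norm yields $m(\tau) = \Vol(M(\tau))$ for every admissible $\tau \le T$; this is the regularized ill-posed linear step. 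All volumes appearing above reduce to such quantities: since $M(\tau_1)\cup M(\tau_2) = M(\tau_1\vee\tau_2)$, writing $\wavecap_\Gamma(y,s,h)\cap M(z,r) = \big(M(y,s+h)\cap M(z,r)\big)\setminus M^\circ(\Gamma,s)$ and applying inclusion--exclusion expresses every term through $m(\cdot)$, while the point sources $s1_{\{y\}}$ and $r1_{\{z\}}$ are handled as limits of continuous $\tau$.

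The main obstacle is the geometric thresholding step rather than the volume recovery. One must verify that the caps genuinely shrink to the \emph{single} point $x(y,s)$ with strictly positive volume, so that the ratio is well defined and the dichotomy is sharp exactly at $r = d(x(y,s),z)$; here the hypothesis $s < \sigma_\Gamma(y)$ — uniqueness of the nearest boundary point and validity of the semi-geodesic coordinates — is essential. The two bounds in $d(x(y,s),z) < \min(\sigma_\Gamma(y),T)$ then play complementary roles: the bound by $\sigma_\Gamma(y)$ keeps the probing within the region where the cap-to-point collapse is clean, while the bound by $T$ guarantees that the balls $M(z,r)$ with $r$ slightly exceeding the distance remain within reach of the observation time, so the relevant $m(\tau)$ are data-determined; note moreover that $d(x(y,s),z) \ge d(x(y,s),\Gamma) = s$, so $s < T$ and all the caps lie within the observation time as well. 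Combining the geometric formula with the data-determinacy of each volume then shows that $\Lambda_\Gamma^{2T}$ determines $r_{x(y,s)}(z)$.
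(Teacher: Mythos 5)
Your proposal is correct and shares the paper's overall architecture: a geometric step in which the wave caps $\wavecap_\Gamma(y,s,h)$ collapse to $\{x(y,s)\}$ as $h \to 0$ and the distance to $z$ is read off from volumes of overlaps with metric balls about $z$, followed by an analytic step recovering each volume $m(\tau)$ from $\Lambda_\Gamma^{2T}$ via the Blagoveshchenskii identity, Tataru's unique continuation, and Tikhonov regularization. Where you genuinely diverge is the extraction formula. The paper fixes $h$, characterizes $d(z,\wavecap_\Gamma(y,s,h))$ exactly as $s$ plus the infimum of those $r$ for which $\wavecap_\Gamma(y,s,h)\cap M(z,s+r)$ has positive volume (Lemma \ref{lemma:VolumeComparison}), and only then lets $h\to 0$ (Lemma \ref{lemma:CapDistanceApprox}); you fix $r$, let $h\to 0$ first, and classify $r$ by whether the normalized overlap tends to $0$ or $1$. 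Your order of limits buys a small simplification: since you only consider $r$ strictly below or strictly above $d(x(y,s),z)$, the shrinking cap is eventually either disjoint from the closed ball or contained in the open ball, so you never need the fact, cited by the paper from Oksanen's work, that the topological boundary of $M(z,s+r)$ has measure zero. The price is that you need $\diam(\wavecap_\Gamma(y,s,h))\to 0$ rather than merely the singleton intersection; the paper proves this only for its alternative half-volume criterion, though it follows from the same nested-compact-sets argument you would use anyway.

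Two spots in your sketch are under-justified but not wrong. First, the singleton claim $\{w : d(w,y)=d(w,\Gamma)=s\}=\{x(y,s)\}$ is where the paper does real geometric work (Hopf--Rinow for length spaces plus $C^1$ regularity of minimizers, to show that any path minimizing simultaneously from $y$ and from $\Gamma$ must leave $y$ normally and hence coincide with $\gamma(\cdot;y,\nu)$); you correctly isolate this as the crux and the hypothesis $s<\sigma_\Gamma(y)$ as essential, but leave it asserted. Relatedly, your parenthetical that the cap has positive volume ``because it contains the geodesic arc'' should be: the arc points $x(y,s')$, $s<s'<s+h$, lie in the nonempty \emph{open} set $\{d(\cdot,y)<s+h\}\cap\{d(\cdot,\Gamma)>s\}\subset\wavecap_\Gamma(y,s,h)$. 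Second, for the spikes $h1_{\{y\}}$ and $r1_{\{z\}}$, the paper replaces them exactly by the continuous functions $\tau_y^R=R-d(\cdot,y)$, which generate the same domains of influence (Lemma \ref{lemma:SpikeReplacements}, using the known boundary distances); your ``limits of continuous $\tau$'' also works but needs an additional continuity-of-measure argument that the exact replacement avoids.
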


The constructive proof will be given in
Section~\ref{sec:ConstructingDistances}. We note that this construction can
also be viewed as a series of experiments. Following the proofs in
Section~\ref{sec:ContainsAlgorithm}, we provide an algorithmic overview of our
distance computation procedure.

\section{The Boundary Control method}
\label{section:SolvingForDistances}

In this section we describe the elements of the BC method required to
determine $m(\tau)$ from $\Lambda_{\Gamma}^{2T}$. In addition, we
briefly contrast our technique to alternative approaches to the BC
method, and provide an overview of some computational aspects of the
BC method.

The purpose of the BC method is to gain information about the interior
of $M$ by processing boundary measurements for waves that propagate in
$M$. To begin, we recall the \emph{control map}, $W_\tau$, which takes
a Neumann boundary source $f$ to the corresponding solution at time
$T$. That is, let $\tau : \overline{\Gamma} \rightarrow [0,T]$ be
continuous or a step function with open level sets and define,
\begin{equation*} 
  W_{\tau}f := u^f(T,\cdot), \quad W_\tau : L^2(S_\tau) \rightarrow L^2(M).
\end{equation*} 
The map $W_\tau$ is continuous, compact even, as a map from
$L^2(S_\tau)$ into $L^2(M)$, see e.g. \cite{Lasiecka1990,
  Tataru1998}. We will write $W := W_\tau$ in the special case that
$\tau \equiv T$, and we note that in this case, $S_\tau = (0,T) \times
\Gamma$. Thus, for any other $\tau : \Gamma \rightarrow [0,T]$,
$W_{\tau}$ can be viewed as a restriction of $W$ to sources supported
in $S_\tau$.

One cannot directly observe the output of $W_\tau$ from boundary
measurements because its output is a wave in the interior of
$M$. Thus, in order to deduce information about the interior of $M$,
one forms the \emph{connecting operator},
\begin{equation*}
  K_\tau := W_\tau^* W_\tau, \quad K : L^2(S_\tau) \rightarrow
  L^2(S_\tau).
\end{equation*} 
The continuity of $W_\tau$ implies that $K_\tau$ is a continuous
operator on $L^2(S_\tau)$. The practical utility of $K_\tau$ is that
it can be computed by processing the boundary data,
$\Lambda_\Gamma^{2T}$, see (\ref{eq_connecting}) below. This fact was
first observed by Blagoveschenskii in the \mbox{$1$+$1$-dimensional} case
\cite{Blago1971}. We remark that $K_\tau$ derives its name from the
fact that it connects the inner product on boundary sources with the
inner product on waves in the interior. That is, for $f$, $h$ in
$C_0^\infty(S_\tau)$,
\begin{align}
  \label{eq_Blago_identity}
  (u^f(T,\cdot), u^h(T,\cdot))_{L^2(M; \dV)} &= (f, K
  h)_{L^2(S_\tau;\,\dtdS)}.
\end{align}

We next recall the ``Blagoveschenskii Identity,'' which gives an
expression for $K_\tau$ in terms of the data
$\Lambda_{\Gamma}^{2T}$. In particular, we use the expression for $K_\tau$
appearing in \cite{Oksanen2013},
\begin{align}
  \label{eq_connecting}
  &K_\tau = P_\tau \left(J\Lambda^{2T}_{\Gamma} \Theta - R
  \Lambda^{T}_{\Gamma} R J \Theta \right)P_\tau.
\end{align}
Here, $\Theta : L^2((0,T) \times \Gamma) \rightarrow L^2((0,2T) \times
\Gamma)$ is the inclusion (zero padding) given by:
\begin{equation*}
  \Theta f(t,\cdot) := \left\{
  \begin{array}{cl}
    f(t,\cdot) & 0 < t \leq T, \\
    0 & T < t < 2T,
  \end{array}
  \right.
\end{equation*} 
$R : L^2((0,T) \times \Gamma) \rightarrow L^2((0,T) \times \Gamma)$ is
the time reversal on $(0,T)$ given by:
\begin{equation*}
  Rf(t,\cdot) := f(T - t,\cdot) \quad 0 < t < T,
\end{equation*} 
 $J: L^2((0,2T) \times \Gamma) \rightarrow L^2((0,T) \times \Gamma)$
is the time integration, given by:
\begin{align*}
  &Jf(t,\cdot) := \frac{1}{2} \int_t^{2T - t} f(s,\cdot) \,ds \quad 0 < t < T,
\end{align*}
and $P_\tau : L^2((0,T) \times \Gamma) \rightarrow L^2((0,T) \times
\Gamma)$ is the orthogonal projection onto $L^2(S_\tau)$ given by:
\begin{equation}
  \label{eqn:pTau}
  P_\tau f := 1_{S_\tau} \cdot f.
\end{equation}
We will use the special notation $K := K_\tau$ when $\tau \equiv
T$. In this case, the operator $P_\tau$ coincides with the identity
and (\ref{eq_Blago_identity}) can be written as $K =
J\Lambda_\Gamma^{2T} \Theta - R \Lambda_\Gamma^T R J \Theta$. Thus,
for any $\tau$, the operator $K_\tau$ can be expressed as $K_\tau =
P_\tau K P_\tau$.

\subsection{Overview of BC method variants}
\label{sec:BlagoTypeIdents}

There are several variants of the BC method, all of which are based on
solving control problems of the form: Given a function $\phi$ on $M$,
and a function $\tau : \overline{\Gamma} \rightarrow [0,T]$, find a
boundary source $f$ such that
\begin{align}
\label{eq_BC_control}
W_\tau f = \phi.
\end{align}
In general, this problem is not solvable since the range of $W_\tau$
is generally not closed. On the other hand, it can be shown that
\emph{approximate controllability} holds, that is, there is a sequence
$(f_j)_{j=1}^\infty \subset C_0^\infty(S_\tau)$ such that
\begin{align}
  \label{eq_BC_lim_u}
  \lim_{j \to \infty} W_\tau f_j = 1_{M(\tau)} \phi,
  \quad \text{in $L^2(M)$}.
\end{align}
The approximate controllability follows from 
the hyperbolic unique continuation result by Tataru \cite{Tataru1995}
by a duality argument, see e.g. \cite[p. 157]{Katchalov2001}.

The original version of the BC method \cite{Belishev1987} uses the
Gram-Schmidt orthonormalization to find a sequence
$(f_j)_{j=1}^\infty$ satisfying (\ref{eq_BC_lim_u}).  The method was
implemented numerically in \cite{Belishev1999}, and it requires
choosing an initial system of boundary sources, see step 2 in
\cite[p. 233]{Belishev1999}.  No constructive way to choose the
initial boundary sources is given, and some choices may lead to an
ill-conditioned orthonormalization process, see the discussion in
\cite{Bingham2008}.

More recently, Bingham, Kurylev, Lassas and Siltanen introduced a
variant of the BC method where the Gram-Schmidt process is replaced by
a quadratic optimization \cite{Bingham2008}.  Their method is posed in
the case $\Gamma = \p M$, and is based on constructing a sequence
$(f_j)_{j=1}^\infty$ such that the limit (\ref{eq_BC_lim_u}) becomes
focused near a point.  To elaborate, their method considers an
arbitrary $h \in L^2((0,T) \times \p M)$ with $\phi$ chosen as $\phi =
Wh$.  For a point $y \in \p M$ and small enough $0 < s,r < T$, they
choose appropriate $\tau$ to produce a sequence of sources
$(f_j)_{j=1}^\infty \subset S_\tau$ such that $W f_j \rightarrow
1_{\wavecap_{\p M}(y,s,r)} Wh$.  However, no constructive procedure to
choose the boundary source $h$ is given, and some choices may lead to
sequences such that this limit vanishes also near the point where it
should be focused, see the assumption on the non-vanishing limit in
\cite[Corollary 2]{Bingham2008}.  We note that the method
\cite{Bingham2008} has not been implemented numerically.

Our approach employs a quadratic optimization similar to
\cite{Bingham2008} but differs from it by selecting $\phi = 1$ in
place of $W h$. By solving the approximate control problems for this
choice of $\phi$, we can compute volumes $m(\tau)$ for certain
functions $\tau : \overline{\Gamma} \to [0,T]$.  We note that the
method we use to compute these volumes was developed in
\cite{Oksanen2011, Oksanen2013a}, and it was applied to an inverse
obstacle problem in \cite{Oksanen2013}.  Here we show how to compute
the boundary distance functions from the volumes $m(\tau)$.

Our method contains only constructive choices of boundary sources, and
it allows us to understand the numerical errors that we make in each
step of the algorithm. In Section~\ref{sec:Numerics}, we see that the
dominating source of error in our numerical examples is related to the
instability of the control problem (\ref{eq_BC_control}) under the
constraint $\supp(f) \subset S_\tau$.  This instability is inherent in
all the variants of the BC method mentioned above.

In addition to \cite{Belishev1999}, the only multidimensional
implementation of a variant of the BC method, that we are aware of, is
\cite{Pestov2010}.  This variant is based on solving the control
problem (\ref{eq_BC_control}) without the constraint
$\supp(f) \subset S_\tau$.  The target function $\phi$ is chosen to be
harmonic, and the method exploits the density of products of harmonic
functions in $L^2(M)$.  Such an approach works only in the isotropic
case, that is, in the case of the wave equation
$\p_t^2 - c(x)^2 \Delta$ where the sound speed $c(x) > 0$ is scalar
valued.

We also mention that the original version of the BC method
\cite{Belishev1987} assumes the wave equation to be isotropic, and
that in \cite{Liu2012}, an approach similar to \cite{Pestov2010} was
shown to recover a lowpass version of the sound speed in a Lipschitz
stable manner under additional geometric assumptions.  Furthermore, we
refer to \cite{Kabanikhin2005} for a comparison of the BC method and
other inversion methods in the $1$+$1$-dimensional case.

\subsection{Regularized estimates of volumes of domains of influence}
\label{subsection:RegEstOfVol}

We now explain how we pose our approximate control problems, and how
we use their solutions to compute volumes of domains of influence. To
begin, let $\tau : \overline{\Gamma} \rightarrow [0,T]$ be either a
step function with open level sets or $\tau \in
C(\overline{\Gamma})$. We obtain an approximate solution to
(\ref{eq_BC_control}) with right-hand side $\phi = 1$, by solving the
following minimization problem: for $\alpha > 0$ let
\begin{equation}
  \label{eqn:argminProb}
  f_\alpha := \argmin_{f \in L^2(S_\tau)} \|u^f(T,\cdot) -
  1\|_{L^2(M;\dV)}^2 + \alpha \|f\|_{L^2(S_\tau;\dtdS)}^2.
\end{equation}
As was shown in \cite{Oksanen2011}, for $\tau$ as above: this problem
is solvable, the solution can be obtained by solving a linear problem
involving $K_\tau$, and $u^{f_\alpha}(T,\cdot) \rightarrow
1_{M(\tau)}$ as $\alpha \rightarrow 0$. For the convenience of the
reader, we outline the proof here, and moreover, we recall that the
approximate control solutions, $f_\alpha$, can be used to compute
$m(\tau)$.

To show that (\ref{eqn:argminProb}) has a solution we first recall two
results about Tikhonov regularization.  For proofs see
e.g. \cite[Th. 2.11]{Kirsch2011} and \cite{Oksanen2013}, respectively.

\begin{lemma}
  \label{lemma:TikhExist}
  Suppose that $X$ and $Y$ are Hilbert spaces. Let $y \in Y$ and let $A
  : X \to Y$ be a bounded linear operator.  Then for all $\alpha > 0$
  there is a unique minimizer of
  \begin{align*}
    \norm{A x - y}^2 + \alpha \norm{x}^2
  \end{align*}
  given by $x_\alpha = (A^* A + \alpha)^{-1} A^* y$.
\end{lemma}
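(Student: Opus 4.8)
The plan is to recognize the objective $J(x) := \norm{Ax - y}^2 + \alpha \norm{x}^2$ as a strictly convex quadratic functional on the Hilbert space $X$ and to locate its unique minimizer by the first-order stationarity condition, which here is exact rather than merely necessary because of convexity. First I would expand
\begin{align*}
  J(x) = \langle (A^*A + \alpha) x, x \rangle - 2\,\mathrm{Re}\,\langle A^* y, x \rangle + \norm{y}^2,
\end{align*}
using $\norm{Ax}^2 = \langle A^*A x, x\rangle$ and the definition of the adjoint. The operator $B := A^*A + \alpha$ is self-adjoint and satisfies the coercivity estimate $\langle B x, x\rangle = \norm{Ax}^2 + \alpha \norm{x}^2 \ge \alpha \norm{x}^2$, so it is bounded below by $\alpha > 0$. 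A self-adjoint operator bounded below by a positive constant is boundedly invertible (its spectrum lies in $[\alpha, \infty)$), so $B^{-1}$ exists and is bounded.

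With $B$ invertible, I would finish by completing the square: writing $x_\alpha := B^{-1} A^* y$, a direct computation gives
\begin{align*}
  J(x) = \langle B (x - x_\alpha), x - x_\alpha \rangle + \big(\norm{y}^2 - \langle B x_\alpha, x_\alpha\rangle\big).
\end{align*}
Since the second term is independent of $x$ and the first is $\ge \alpha \norm{x - x_\alpha}^2 \ge 0$ with equality if and only if $x = x_\alpha$, the functional $J$ attains its minimum at the single point $x_\alpha = (A^*A + \alpha)^{-1} A^* y$, which is exactly the claimed formula. This one identity delivers existence and uniqueness simultaneously, so no separate compactness or lower-semicontinuity argument is needed.

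As an alternative route giving the same stationarity equation, one could take the Gateaux derivative: for any $h \in X$, $\frac{d}{dt} J(x + t h)\big|_{t=0} = 2\,\mathrm{Re}\,\langle A^*(Ax - y) + \alpha x, h \rangle$, which vanishes for all $h$ precisely when $(A^*A + \alpha) x = A^* y$, i.e.\ the normal equations. There is no genuinely hard step here; the only point requiring a little care is the justification that $A^*A + \alpha$ is \emph{boundedly} invertible rather than merely injective, which is where coercivity (equivalently, the spectral bound) must be invoked, and the observation that strict convexity upgrades the critical point to the unique global minimizer. The completing-the-square identity handles both cleanly and avoids appealing to any abstract minimization theorem.
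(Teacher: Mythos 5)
Your proof is correct: the expansion of the functional, the coercivity bound $\langle (A^*A+\alpha)x,x\rangle \ge \alpha\norm{x}^2$ giving bounded invertibility, and the completing-the-square identity together deliver existence, uniqueness, and the formula $x_\alpha = (A^*A+\alpha)^{-1}A^*y$ in one stroke. The paper does not prove this lemma itself but cites it (Kirsch, Th.\ 2.11), and your argument is essentially the standard textbook proof given there, so there is nothing to flag.
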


\begin{lemma}
  \label{lemma:TikhConv}
  Suppose that $X$ and $Y$ are Hilbert spaces.  Let $y \in Y$ and let
  $A : X \to Y$ be a bounded linear operator with range $R(A)$.  Then
  $A x_\alpha \to Qy$ as $\alpha \to 0$, where $x_\alpha = (A^* A +
  \alpha)^{-1} A^* y$, $\alpha > 0$, and $Q : Y \to \overline{R(A)}$
  is the orthogonal projection.
\end{lemma}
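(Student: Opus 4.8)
The plan is to reduce the whole statement to the functional calculus of the single self-adjoint, nonnegative operator $B := AA^*$ on $Y$, and to read off the limit from the spectral theorem. The starting point is the push-through identity $A(A^*A + \alpha)^{-1} = (AA^* + \alpha)^{-1}A$, which follows from the algebraic identity $A(A^*A + \alpha) = (AA^* + \alpha)A$ (both sides equal $AA^*A + \alpha A$) after left- and right-multiplying by the respective resolvents; note that $A^*A + \alpha$ and $AA^* + \alpha$ are invertible for $\alpha > 0$. Applying this to the definition of $x_\alpha$ gives
\[
A x_\alpha = A(A^*A+\alpha)^{-1}A^* y = (B+\alpha)^{-1} B\, y = \phi_\alpha(B)\, y,
\qquad \phi_\alpha(\lambda) := \frac{\lambda}{\lambda + \alpha}.
\]
Thus the question becomes: what is the strong limit of the uniformly bounded operators $\phi_\alpha(B)$ as $\alpha \to 0$?

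First I would record two elementary facts about the scalar functions $\phi_\alpha$: they satisfy $0 \le \phi_\alpha(\lambda) \le 1$ for all $\lambda \ge 0$ and all $\alpha > 0$, and they converge pointwise as $\alpha \to 0$ to the indicator $1_{(0,\infty)}$, since $\phi_\alpha(\lambda) \to 1$ for every $\lambda > 0$ while $\phi_\alpha(0) = 0$ for all $\alpha$. Writing $B = \int_0^{\|B\|} \lambda \, dE_\lambda$ via the spectral theorem, the finite positive Borel measure $d\mu_y(\lambda) := d(E_\lambda y, y)$, of total mass $\norm{y}^2$, controls the relevant norm through $\norm{\phi_\alpha(B)y - 1_{(0,\infty)}(B)y}^2 = \int |\phi_\alpha(\lambda) - 1_{(0,\infty)}(\lambda)|^2 \, d\mu_y(\lambda)$. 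The dominated convergence theorem, with the constant $1$ as dominating function, then yields $\phi_\alpha(B) y \to 1_{(0,\infty)}(B) y$ in $Y$.

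It remains to identify $1_{(0,\infty)}(B)$ with the projection $Q$. By functional calculus $1_{(0,\infty)}(B) = I - E_{\{0\}}$, where $E_{\{0\}}$ is the orthogonal projection onto $\ker B$. I would then check $\ker B = \ker(AA^*) = \ker A^*$: if $AA^* y = 0$ then $\norm{A^* y}^2 = (AA^* y, y) = 0$, and the reverse inclusion is immediate. Since $\ker A^* = R(A)^\perp = \overline{R(A)}^{\,\perp}$, the complementary projection $I - E_{\{0\}}$ is exactly the orthogonal projection onto $\overline{R(A)}$, that is $Q$. Combining with the previous step gives $A x_\alpha = \phi_\alpha(B) y \to Q y$.

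The main obstacle, and the point where the hypotheses genuinely matter, is that $R(A)$ is \emph{not} assumed closed, so the spectrum of $B$ may accumulate at $0$ and one cannot pass to the limit by naive continuity of $\phi_\alpha$ at the bottom of the spectrum. This is precisely why the uniform bound $\phi_\alpha \le 1$ is essential: it furnishes the dominating function needed for dominated convergence and prevents the small-$\lambda$ part of the spectral measure from contributing in the limit. For readers who prefer to avoid the spectral theorem, the same conclusion follows from an $\varepsilon/3$ argument: the operators $\phi_\alpha(B)$ are uniformly bounded by $1$, they converge to the identity on the dense subspace $R(AA^*) \subset \overline{R(A)}$ because $\norm{\phi_\alpha(B)Bw - Bw} = \alpha\norm{(B+\alpha)^{-1}Bw} \le \alpha \norm{w} \to 0$, and they vanish on $\ker A^* = \overline{R(A)}^{\,\perp}$; uniform boundedness then upgrades this to strong convergence to $Q$ on all of $Y$, using $\overline{R(AA^*)} = \overline{R(A)}$.
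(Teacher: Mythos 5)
Your proof is correct. Note, however, that the paper does not actually supply its own argument for this lemma: it defers to the cited references (the companion of Lemma~\ref{lemma:TikhExist} in Kirsch's book and, for this statement, the reference \cite{Oksanen2013}), so there is no in-paper proof to match step for step. Your route --- the push-through identity $A(A^*A+\alpha)^{-1}=(AA^*+\alpha)^{-1}A$, reduction to $\phi_\alpha(B)y$ with $B=AA^*$ and $\phi_\alpha(\lambda)=\lambda/(\lambda+\alpha)$, dominated convergence against the spectral measure, and the identification $1_{(0,\infty)}(B)=Q$ via $\ker AA^*=\ker A^*=\overline{R(A)}^{\perp}$ --- is a complete and standard argument, and it has the virtue of working for an arbitrary bounded $A$, whereas the textbook treatment of Tikhonov regularization is often phrased for compact operators via a singular system $(\mu_j,x_j,y_j)$, where the same computation appears coordinatewise as $\mu_j^2/(\mu_j^2+\alpha)\to 1$. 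Your closing $\varepsilon/3$ alternative (uniform bound $\norm{\phi_\alpha(B)}\le 1$, convergence on the dense subspace $R(AA^*)\subset\overline{R(A)}$, vanishing on $\ker A^*$) is also sound and avoids the spectral theorem entirely; either version would serve as a self-contained proof here.
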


Since $W_\tau$ is bounded, the first Lemma implies that
(\ref{eqn:argminProb}) is solvable. To apply the second lemma to our
current setting, we must describe the range of $W_\tau$ and compute
$W_\tau^*1$. Toward that end, we recall that $\supp(W_\tau f) \subset
M(\tau)$ by finite speed of propagation. When $\tau$ is a step
function, Tataru's unique continuation \cite{Tataru1995} implies that
the inclusion
\begin{align}
  \label{density1}
  \{W_\tau f;\ f \in L^2(S_\tau)\} \subset L^2(M(\tau)),
\end{align}
is dense, see e.g. \cite[Th. 3.10]{Katchalov2001}. The result was
extended to the case of $\tau \in C(\overline{\Gamma})$ in
\cite{Oksanen2011}. Thus $\overline{R(W_\tau)} = L^2(M(\tau))$ for the
functions $\tau$ under consideration. To compute $W_\tau^* 1$, we note
an equality similar to (\ref{eq_Blago_identity}) that is satisfied for
$f \in L^2(S_\tau)$:
\begin{align}
  \label{eq_volume_identity}
  (u^f(T,\cdot), 1)_{L^2(M; \dV)} = (f,P_\tau b)_{L^2((0,T) \times \p
    M; dt \otimes dS_g)}.
\end{align}
Here, $P_\tau$ is defined by (\ref{eqn:pTau}), and $b(t, x) := T - t$
for $(t,x) \in (0,T) \times \Gamma$. Thus, $W_\tau^* 1 = P_{\tau}b$.

Applying Lemmas \ref{lemma:TikhExist} and \ref{lemma:TikhConv} to the
observations above, we see that for each $\alpha > 0$, equation
(\ref{eqn:argminProb}) has a unique solution $f_\alpha$, given by:
\begin{align}
  \label{tikhonov}
  f_\alpha := (W_\tau^* W_\tau + \alpha)^{-1} W_\tau^* 1 = (K_\tau +
  \alpha)^{-1} P_\tau b,
\end{align}
thus $f_\alpha$ is obtained from the data. Moreover, the waves $W_\tau
f_\alpha$ satisfy $W_\tau f_\alpha \to Q_\tau 1$ in $L^2(M)$ as
$\alpha$ tends to zero, where $Q_\tau$ is the projection of $L^2(M)$
onto the subspace $\overline{R(W_\tau)} = L^2(M(\tau))$.  Note that
$Q_\tau 1 = 1_{M(\tau)}$. Using this fact and applying
(\ref{eq_volume_identity}) to $f_\alpha$ we conclude,
\begin{align}
  \label{eq_reconstruction_vol}
  m(\tau) = \lim_{\alpha \to 0+} (f_\alpha , P_\tau b)_{L^2((0,T)
    \times \p M; dt \otimes dS_g)}.
\end{align}
Thus we can compute $m(\tau)$ from operations performed on the data
$\Lambda^{2T}_{\Gamma}$.

\section{Constructing distances}
\label{sec:ConstructingDistances}

In this section, we present our proof of Theorem
\ref{theorem:VolumesDetermineDistances}. We accomplish this through a
sequence of lemmas that are designed to illuminate the steps required
to turn the theorem into an algorithm. In addition, we provide an
alternative technique to determine distances, which we use in our
computational implementation.

\subsection{Constsructive proof of Theorem \ref{theorem:VolumesDetermineDistances}}

The following lemma provides a bound on the distance between a point
and a wave-cap,
\begin{lemma}
  \label{lemma:VolumeComparison}
  Let $y \in \Gamma$, $s \in (0, \sigma_{\Gamma}(y))$, and $h \in
  (0,\sigma_\Gamma(y) - s)$.  Let $z \in \Gamma$ and $r > 0$. Then
  $d(z,\wavecap_\Gamma(y,s,h)) < s+r$ if and only if
  \begin{align}
    \label{eqn:VolumeComparison}
    \meas(s 1_\Gamma + r 1_z + h 1_y) - \meas(s 1_\Gamma + r 1_z) < \meas(s 1_\Gamma + h
    1_y) - \meas(s 1_\Gamma).
  \end{align}
  We note that (\ref{eqn:VolumeComparison}) tests whether there is an
  overlap between the sets $\wavecap_\Gamma(y,s,h)$ and
  $\wavecap_\Gamma(y,s,r)$, see Figure \ref{fig:WavecapIntersections}.
\end{lemma}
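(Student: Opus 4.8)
The plan is to first turn the volume inequality (\ref{eqn:VolumeComparison}) into an overlap condition via inclusion--exclusion, and then to identify that overlap condition with the distance bound. Writing $A := M(\Gamma,s)$, $B := M(y,s+h)$ and $C := M(z,s+r)$, the definition of $M(\tau)$ gives, up to the null set $\bar\Gamma \subset \p M$, the identifications $M(s1_\Gamma) = A$, $M(s1_\Gamma + h1_y) = A\cup B$, $M(s1_\Gamma + r1_z) = A\cup C$ and $M(s1_\Gamma + r1_z + h1_y) = A\cup B\cup C$, since the threshold function equals $s$ on $\Gamma$ and is raised to $s+h$ at $y$ and to $s+r$ at $z$. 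Substituting into (\ref{eqn:VolumeComparison}) and using finite additivity of $\Vol$, the right-hand side becomes $\Vol(A\cup B) - \Vol(A) = \Vol(B\setminus A)$ and the left-hand side becomes $\Vol(A\cup B\cup C) - \Vol(A\cup C) = \Vol(B\setminus(A\cup C))$. As $B\setminus(A\cup C)\subset B\setminus A$ with complement $(B\setminus A)\cap C$ inside $B\setminus A$, the inequality (\ref{eqn:VolumeComparison}) is equivalent to
\begin{equation*}
  \Vol\bigl((B\setminus A)\cap C\bigr) > 0 .
\end{equation*}
Because the metric sphere $\{x : d(x,\Gamma)=s\}$ is null, $B\setminus A$ differs from $\wavecap_\Gamma(y,s,h) = B\setminus M^\circ(\Gamma,s)$ by a null set, so the inequality is equivalent to $\Vol\bigl(\wavecap_\Gamma(y,s,h)\cap M(z,s+r)\bigr) > 0$, the overlap condition announced after the statement.

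It then remains to prove that $d(z,\wavecap_\Gamma(y,s,h)) < s+r$ holds if and only if $\Vol\bigl(\wavecap_\Gamma(y,s,h)\cap M(z,s+r)\bigr) > 0$. Since $M(z,s+r) = \{d(\cdot,z)\le s+r\}$ differs from the open ball $\{d(\cdot,z)<s+r\}$ by the null sphere $\{d(\cdot,z)=s+r\}$, I would replace $M(z,s+r)$ by the open ball throughout. The direction ``$\Leftarrow$'' is then immediate in contrapositive form: if $d(z,\wavecap_\Gamma(y,s,h))\ge s+r$, then no point of the compact set $\wavecap_\Gamma(y,s,h)$ lies in the open ball $\{d(\cdot,z)<s+r\}$, so the intersection is empty and the volume vanishes.

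The substance is the direction ``$\Rightarrow$''. Assuming $d(z,\wavecap_\Gamma(y,s,h))<s+r$, compactness of $\wavecap_\Gamma(y,s,h)$ yields a witness $x_1$ with $d(x_1,y)\le s+h$, $d(x_1,\Gamma)\ge s$ and $d(z,x_1)<s+r$, and the task is to manufacture a whole open set of such points. I would do this by pushing $x_1$ into the interior $U := \{d(\cdot,y)<s+h\}\cap\{d(\cdot,\Gamma)>s\}$ of the wave cap, which is nonempty because $x(y,s+h)\in U$. The constraint $d(\cdot,z)<s+r$ is strict, hence stable under small perturbations, and likewise any of the wave-cap constraints that is already strict at $x_1$. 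Where $d(x_1,y)=s+h$, moving a short way toward $y$ along a minimizing geodesic strictly decreases $d(\cdot,y)$ while leaving $d(\cdot,\Gamma)>s$ and $d(\cdot,z)<s+r$ intact by continuity; where $d(x_1,\Gamma)=s$, I would move in a direction that strictly increases $d(\cdot,\Gamma)$. Either way one reaches $U\cap\{d(\cdot,z)<s+r\}$, an open subset of $\wavecap_\Gamma(y,s,h)\cap M(z,s+r)$, which therefore has positive volume.

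I expect the main obstacle to be precisely this last perturbation when $d(x_1,\Gamma)=s$, because $d(\cdot,\Gamma)$ need not be differentiable at $x_1$ (the witness may lie on the cut locus $\Cut$), so an ascent direction is not automatic. The clean remedy is to argue at the level of measure rather than along a single curve: it suffices to show that $\{d(\cdot,\Gamma)>s\}$ occupies positive volume in every neighbourhood of $x_1$, i.e. that $x_1$ is not an interior local maximum of $d(\cdot,\Gamma)$ with a plateau at the value $s$. This follows from $|\nabla d(\cdot,\Gamma)|=1$ almost everywhere together with the null-measure of $\Cut$, which together show that $\wavecap_\Gamma(y,s,h)$ equals the closure of its interior near the relevant witnesses. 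Establishing this regularity of the wave cap, and thereby ruling out the degenerate plateau configuration, is where the real work lies; the inclusion--exclusion reduction and the easy direction are routine.
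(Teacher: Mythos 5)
Your inclusion--exclusion reduction and your handling of the easy direction coincide with the paper's proof. The paper likewise identifies $\meas(s1_\Gamma+h1_y)-\meas(s1_\Gamma)$ with $\Vol(\wavecap_\Gamma(y,s,h))$ and $\meas(s1_\Gamma+r1_z+h1_y)-\meas(s1_\Gamma+r1_z)$ with $\Vol(\wavecap_\Gamma(y,s,h)\setminus M(z,s+r))$, and it disposes of the case $d(z,\wavecap_\Gamma(y,s,h))\ge s+r$ by noting that the intersection is then contained in the topological boundary of $M(z,s+r)$, which is a null set by the result cited from \cite{Oksanen2011}. Up to your substitution of open balls for closed ones, these steps are the same and are fine.

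The gap is in the forward direction, and you have flagged it yourself: you never prove that $d(z,\wavecap_\Gamma(y,s,h))<s+r$ forces $\Vol(\wavecap_\Gamma(y,s,h)\cap M(z,s+r))>0$; you end by saying that establishing the needed regularity of the wave cap ``is where the real work lies.'' That leaves the equivalence unproved. Moreover, the remedy you sketch is unsound as stated: the facts that $|\nabla d(\cdot,\Gamma)|=1$ almost everywhere and that $\Cut$ is null do \emph{not} imply that $\wavecap_\Gamma(y,s,h)$ is the closure of its interior near an arbitrary witness $x_1$ with $d(x_1,\Gamma)=s$. The level set $\{d(\cdot,\Gamma)=s\}$ can contain a lower-dimensional piece of the cut locus of $\Gamma$ along which $\{d(\cdot,\Gamma)>s\}$ does not accumulate (think of two sheets of $\Gamma$ facing each other across a channel of width $2s$: the midline is a one-dimensional ``whisker'' of $\{d(\cdot,\Gamma)\ge s\}$), and almost-everywhere statements cannot detect such a null set, so they cannot rule out the witness sitting on it. For comparison, the paper treats this step as immediate, asserting in one line that when $d(z,\wavecap_\Gamma(y,s,h))<s+r$ the intersection of $\wavecap_\Gamma(y,s,h)$ and $M(z,s+r)$ contains a non-empty open set; you are right that this deserves an argument (e.g.\ perturbing within the family $x(\cdot,\cdot)$ where the foliation by distance to $\Gamma$ is regular), but your proposal supplies neither that argument nor a working substitute.
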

\begin{proof}
As $h > 0$ and $h < \sigma_{\Gamma}(y) - s$, we see that
$\wavecap_\Gamma(y,s,h)$ contains a non-empty open set. In particular,
it has strictly positive measure.  Moreover, if
$d(z,\wavecap_\Gamma(y,s,h)) < s+r$ then the intersection of
$\wavecap_\Gamma(y,s,h)$ and $M(z,s+r)$ contains a non-empty open set
and has strictly positive measure.

Notice that $\meas(s 1_\Gamma + h 1_y)$ is the measure of $M(y,s+h)
\cup M(\Gamma,s)$ and that $\meas(s 1_\Gamma + h 1_y) - \meas(s
1_\Gamma)$ is the measure of $\wavecap_\Gamma(y,s,h)$.  Indeed,
\begin{align*}
  \Vol(M(y,s+h) \cup M(\Gamma,s)) &= \Vol((M(y,s+h) \cup M(\Gamma,s)) \setminus M(\Gamma,s))\\  
  &\quad+\Vol(M(\Gamma,s))  \\
  &= \Vol(\wavecap_\Gamma(y,s,h)) + \Vol(M(\Gamma,s)).
\end{align*}

Analogously, $\meas(s 1_\Gamma + r 1_z + h 1_y) - \meas(s 1_\Gamma + r
1_z)$ is the measure of
%% \begin{align*}
%%   &M(y,s+h) \setminus (M(\Gamma,s) \cup M(z,s+r)) \\
%% \quad = \wavecap_\Gamma(y,s,h) \setminus M(z,s+r).
%% \end{align*}
\begin{equation*}
  M(y,s+h) \setminus (M(\Gamma,s) \cup M(z,s+r)) = \wavecap_\Gamma(y,s,h) \setminus M(z,s+r).
\end{equation*}
If $d(z,\wavecap_\Gamma(y,s,h)) < s+r$ then the intersection of
$\wavecap_\Gamma(y,s,h)$ and $M(z,s+r)$ has strictly positive measure,
whence (\ref{eqn:VolumeComparison}) holds.

On the other hand, if $d(z,\wavecap_\Gamma(y,s,h)) \ge s+r$ then
$\wavecap_\Gamma(y,s,h) \cap M(z,s+r)$ is contained in the topological
boundary of $M(z,s+r)$ which is of zero measure
\cite{Oksanen2011}. Thus
\begin{align*}
  \meas(s 1_\Gamma + r 1_z + h 1_y) - \meas(s 1_\Gamma + r 1_z) =
  \meas(s 1_\Gamma + h 1_y) - \meas(s 1_\Gamma),
\end{align*}
and (\ref{eqn:VolumeComparison}) does not hold. \qquad
\end{proof}

\begin{figure}[h!] 
  \centering
  \begin{floatrow}
    %% Note: I was having some issues with spacing here.
    \ffigbox[1.5in]
    {
      \subfloat[]{\includegraphics[height=1.4in]{./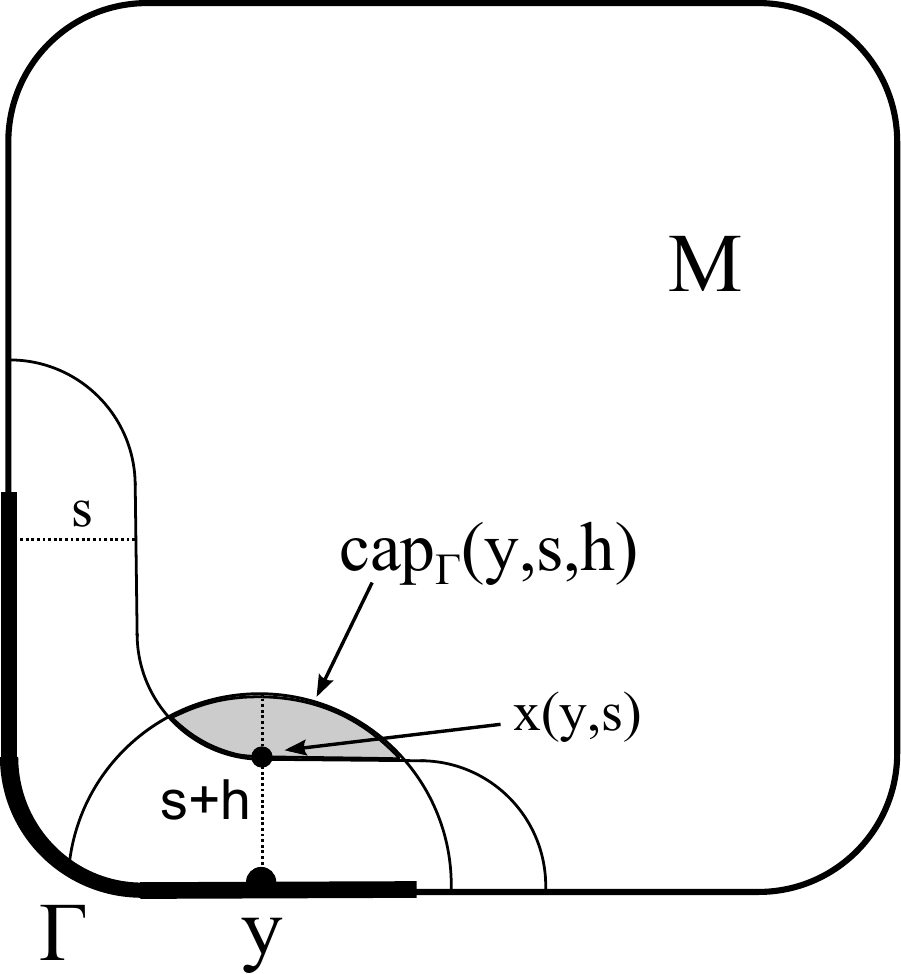}}
    }
    {
      \caption{A wave cap}
      \label{fig:WaveCap}
    }

    \ffigbox[3.6in]
    {
      \subfloat[$s+r > d(z,\wavecap_\Gamma(y,s,h))$]{
        \label{subflt:greaterthn}
        \includegraphics[height=1.4in]{./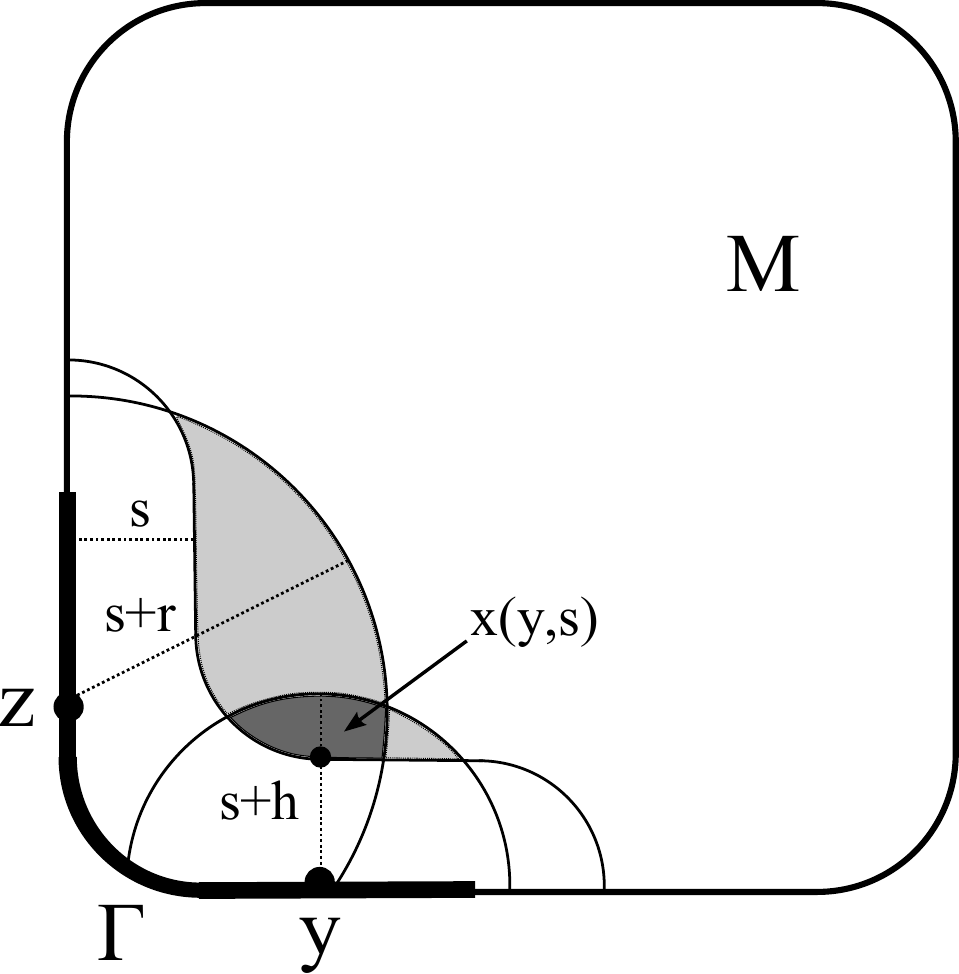}
      }
      \qquad
      \subfloat[$s+r \leq d(z,\wavecap_\Gamma(y,s,h))$]{
        \label{subflt:lessthn}
        \includegraphics[height=1.4in]{./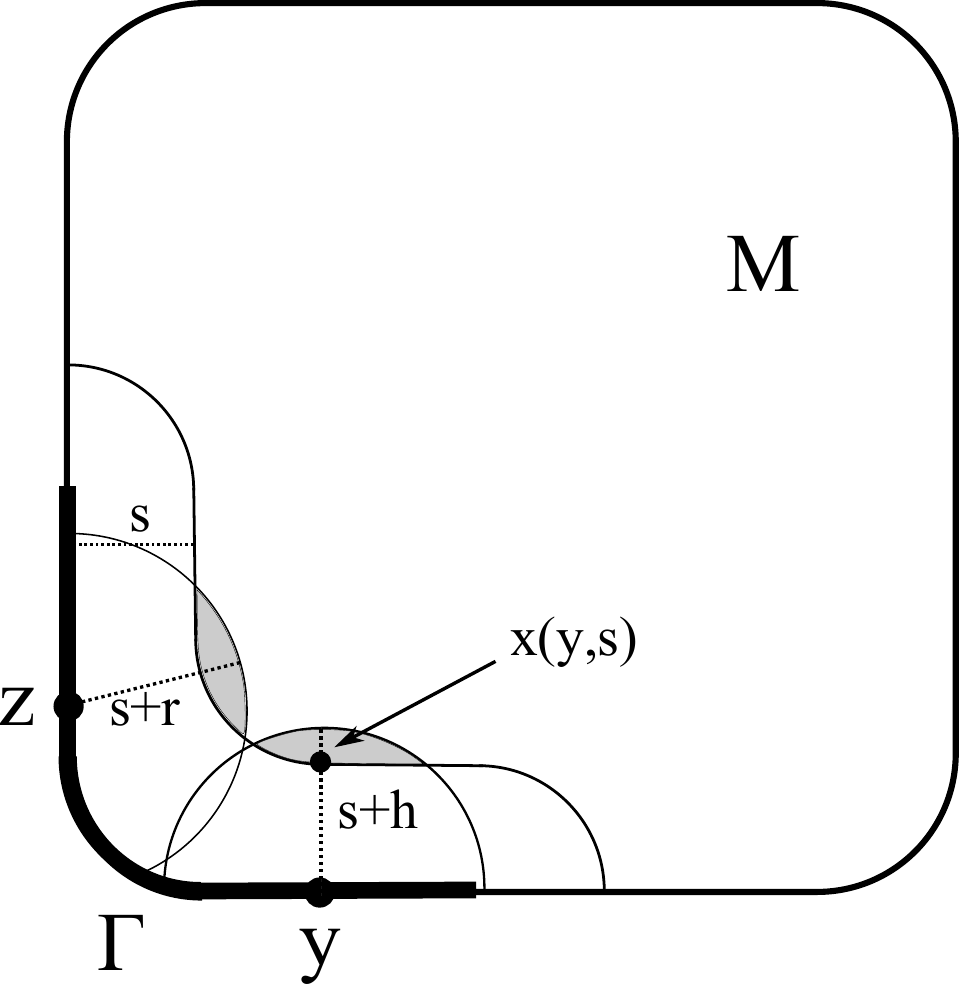}
      }
    }
    {
      \caption{ The light gray regions indicate the wave caps used in
        Lemma \ref{lemma:VolumeComparison} and the dark gray region
        indicates the overlap between the caps.  }
      \label{fig:WavecapIntersections}
    }
   \end{floatrow}
\end{figure}

The next lemma demonstrates that when $s < \sigma_\Gamma(y)$, the
wave caps $\wavecap_\Gamma(y,s,h)$ tend, in a set-theoretic sense,
towards $x(y,s)$. 

\begin{lemma}
  \label{lemma:wavecap_intersections}
  Let $y \in \Gamma$ and $s \in (0, \sigma_{\Gamma}(y))$.  Then, 
  \begin{align}
    \label{eq:wavecap_intersections}
    \bigcap_{h > 0} \wavecap_\Gamma(y,s,h) = \{x(y,s)\}.
  \end{align}
\end{lemma}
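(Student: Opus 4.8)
The plan is to first rewrite each wave cap purely in terms of distance functions, then pass to the intersection, and finally identify the resulting set with the single point $x(y,s)$ by a first-variation argument. The reduction step is the natural starting point. Since $M^\circ(\Gamma,s) = \{x : d(x,\Gamma) < s\}$ contains all of $\bar\Gamma$ whenever $s>0$, subtracting it from $M(y,s+h)$ deletes the spurious boundary points that the domain-of-influence definition attaches to $M(y,s+h)$ (those arising from $z \ne y$ with $\tau(z)=0$), leaving
\begin{equation*}
  \wavecap_\Gamma(y,s,h) = \{ x \in M : d(x,y) \le s+h \text{ and } d(x,\Gamma) \ge s \}.
\end{equation*}
The second condition does not involve $h$, and $\bigcap_{h>0}\{x : d(x,y)\le s+h\} = \{x : d(x,y)\le s\}$, so
\begin{equation*}
  \bigcap_{h>0} \wavecap_\Gamma(y,s,h) = \{x \in M : d(x,y) \le s,\ d(x,\Gamma) \ge s\}.
\end{equation*}
It then remains to show this set equals $\{x(y,s)\}$.

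For the inclusion $\supseteq$ I would invoke $s<\sigma_\Gamma(y)$. Using that $t\mapsto d(\gamma(t;y,\nu),\Gamma)$ is $1$-Lipschitz and satisfies $d(\gamma(t;y,\nu),\Gamma)\le t$, together with the defining maximality of $\sigma_\Gamma(y)$, one gets $d(x(y,s),\Gamma)=s$; and since $y\in\Gamma$, the length-$s$ normal geodesic segment gives $d(x(y,s),y)\le s$. Hence $x(y,s)$ meets both constraints and lies in the intersection.

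The reverse inclusion is the main obstacle. Given $x$ with $d(x,y)\le s$ and $d(x,\Gamma)\ge s$, I would first note $d(x,\Gamma)\le d(x,y)\le s$ (because $y\in\Gamma$), which forces $d(x,y)=d(x,\Gamma)=s$; thus $y$ realizes the distance from $x$ to $\partial M$, and since $\Gamma$ is open in $\partial M$ the point $y$ is a \emph{local} minimizer of $z\mapsto d(x,z)$ on $\partial M$. A minimizing unit-speed geodesic $\eta$ from $y$ to $x$ therefore has length $s$ and, by the first variation of arc length, meets $\partial M$ orthogonally at $y$ and points inward, i.e. $\dot\eta(0)=\nu$. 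Consequently $\eta(t)=\gamma(t;y,\nu)$ and $x=\eta(s)=x(y,s)$, completing the argument. The delicate points to verify carefully are the orthogonality step (that the geodesic to a nearest boundary point is normal, which needs $y$ in the relative interior of $\Gamma$ so that variations in every boundary direction are admissible) and the bookkeeping in the reduction step showing that the boundary points adjoined to $M(y,s+h)$ are exactly those removed by $M^\circ(\Gamma,s)$.
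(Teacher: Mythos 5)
Your proof is correct and follows essentially the same route as the paper's: both reduce $\bigcap_{h>0}\wavecap_\Gamma(y,s,h)$ to the set $\{w : d(y,w)=d(\Gamma,w)=s\}$ and then identify this set with $\{x(y,s)\}$ by showing that a minimizing path from $y$ to such a $w$ must leave $\Gamma$ normally (using the openness of $\Gamma$) and hence coincide with $\gamma(\cdot\,;y,\nu)$. The only point you elide is the existence and $C^1$ regularity of the minimizer on a manifold with boundary --- the paper supplies this via the Hopf--Rinow theorem for length spaces and a citation to Alexander et al. --- together with the explicit observation that $s<\sigma_\Gamma(y)\le\tau_M(y,\nu)$ keeps the path away from $\partial M$, so that it is a genuine geodesic up to time $s$.
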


\begin{proof}
Let $y,s$ as above, and let $I(y,s)$ denote the left hand side of
(\ref{eq:wavecap_intersections}). Let $w$ be any point belonging to
$I(y,s)$. Then $w \in \wavecap_\Gamma(y,s,h)$ for all $h > 0$, so $s
\leq d(\Gamma,w)$ and $d(y,w) < s + h$ for all $h >0$, thus $d(y,w)
\leq s$. Since $y \in \Gamma$, we conclude $s = d(\Gamma,w) =
d(y,w)$. On the other hand, if $w$ is a point in $M$ satisfying $s =
d(\Gamma,w) = d(y,w)$, then $w \in \wavecap_\Gamma(y,s,h)$ for any $h
>0$, hence $w \in I(y,s)$. We conclude,
\begin{equation}
  I(y,s) = \{ w \in M : d(y,w) = d(\Gamma,w) = s\}.
\end{equation}
Because $s < \sigma_\Gamma(y)$, we have $d(x(y,s),y) =
d(x(y,s),\Gamma) = s$, so $x(y,s) \in I(y,s)$. It remains to show that
no other points belong to $I(y,s)$.

Let $w$ belong to $I(y,s)$, we will show that $w = x(y,s)$. If we knew
for certain that $w$ belonged to the image of the semi-geodesic
coordinates, then this would be immediate from the definition of
these coordinates. On the other hand, if we did not require $\Gamma$
to be open, then simple examples show that for points $y$ in the
topological boundary of $\Gamma$ it is possible that $I(y,s)$ has many
points. We demonstrate that when $\Gamma$ is open this cannot
happen.

Since $M$ is a compact connected metric space with distance arising
from a length function, the Hopf-Rinow theorem for length spaces
applies and we conclude that there is a minimizing path $\beta : [0,l]
\rightarrow M$ from $y$ to $w$. By \cite{Alexander1981}, $\beta$ is
$C^1$ and we may assume that it is unit speed parameterized. Hence $l
= s$. As $\beta$ is minimizing from both $y$ and $\Gamma$ to $w$, we
see that $\dot{\beta}(0) = \nu$. Thus $\beta$ coincides with $x(y,t)$
for $t \leq \min(s, \tau_M(y,\nu))$. But $s < \sigma_\Gamma(y)$, hence
$s < \tau_M(y,\nu)$. Thus we see that $w = \beta(s) = x(y,s)$. \qquad
\end{proof}

We use the preceding lemma to show that, when $h$ is small, the
distance between a point $z \in \Gamma$ and the wave cap
$\wavecap_\Gamma(y,s,h)$ surrounding $x(y,s)$ yields an approximation
to $d(z,x(y,s))$. We depict this approximation in Figure
\ref{figure:capDistanceApprox}.

\begin{figure}[!ht]
  \includegraphics[height =1.4in]{./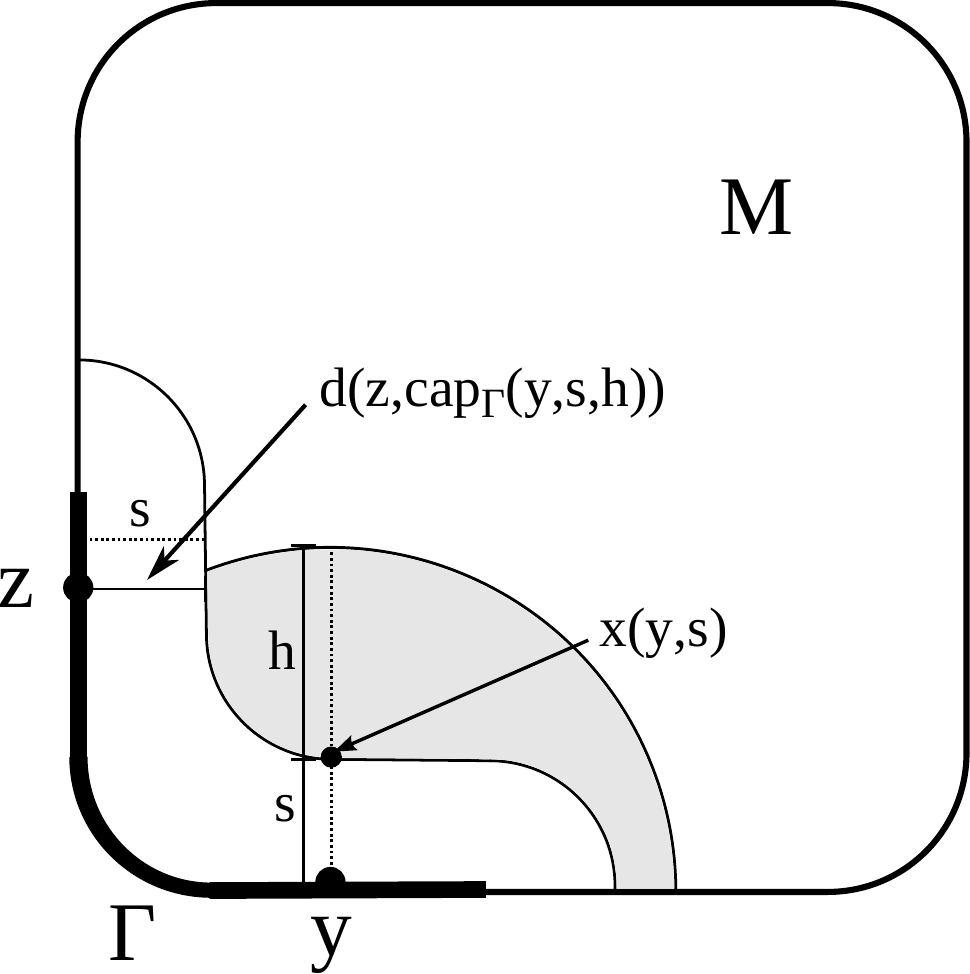}
  \hspace*{.2in}
  \includegraphics[height =1.4in]{./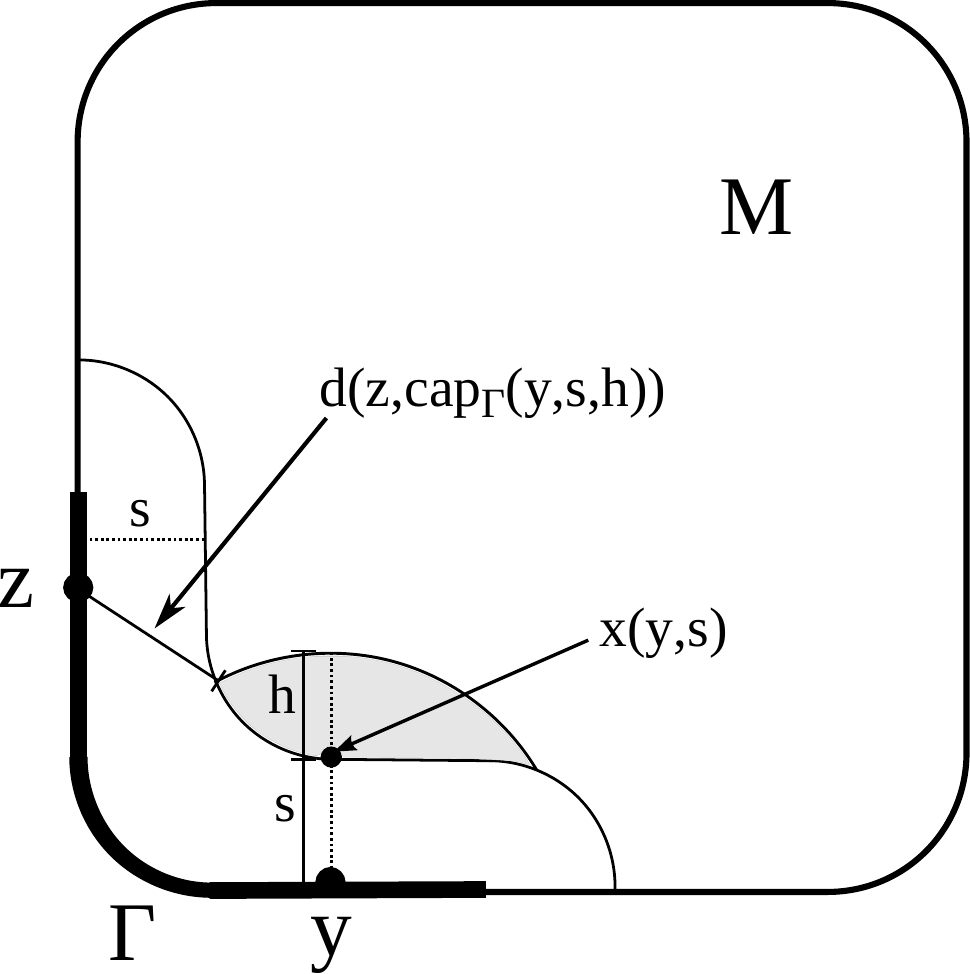}
  \hspace*{.2in}
  \includegraphics[height =1.4in]{./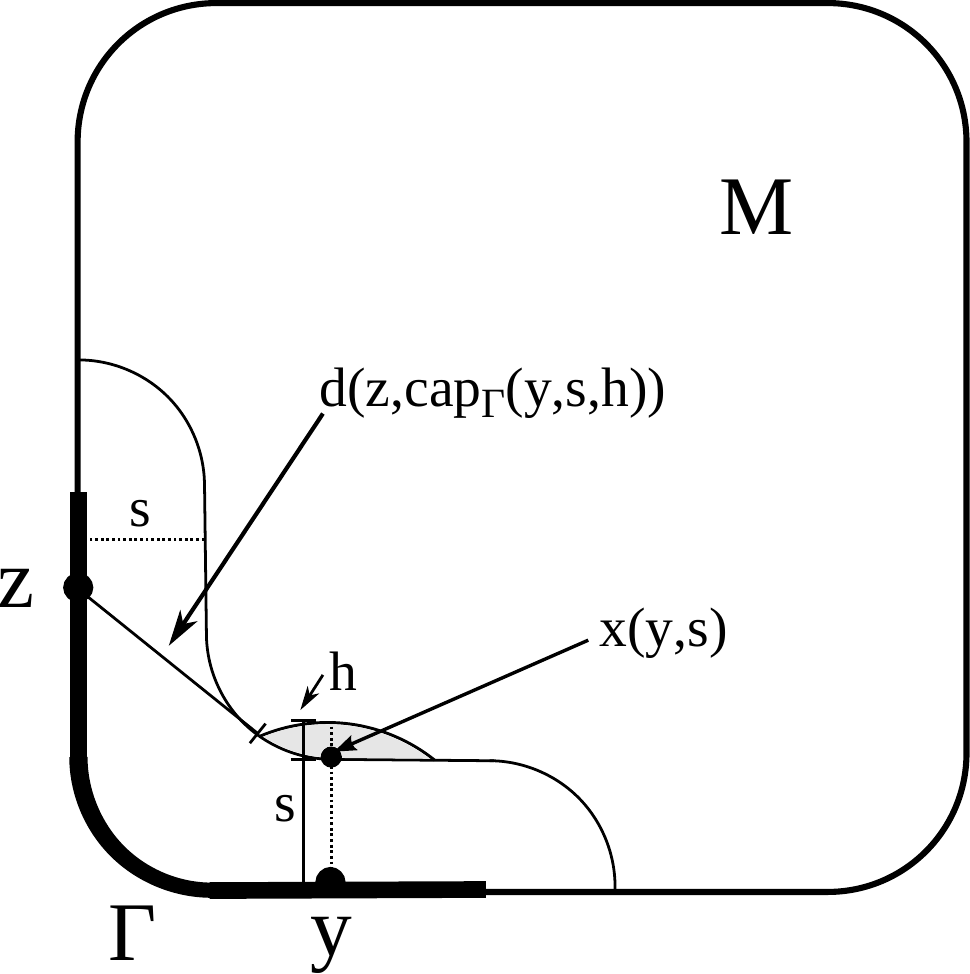}
  \caption{Cartoon demonstrating that $d(z,\wavecap_\Gamma(y,s,h))
    \rightarrow d(z,x(y,s))$ as $h \rightarrow 0$.}
  \label{figure:capDistanceApprox}
\end{figure}

\begin{lemma}
  \label{lemma:CapDistanceApprox}
  For $y,z \in \Gamma$, and $s < \sigma_\Gamma(y)$,
  $d(z,\wavecap_\Gamma(y,s,h)) \rightarrow d(z,x(y,s))$ as $h \rightarrow 0$.
\end{lemma}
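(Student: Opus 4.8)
The plan is to sandwich the limit between $d(z,x(y,s))$ from above and below, exploiting the set-theoretic convergence of the wave caps to $\{x(y,s)\}$ established in Lemma~\ref{lemma:wavecap_intersections}.

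First I would establish the upper bound. Since $x(y,s)\in\wavecap_\Gamma(y,s,h)$ for every $h\in(0,\sigma_\Gamma(y)-s)$ (as noted just after the definition of the wave cap, and re-derived in the proof of Lemma~\ref{lemma:wavecap_intersections}), the definition $d(z,A)=\inf_{w\in A}d(z,w)$ immediately gives $d(z,\wavecap_\Gamma(y,s,h))\le d(z,x(y,s))$ for all such $h$. Hence $\limsup_{h\to 0}d(z,\wavecap_\Gamma(y,s,h))\le d(z,x(y,s))$. Next I would observe that the family of wave caps is nested and decreasing as $h\downarrow 0$: because $M(y,s+h)$ shrinks as $h$ decreases while $M^\circ(\Gamma,s)$ is fixed, we have $\wavecap_\Gamma(y,s,h')\subset\wavecap_\Gamma(y,s,h)$ whenever $h'<h$. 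Consequently $h\mapsto d(z,\wavecap_\Gamma(y,s,h))$ is monotone, so the limit as $h\to 0$ exists; call it $L$, and by the preceding inequality $L\le d(z,x(y,s))$.

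For the lower bound I would use compactness. Each cap $\wavecap_\Gamma(y,s,h)=M(y,s+h)\setminus M^\circ(\Gamma,s)$ is closed (a closed set minus an open set) and hence compact in the compact manifold $M$, so the infimum defining $d(z,\wavecap_\Gamma(y,s,h))$ is attained at some $w_h\in\wavecap_\Gamma(y,s,h)$. Choosing a sequence $h_k\downarrow 0$ and passing to a convergent subsequence $w_{h_k}\to w_\ast$, the nesting shows that $w_{h_k}\in\wavecap_\Gamma(y,s,h_0)$ for all large $k$ once $h_k<h_0$; since each cap is closed, this forces $w_\ast\in\wavecap_\Gamma(y,s,h_0)$ for every $h_0>0$, i.e.\ $w_\ast\in\bigcap_{h>0}\wavecap_\Gamma(y,s,h)$. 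By Lemma~\ref{lemma:wavecap_intersections} this intersection is exactly $\{x(y,s)\}$, so $w_\ast=x(y,s)$. Continuity of the distance then yields $L=\lim_k d(z,w_{h_k})=d(z,w_\ast)=d(z,x(y,s))$, which, combined with the upper bound, proves the claim.

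The only genuinely delicate step is the lower bound: one must rule out the possibility that the minimizers $w_h$ stay a positive distance away from $x(y,s)$ inside the (shrinking) caps, and this is precisely where compactness of $M$ and the identification of the intersection from Lemma~\ref{lemma:wavecap_intersections} do the work. Everything else---the upper bound from membership of $x(y,s)$ in each cap, and the existence of the limit from monotonicity---is routine.
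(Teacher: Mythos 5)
Your proposal is correct and follows essentially the same route as the paper's proof: extract minimizers $w_h$ on the compact caps, use nesting plus Lemma~\ref{lemma:wavecap_intersections} to identify the subsequential limit as $x(y,s)$, and use monotonicity of $h\mapsto d(z,\wavecap_\Gamma(y,s,h))$ to upgrade the subsequential convergence to the full limit. The explicit upper bound from $x(y,s)\in\wavecap_\Gamma(y,s,h)$ is a harmless (slightly redundant) addition; otherwise the arguments coincide.
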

\begin{proof}
  Let $\{h_j\} \subset \R_+$ be a sequence for which $h_j \downarrow
  0$. Then each $\wavecap_\Gamma(y,s,h_j)$ is compact, so there exists
  $w_j \in \wavecap_\Gamma(y,s,h_j)$ such that $d(z,w_j) =
  d(z,\wavecap_\Gamma(y,s,h_j))$. Because $M$ is a compact manifold,
  the sequence $\{w_j\}$ has a convergent subsequence $\{w_{j_k}\}$
  converging to a point $w$. Since the wave caps
  $\wavecap_\Gamma(y,s,h)$ nest, the tail of $\{w_{j_k}\}$ belongs to
  the closed set $\wavecap_\Gamma(y,s,h_{j_k})$ for each $j_k$, hence
  $w \in \wavecap_\Gamma(y,s,h)$ for each $h > 0$. By the previous
  lemma, we conclude $w = x(y,s)$.
  
  Together, continuity of the distance function and the particular
  choice of the $w_{j_k}$ imply that, $ d(z,\wavecap_\Gamma(y,s,h_{j_k}))
  = d(z,w_{j_k}) \rightarrow d(z,x(y,s)).$  Since the wave caps nest,
  the sequence $\{d(z,w_j)\}$ is monotone non-decreasing, and since it
  is bounded above it has a limit. In particular, any subsequential
  limit coincides with the limit. Thus we conclude that
  $d(z,\wavecap_\Gamma(y,s,h_j)) \rightarrow d(z,x(y,s))$ as $j
  \rightarrow \infty$, and in turn, $d(z,\wavecap_\Gamma(y,s,h))
  \rightarrow d(z,x(y,s))$ as $h \rightarrow 0$. \qquad
\end{proof}

The volumes appearing in Lemma \ref{lemma:VolumeComparison} cannot be
computed directly with the $\tau$'s appearing in the regularized
volume determination. That is, the lemma requires us to compute
volumes such as $m(s1_\Gamma + r1_z + h1_y)$, but the function $\tau =
s1_\Gamma + r1_z + h1_y$ is equivalent to $s1_{\Gamma}$ in
$L^2((0,T)\times \Gamma)$. As a result, the set $L^2(S_\tau)$ will
produce waves that fill $L^2(M(s1_{\Gamma}))$ as opposed to the
desired set $L^2(M(\tau))$. The problem is that the spikes $h1_y$ and
$r1_z$ have supports with $\dS$ measure zero. The remedy is to replace
the spikes by functions that produce the same domains of influence but
have better supports. To accomplish this, for $y \in \Gamma$ and $R
\in [0,\infty)$, we define $\tau_y^R$ on $\Gamma$ by:
\begin{equation}
  \tau_y^R(z) := R - d(z,y) \text{\quad for $z \in \Gamma$}.
\end{equation} 
Note that $\tau_y^R$ is continuous.  We recall that under Assumption
\ref{assuption:distances} the distances $d(y,z)$ for $y,z \in \Gamma$
with $d(y,z) < T$ are known (or, alternatively, that they have been
computed in some other fashion from $\Lambda_\Gamma^{2T}$). Thus under
our assumptions the functions $\tau_y^R$ are known.

\begin{lemma}
\label{lemma:SpikeReplacements}
Let $y,z \in \Gamma$, $s,r,h > 0$. We will use the notation $f \vee g$
to denote the function obtained by taking the pointwise maximum of $f$
and $g$. Then, we have the following equalities,
\begin{align} 
    \label{eq:1stEq} M(\tau_y^r) &= M(r1_y),\\ 
    \label{eq:2ndEq} M(\tau_y^{s+h} \vee \tau_z^{s+r} \vee s) &= M(h1_y + r1_z + s1_\Gamma),\\
    \label{eq:3rdEq} M( \tau_z^{s+r} \vee s) &= M(s1_\Gamma + r1_z).
\end{align}
\end{lemma}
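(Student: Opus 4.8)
The plan is to reduce all three identities to two elementary facts about the domain-of-influence operator $\tau \mapsto M(\tau)$: first, that it converts pointwise maxima into unions, and second, that $M(\tau_y^R)$ is just a metric ball about $y$. For the first fact, I would observe directly from the definition of $M(\tau)$ that for any two admissible functions $\alpha,\beta$ on $\bar\Gamma$,
\begin{equation*}
  M(\alpha \vee \beta) = M(\alpha) \cup M(\beta),
\end{equation*}
since ``there exists $z \in \bar\Gamma$ with $d(x,z) \le \max(\alpha(z),\beta(z))$'' is logically the same as ``there exists $z$ with $d(x,z) \le \alpha(z)$, or there exists $z$ with $d(x,z) \le \beta(z)$.'' Iterating this identity breaks the left-hand sides of \eqref{eq:2ndEq} and \eqref{eq:3rdEq} into unions of the pieces $M(\tau_y^{s+h})$, $M(\tau_z^{s+r})$, and $M(s) = M(s1_\Gamma) = M(\Gamma,s)$.

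The heart of the argument is \eqref{eq:1stEq}, where I would use the triangle inequality to show both sides coincide with the closed ball $\overline{B}(y,r) = \{x \in M : d(x,y) \le r\}$ (together with $\bar\Gamma$). For $M(\tau_y^r) \subset \overline{B}(y,r)$, if $x \in M(\tau_y^r)$ is witnessed by some $z \in \bar\Gamma$, then $d(x,z) \le r - d(z,y)$ forces $d(x,y) \le d(x,z) + d(z,y) \le r$. Conversely, if $d(x,y) \le r$, then taking the witness $z = y \in \bar\Gamma$ gives $d(x,y) \le r = \tau_y^r(y)$, so $x \in M(\tau_y^r)$; the same choice $z = y$ shows $x \in M(r1_y)$. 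Reading $M(r1_y)$ off directly, where $r1_y$ equals $r$ at $y$ and $0$ elsewhere, gives $M(r1_y) = \overline{B}(y,r) \cup \bar\Gamma$, matching $M(\tau_y^r)$.

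Finally I would assemble \eqref{eq:2ndEq} and \eqref{eq:3rdEq}: the union property together with \eqref{eq:1stEq} gives, for instance, $M(\tau_z^{s+r} \vee s) = \overline{B}(z,s+r) \cup M(\Gamma,s)$, while reading off $M(s1_\Gamma + r1_z)$ directly (the function equals $s+r$ at $z$ and $s$ elsewhere) yields the same union; \eqref{eq:2ndEq} is identical but carries an extra ball $\overline{B}(y,s+h)$. The one point demanding care --- and the main, if modest, obstacle --- is the boundary set $\bar\Gamma$, which has $\Vol$-measure zero but enters asymmetrically: $M(r1_y)$ contains all of $\bar\Gamma$ because each boundary point is its own zero-distance witness, whereas $M(\tau_y^r)$ recovers $\bar\Gamma$ only once $\tau_y^r$ is read as its nonnegative part $\max(r - d(\cdot,y),0)$, in keeping with the standing convention that functions fed to $M(\cdot)$ are $[0,T]$-valued. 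In \eqref{eq:2ndEq} and \eqref{eq:3rdEq} this subtlety disappears automatically, since the summand $s1_\Gamma$ already forces $\bar\Gamma \subset M(\Gamma,s)$ into both sides.
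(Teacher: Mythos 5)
Your proof is correct and follows essentially the same route as the paper's: the first equality \eqref{eq:1stEq} is established by a two-way triangle-inequality argument, and \eqref{eq:2ndEq}--\eqref{eq:3rdEq} then follow because $M(\cdot)$ converts pointwise maxima into unions, reducing everything to the single-point case. Your extra care about $\bar\Gamma$ and the sign of $\tau_y^r$ addresses a measure-zero bookkeeping issue that the paper's proof quietly sidesteps (it switches to strict inequalities in the membership test for $M(\tau)$), but this does not change the substance of the argument.
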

\begin{proof}
  Let $x \in M(r1_y)$, then $d(y,x) < r$. Since $\tau_y^r(y) = r$, we
  have that $d(y,x) < \tau_y^r(x)$, hence $x \in M(\tau_y^r)$.  Now
  let $x \in M(\tau_y^r)$. Then there is a point $z \in \Gamma$ for
  which $d(x,z) < \tau_y^r(z)$. Applying the definition of $\tau_y^r$,
  we find $r > d(x,z) + d_\Gamma(y,z) \geq d(x,z) + d(y,z) \geq
  d(x,y)$. Hence $x \in M(r1_y)$. We conclude that $M(\tau_y^r) =
  M(r1_y)$.

  We demonstrate equality (\ref{eq:2ndEq}) and note that
  (\ref{eq:3rdEq}) is proved in an analogous fashion. Let $\tau =
  \tau_y^{s+h} \vee \tau_z^{s+r} \vee s$. Then $x \in M(\tau)$ just in
  case $d(x,p) < \tau(p)$ for some $p \in \Gamma$, which happens if
  and only if $d(x,p)$ is less than $\tau_y^{s+h}(p),\tau_z^{r+h}(p)$,
  or $s$. The preceding paragraph implies that this happens just in
  case $x$ belongs to $M((s+h)1_y), M((s+r)1_z),$ or $M(s1_\Gamma)$,
  which happens if and only if $x \in M(s1_y + r1_z + s1_\Gamma)$. \qquad
\end{proof}

We are finally in a position to prove Theorem
\ref{theorem:VolumesDetermineDistances}.

{\em Proof}. [Of Theorem \ref{theorem:VolumesDetermineDistances}]
First, let $r$ and $h$ be positive numbers satisfying $s + r <T$ and
$s + h < T$. Define functions $\tau_1 = s$, $\tau_2 = \tau_y^{s+h}
\vee s$, $\tau_3 = \tau_z^{s+r} \vee s$, and $\tau_4 = \tau_y^{s+h}
\vee \tau_z^{s+r} \vee s$. Using the regularized volume determination
from equation (\ref{eq_reconstruction_vol}), we compute the volumes
$m(\tau_i)$ for $i = 1,\ldots,4$.  Then, Lemma
\ref{lemma:SpikeReplacements} implies that $m(\tau_1) = m(s1_\Gamma)$,
$m(\tau_2) = m(s1_\Gamma + h1_y)$, $m(\tau_3) = m(s1_\Gamma + r1_z)$,
and $m(\tau_4) = m(s1_\Gamma + h1_y + r1_z)$, thus we have determined
the volumes appearing in (\ref{eqn:VolumeComparison}). By Lemma
\ref{lemma:VolumeComparison} we can compute
$d(z,\wavecap_\Gamma(y,s,h))$ by
\begin{equation}
d(z,\wavecap_\Gamma(y,s,h)) = s + \inf \{r : 0 \leq r < T - s, \text{ and
  (\ref{eqn:VolumeComparison}) holds\}}.
\end{equation}  
Finally, by Lemma \ref{lemma:CapDistanceApprox}, we can compute
$d(z,x(y,s))$ by
\begin{equation}
  d(z,x(y,s)) = \lim_{h\rightarrow 0} d(z,\wavecap_\Gamma(y,s,h)). \qquad\endproof
\end{equation}

\subsection{Alternative distance estimation method}

The method to estimate distances derived from Theorem
\ref{theorem:VolumesDetermineDistances} uses the fact that, under the
hypotheses of the theorem, the distance between a point $z \in \Gamma$
and the wave cap $\wavecap_\Gamma(y,s,h)$ serves as an approximation
to $d(z,x(y,s))$, and that this approximation improves as $h
\rightarrow 0$. However, in the case where $g$ is the Euclidean
metric, $d(z,\wavecap_\Gamma(y,s,h))$ converges to $d(z,x(y,s))$ with
the rate $\mathcal{O}(h^{1/2})$. Thus the convergence is typically
slow. In this section, we provide another technique to estimate the
distance to points which we find, for a given nonzero $h$, tends to
provide better distance estimates.

The idea of this alternative distance estimation method is to once
again check for overlap between the sets $\wavecap_\Gamma(y,s,h)$ and
$\wavecap_\Gamma(z,s,r)$, but instead of seeking the minimum $r$ for
which these wave caps overlap, we seek $r$ for which
$\Vol(\wavecap_\Gamma(y,s,h) \cap \wavecap_\Gamma(z,s,r))$ is half of
$\Vol(\wavecap_\Gamma(y,s,h))$.

Before proving that our alternative distance estimation procedure is
valid, we provide a lemma that shows that the diameter of a wave cap
vanishes as the height of the cap goes to zero.

\begin{lemma}
  Let $y,z \in \Gamma$, $s \in (0,\sigma_{\Gamma}(y))$. Then,
  \begin{equation}
    \lim_{h \rightarrow 0} \diam(\wavecap(y,s,h)) = 0.
  \end{equation}
\end{lemma}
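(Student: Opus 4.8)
The plan is to argue by contradiction, combining compactness of $M$ with the nesting of the wave caps and the computation of their intersection in Lemma~\ref{lemma:wavecap_intersections}. First I would note that, for $h$ small enough that $s + h < \sigma_\Gamma(y)$, each wave cap $\wavecap_\Gamma(y,s,h) = M(y,s+h) \setminus M^\circ(\Gamma,s)$ is closed, hence compact in the compact manifold $M$, so its diameter is attained by some pair $p_h, q_h \in \wavecap_\Gamma(y,s,h)$ with $d(p_h,q_h) = \diam(\wavecap_\Gamma(y,s,h))$. I would also record that the caps nest, $\wavecap_\Gamma(y,s,h') \subseteq \wavecap_\Gamma(y,s,h)$ for $h' \le h$, so that $h \mapsto \diam(\wavecap_\Gamma(y,s,h))$ is monotone non-decreasing and bounded below by $0$; hence the limit $L := \lim_{h \to 0^+} \diam(\wavecap_\Gamma(y,s,h))$ exists and equals $\inf_{h>0}\diam(\wavecap_\Gamma(y,s,h))$.

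Next I would assume for contradiction that $L > 0$, pick a sequence $h_j \downarrow 0$, and set $p_j := p_{h_j}$, $q_j := q_{h_j}$, so that $d(p_j,q_j) = \diam(\wavecap_\Gamma(y,s,h_j)) \ge L$. Using compactness of $M$, I would pass to a subsequence so that $p_j \to p$ and $q_j \to q$ for some $p, q \in M$. The crucial step, which I would handle exactly as in the proof of Lemma~\ref{lemma:CapDistanceApprox}, is the tail argument: because the caps are closed and nested, for each fixed $h > 0$ all but finitely many of the $p_j$ lie in $\wavecap_\Gamma(y,s,h)$, so the limit $p$ lies in $\wavecap_\Gamma(y,s,h)$, and likewise $q \in \wavecap_\Gamma(y,s,h)$. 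Thus $p, q \in \bigcap_{h>0}\wavecap_\Gamma(y,s,h)$.

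Finally I would invoke Lemma~\ref{lemma:wavecap_intersections} to identify this intersection as the single point $\{x(y,s)\}$, forcing $p = q = x(y,s)$. Continuity of the distance function then gives $d(p,q) = \lim_j d(p_j,q_j) = L > 0$, contradicting $p = q$. Hence $L = 0$, which is the claim. The only delicate point is the tail argument placing the subsequential limits $p,q$ inside \emph{every} cap $\wavecap_\Gamma(y,s,h)$; since this is precisely the device already used in Lemma~\ref{lemma:CapDistanceApprox}, where both closedness and nesting of the caps are exploited, I expect no genuine obstacle, and the remainder is routine compactness and monotonicity.
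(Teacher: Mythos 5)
Your proof is correct and follows essentially the same route as the paper's: argue by contradiction, extract convergent subsequences by compactness of $M$, place the subsequential limits in $\bigcap_{h>0}\wavecap_\Gamma(y,s,h)$ using the nesting and closedness of the caps, and invoke Lemma~\ref{lemma:wavecap_intersections} to force those limits to equal $x(y,s)$. The only cosmetic difference is that you track a diameter-attaining pair $(p_h,q_h)$ and the monotone limit $L$, whereas the paper works with a single sequence of cap points whose distance to $x(y,s)$ fails to vanish.
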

\begin{proof}
  Suppose the claim were false. Then there exists a sequence of
  positive real numbers $h_i \downarrow 0$ and points $p_i \in
  \wavecap_\Gamma(y,s,h_i)$ such that $d(x(y,s),p_i) \not \rightarrow
  0$.  Since $M$ is compact, the sequence $\{p_i\}$ has a convergent
  subsequence. Relabeling this subsequence by $p_i$ we have that there
  exists $p \in M$ such that $p_i \rightarrow p$. But this implies
  that $d(p,x(y,s)) \neq 0$, hence $p \neq x(y,s)$. On the other hand,
  since $p_i \in \wavecap_\Gamma(y,s,h_i)$ we must have that $p \in
  \bigcap_{h>0} \wavecap_{\Gamma}(y,s,h)$, but this gives a
  contradiction, since by Lemma \ref{lemma:wavecap_intersections} this
  implies that $p = x(y,s)$. \qquad
\end{proof}

We now present our alternative distance estimation method.

\begin{lemma}
  \label{lemma:alternateDistanceProxy}
  Let $y,z \in \Gamma$, $s \in (0,\sigma_{\Gamma}(y))$, and $0 < h <
  \sigma_{\Gamma}(y) - s$. Let $r_h$ be the solution to,
  \begin{equation}
    \label{eqn:alternateDistanceProxy}
    \Vol(\wavecap_\Gamma(y,s,h) \cap \wavecap_\Gamma(z,s,r_h)) = \frac{1}{2}
    \Vol(\wavecap_{\Gamma}(y,s,h)).
  \end{equation}
  Then, for $d_h := s + r_h$, we have that $d_h
  \rightarrow d(z,x(y,s))$ as $h \rightarrow 0$.
\end{lemma}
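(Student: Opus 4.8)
The plan is to read $s+r_h$ as the ``half-volume radius'' of the distance $d(z,\cdot)$ over the small cap $\wavecap_\Gamma(y,s,h)$, and to exploit the fact that this cap collapses onto the single point $x(y,s)$ as $h\to0$. On a cap of vanishing diameter $d(z,\cdot)$ is nearly constant, equal to $d(z,x(y,s))$, so any radius that cuts the cap into two pieces of equal volume is trapped within the vanishing oscillation of $d(z,\cdot)$ across the cap. I would organize this as a set-theoretic reduction, a short existence remark, and a Lipschitz squeeze.

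First I would reduce the half-volume condition to a statement about the single cap $C_h:=\wavecap_\Gamma(y,s,h)$. Since $C_h=M(y,s+h)\setminus M^\circ(\Gamma,s)$ is disjoint from $M^\circ(\Gamma,s)$, intersecting it with $\wavecap_\Gamma(z,s,r)=M(z,s+r)\setminus M^\circ(\Gamma,s)$ removes nothing further, so
\begin{equation*}
  \wavecap_\Gamma(y,s,h)\cap\wavecap_\Gamma(z,s,r)
  = C_h\cap M(z,s+r)
  = \{x\in C_h:\ d(z,x)\leq s+r\}.
\end{equation*}
Writing $V_h:=\Vol(C_h)$, strictly positive as in the proof of Lemma~\ref{lemma:VolumeComparison}, and $F_h(r):=\Vol(\{x\in C_h:\ d(z,x)\leq s+r\})$, the defining relation for $r_h$ reads $F_h(r_h)=\tfrac12 V_h$. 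To see that such an $r_h$ exists, note that $F_h$ is non-decreasing, tends to $V_h$ as $r$ grows and vanishes for $r$ below all the distances $d(z,x)$, $x\in C_h$, and is continuous because the spheres $\{x:\ d(z,x)=c\}$ carry zero $\Vol$-measure (the same fact invoked in Lemma~\ref{lemma:VolumeComparison}, cf.~\cite{Oksanen2011}); the intermediate value theorem then supplies a solution, positive for small $h$ because $d(z,x(y,s))\geq d(\Gamma,x(y,s))=s$. The squeeze below applies to any solution, so uniqueness is not needed.

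The main step is the squeeze. Set $D:=d(z,x(y,s))$ and $\delta_h:=\diam(C_h)$. Because $x(y,s)\in C_h$ and $d(z,\cdot)$ is $1$-Lipschitz, every $x\in C_h$ satisfies $|d(z,x)-D|\leq d(x,x(y,s))\leq\delta_h$, so all values of $d(z,\cdot)$ on $C_h$ lie in $[D-\delta_h,\,D+\delta_h]$. If $s+r_h<D-\delta_h$ then $\{x\in C_h:\ d(z,x)\leq s+r_h\}$ is empty and $F_h(r_h)=0<\tfrac12 V_h$; if $s+r_h\geq D+\delta_h$ then this set is all of $C_h$ and $F_h(r_h)=V_h>\tfrac12 V_h$. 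Both contradict $F_h(r_h)=\tfrac12 V_h$, so $|s+r_h-D|\leq\delta_h$, i.e.\ $|d_h-d(z,x(y,s))|\leq\delta_h$. Since $\delta_h\to0$ by the preceding diameter lemma, I conclude $d_h\to d(z,x(y,s))$ as $h\to0$.

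I expect no essential obstacle once the diameter lemma is in hand. The two points that require care are the reduction identity, which is exactly what turns the half-volume condition into a genuine comparison of volumes inside the one shrinking cap $C_h$, and the continuity of $F_h$, which guarantees that a splitting radius $r_h$ exists; both rest on the zero-measure property of distance spheres. Everything afterward is the one-line Lipschitz squeeze.
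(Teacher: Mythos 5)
Your proof is correct and follows essentially the same squeeze as the paper: the half-volume condition forces the sublevel set $\{x\in C_h:\ d(z,x)\le s+r_h\}$ to be neither empty nor all of $C_h$, trapping $d_h$ in an interval of width $\diam(C_h)$ that collapses by the preceding diameter lemma. The only (harmless) difference is that you center the squeeze at $d(z,x(y,s))$ via the $1$-Lipschitz property of $d(z,\cdot)$ and the fact that $x(y,s)\in C_h$, which lets you bypass Lemma \ref{lemma:CapDistanceApprox}, whereas the paper sandwiches $d_h$ between $d(z,\wavecap_\Gamma(y,s,h))$ and $d(z,\wavecap_\Gamma(y,s,h))+\diam(\wavecap_\Gamma(y,s,h))$ and invokes that lemma; your added remarks on the existence of $r_h$ address a point the paper leaves implicit.
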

\begin{proof}
  First, we recall that for $s$ and $h$ as above,
  $\wavecap_\Gamma(y,s,h)$ will contain a non-empty open set, hence
  the right-hand side of (\ref{eqn:alternateDistanceProxy}) will be
  nonzero. Thus, from the definition of $r_h$, we conclude that
  $\wavecap_\Gamma(y,s,h) \cap \wavecap_\Gamma(z,s,r_h)$ is a
  non-empty and proper subset of $\wavecap_\Gamma(y,s,h)$. Using the
  definition of $\wavecap_\Gamma(z,s,r_h)$ and $r_h$ we conclude that
  $s + r_h \geq d(z,\wavecap_\Gamma(y,s,h))$. On the other hand, since
  the intersection between the wave caps is a proper subset of
  $\wavecap_\Gamma(y,s,h)$ we see that there exists $p \in
  \wavecap_\Gamma(y,s,h) \setminus \wavecap_\Gamma(z,s,r_h)$. In
  particular, this implies that $s + r_h \leq d(z,p) \leq
  \dist(z,\wavecap_\Gamma(y,s,h)) +
  \diam(\wavecap_\Gamma(y,s,h))$. Hence,
  \begin{equation*}
    d(z,\wavecap_\Gamma(y,s,h)) \leq d_h \leq
    d(z,\wavecap_\Gamma(y,s,h)) + \diam(\wavecap_\Gamma(y,s,h)).
  \end{equation*}
  Since $d(z,\wavecap_\Gamma(y,s,h)) \rightarrow d(z,x(y,s))$ and
  $\diam(\wavecap_\Gamma(y,s,h)) \rightarrow 0$ as $h \rightarrow 0$,
  we conclude that $d_h \rightarrow d(z,x(y,s))$ as $h
  \rightarrow 0$. \qquad
\end{proof}

We summarize the steps of the proof in an algorithmic form in Algorithm
\ref{algo:distanceDetermination}.

\label{sec:ContainsAlgorithm}
\begin{algorithm}
  \kwLet{$y,z \in \Gamma$ and $s > 0$ with $r_{x(y,s)}(z) < T.$}

  \kwLet{$h_0 > 0$ small enough that $s + h_0 < \min\{\sigma_\Gamma(y), T\}$.}

  \For{$0 < h < h_0$}{
    \For{$0 < r < T - s$}{
      \kwLet{
          $\tau_1 = s1_\Gamma$, $\tau_2 = \tau_{y}^{s+h}$, $\tau_3 =
          \tau_{z}^{s+r}$, $\tau_4 = \tau_1 \vee \tau_2 \vee \tau_3$
      }
      \For{$\alpha > 0$}{
        \For{$i = 1,\ldots,4$}{
          \kwLet{ $f_{\alpha,i}$ solve:
            \begin{equation*}          
              (K_{\tau_i} + \alpha) P_{\tau_i} f = P_{\tau_i} b
            \end{equation*}
          }
        }
      }
      \For{$i = 1,\ldots,4$}{
        \kwCompute{
        \begin{equation*}
          m(\tau_i) = \lim_{\alpha \rightarrow 0} (f_{\alpha,i}, b)_{L^2(S_\tau; \,dt \otimes dS_{g,\mu})}
        \end{equation*}
        }
      }
      \kwCompute{
        \begin{equation*}
            \begin{array}{rcl}
                m_{\text{target cap}}(h) &:=& m(\tau_2) - m(\tau_1) \\
                m_{\text{overlap}}(h,r) &:=& m(\tau_4) - m(\tau_3) - m(\tau_2) + m(\tau_1)\\
            \end{array}
        \end{equation*}        
      }
    }
    \kwCompute{$r_h$ by either:      
      \begin{quote}
        \textbf{method 1}: let $r_h$ satisfy:
        \begin{equation*}
          r_h = \inf \{r > 0: m_{\text{overlap}}(h,r) > 0\}.
        \end{equation*}
          
        \textbf{method 2}: let $r_h$ be the solution to:
        \begin{equation*}
	  m_{\text{overlap}}(h,r) = \frac{1}{2} m_{\text{target cap}}(h).
        \end{equation*}
      \end{quote}
    }
  }
  \kwCompute{ $r_{x(y,s)}(z)$ by:
    \begin{equation*}
      r_{x(y,s)}(z) = s + \lim_{h \rightarrow 0} r_h.
    \end{equation*}
  }
  \caption{
    \label{algo:distanceDetermination}
    Continuum level description of distance determination algorithm.
  }
\end{algorithm}

\section{Computational experiment}
\label{sec:Numerics}

In this section we present a numerical example that demonstrates the
distance determination procedure that we have described in the
previous sections. For computational simplicity, we demonstrate our
procedure in the Euclidean setting. However, we stress that our method
can be applied in the general Riemannian setting.

\subsection{Numerical method for the direct problem}

For our numerical example, we take the manifold $M$ to be the
$2$-dimensional Euclidean lower half-space equipped with the canonical
metric and a unit weight function, $\mu \equiv 1$. Under these
particular choices, the weighted Laplace-Beltrami operator reduces to
the Euclidean $2$-dimensional Laplacian. Hence, for our example, the
Riemannian wave equation (\ref{eqn:ForwardProb}) simplifies to the
standard $2+1$-dimensional wave equation with constant sound-speed,
$c\equiv 1$.  In order to simulate the situation of partial, local
illumination, for our source/receiver set, $\Gamma$, we take $\Gamma =
[-L,L] \times \{0\} \subset \p M$ with $L = 2.232$. We simulate waves
propagating for $2T$ time units, where $T = 1.249$.

%%%%%%%%%%%%%%%%%%%%%%%%%%%%%%%%%%%%%%%%%%%%%%%%%%%%%%%%%%%%%%%%%
%%  [PAK] WE ROUNDED FROM:
%%  $L = 2.23214$
%%  $T = 1.2489$
%%%%%%%%%%%%%%%%%%%%%%%%%%%%%%%%%%%%%%%%%%%%%%%%%%%%%%%%%%%%%%%%%

For sources, we use a basis of Gaussian pulses with the form
\begin{equation*}
  \varphi_{i,j}(t,x) = C \exp\left(-a_t (t-t_i)^2 -a_x
  (x-x_j)^2\right),
\end{equation*}
with parameters $a_t = a_x = 4 \cdot 10^3$, and where we have selected
the constant $C$ to normalize the $\varphi_{i,j}$.  Sources are
applied on the regular grid:
\begin{equation*}
  \left\{ (t_{s,i}, (x_{s,j},0)) :
  \begin{array}{ll}
     t_{s,i} = t_{s,1} + (i-1) \Delta t_s & i=1,\ldots,N_{t,s}, \\
     x_{s,j} = x_{s,1} + (j-1)\Delta x_s & j = 1,\ldots,N_{x,s}
  \end{array}
  \right\},
\end{equation*}
where the source offset $\Delta x_s$ and time between source
applications $\Delta t_s$ are both selected as $\Delta x_s = \Delta
t_s = .0147$. At each of the $N_{x,s} = 309$ source positions we apply
$N_{t,s} = 78$ sources. For each basis function, we record the
Dirichlet trace data on the regular grid:
%% Old version:
%% \begin{equation*}
%%   \{(t_{r,l}, (x_{r,k},0)) : t_{r,l} = t_{r,1} + (l-1) \Delta t_r,
%%   x_{r,k} = x_{r,1} + (k-1)\Delta x_r ~l=1,\ldots,N_{t,r}, ~k =
%%   1,\ldots,N_{x,r}\}.
%% \end{equation*}
%%
%% Current version:
\begin{equation*}
  \left\{ (t_{r,l}, (x_{r,k},0)) :
  \begin{array}{ll}
      t_{r,l} = t_{r,1} + (l-1) \Delta t_r & l=1,\ldots,N_{t,r}, \\
      x_{r,k} = x_{r,1} + (k-1)\Delta x_r & k = 1,\ldots,N_{x,r}
  \end{array}
  \right\}.
\end{equation*}
The receiver offset $\Delta x_r$ has been taken to be half the source
offset, resulting in $N_{x,r} = 633$ receiver positions.  The time
between receiver measurements, $\Delta t_r$, is $1/10$ the source time
between source applications, resulting in $N_{t,r} = 1701$ receiver
measurements at each receiver position.

%%%%%%%%%%%%%%%%%%%%%%%%%%%%%%%%%%%%%%%%%%%%%%%%%%%%%%%%%%%%%%%%%
%%  [PAK] WE ROUNDED FROM:
%%  $\Delta x_s = \Delta t_s = .014692$
%%%%%%%%%%%%%%%%%%%%%%%%%%%%%%%%%%%%%%%%%%%%%%%%%%%%%%%%%%%%%%%%%

We discretize the Neumann-to-Dirichlet map by solving the forward
problem for each source $\varphi_{i,j}$ and recording its Dirichlet
trace at the receiver positions and times described above. That is, we
compute the following data,
\begin{equation}
  \label{eqn:DiscreteNtD}
  \left\{ \Lambda^{2T}\varphi_{i,j}(t_{r,l},x_{r,k}) =
  u^{\varphi_{i,j}}(t_{r,l},x_{r,k}) :
  \begin{array}{l}
    i=1,\ldots,N_{t,s}, ~j = 1,\ldots,N_{x,s},\\ l=1,\ldots,N_{t,r},
    ~k = 1,\ldots,N_{x,r}
  \end{array}
  \right\}.
\end{equation}

To perform the forward modelling, we use the continuous Galerkin
finite element method with piecewise linear Lagrange polynomial
elements and implicit Newmark time-stepping. In particular, we use the
FEniCS package \cite{LoggMardalEtAl2012a}.  We use a regular
triangular mesh, where the time step and mesh spacing are selected so
that $8$ points per wavelength (in directions parallel to the grid
axes) are used at the frequency $f_0$ where the spectrum of the
temporal portion of the source falls below $10^{-6}$ times its maximum
value.

\subsection{Solving the control problem}
\label{subsec:SolvingTheControlProb}

We discretize the connecting operator $K$ by approximating its action
as an operator on $\linearSpan\{\varphi_{i,j}\}$. That is, we use the
discrete Neumann-to-Dirichlet data, (\ref{eqn:DiscreteNtD}), to
discretize $K_\tau$ by formula (\ref{eq_connecting}), where $\tau
\equiv T$. To be specific, we first compute the Gram matrix $[G]_{ij}
= (\varphi_i,\varphi_j)$ and its inverse $[G^{-1}]$, and then compute
the matrices for the operators $J\Lambda_{\Gamma}^{2T}$,
$R\Lambda_{\Gamma}^{T}$ and $RJ$ acting on
$\linearSpan\{\varphi_{i,j}\}$ as follows:
\begin{align*}
  [J\Lambda_{\Gamma}^{2T}]_{ij} &= \sum_k [G^{-1}]_{ik} (\varphi_k, J \Lambda_{\Gamma}^{2T} \varphi_j),\\
  [R\Lambda_{\Gamma}^T]_{ij} &= \sum_k [G^{-1}]_{ik} (\varphi_k, R \Lambda_{\Gamma}^{T} \varphi_j),\\
  [RJ]_{ij} &= \sum_k [G^{-1}]_{ik} (\varphi_k, RJ \varphi_j).
\end{align*}
Using these operators we can compute the matrix for $K$,
\begin{equation}
  \label{eqn:discreteBlago}
  [K] = [J\Lambda_{\Gamma}^{2T}] - [R \Lambda_{\Gamma}^T] [RJ].
\end{equation}
In Table \ref{table:BlagoPictures} we demonstrate the effect of the
constituent matrices in (\ref{eqn:discreteBlago}) on a source $f$ and
how these combine to form $Kf$.

\begin{table}
  \centering
  \begin{tabular}{rc}
    (a)   &
    \raisebox{-.5\height}{\includegraphics[height = .85in]{./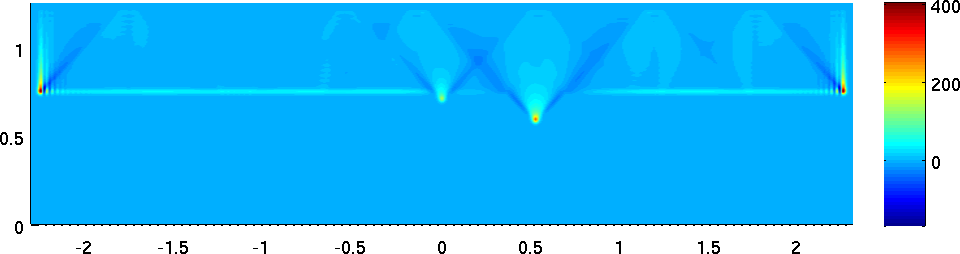}} \\
    (b) & 
    \raisebox{-.5\height}{\includegraphics[height = .85in]{./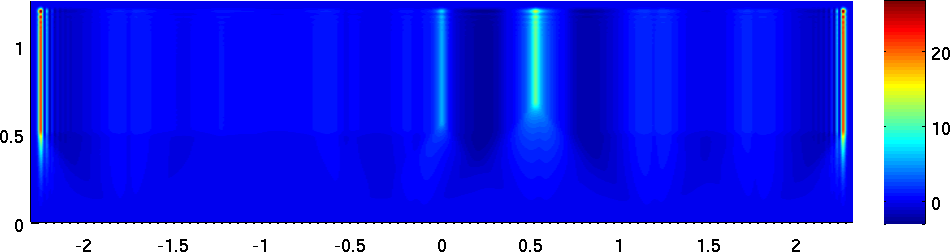}} \\
    (c) & 
    \raisebox{-.5\height}{\includegraphics[height = .85in]{./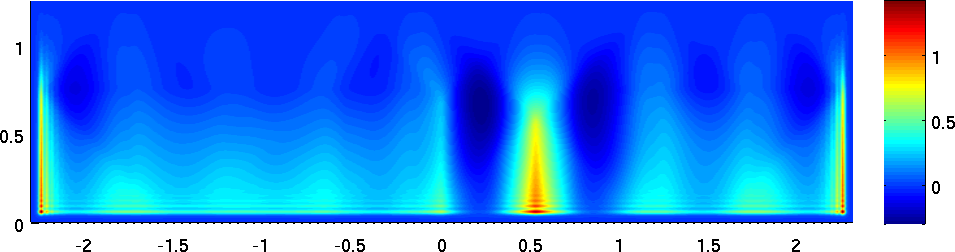}} \\
    (d) & 
    \raisebox{-.5\height}{\includegraphics[height = .85in]{./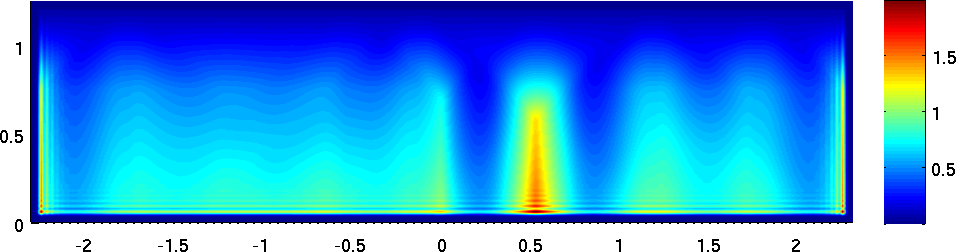}} \\
    (e) & 
    \raisebox{-.5\height}{\includegraphics[height = .85in]{./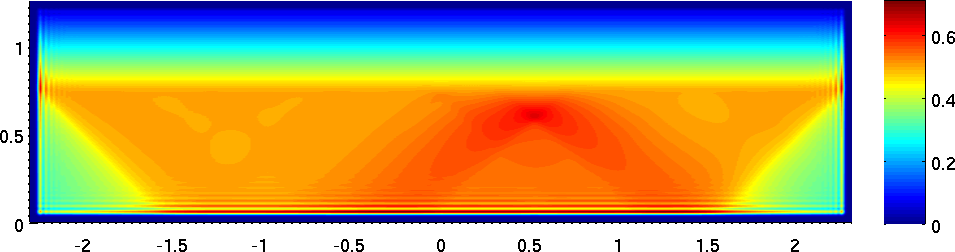}} \\
  \end{tabular}
  \caption{
    \label{table:BlagoPictures} 
    Demonstration of the action of the operators appearing in
    (\ref{eqn:discreteBlago}), (a) a source $f$, (b) $RJf$, (c)
    $R\Lambda_{\Gamma}^T RJf$, (d) $J\Lambda_{\Gamma}^{2T} f$, and (e)
    $Kf$. Note that all plots are on the function level, and each
    function depicted was obtained from coefficients that were
    computed as described in Section~\ref{subsec:SolvingTheControlProb}.}
\end{table}

For $\tau \in C(\Gamma)$, with $0 \leq \tau \leq T$, we obtain the
matrix $[K_\tau]$ discretizing the connecting operator $K_\tau$ by
masking the entries in $[K]$ that correspond to basis functions
$\varphi_{i,j}$ with centers $(t_{s,i},x_{s,j}) \not \in S_\tau$. We
note that, in practice, we find that this tends to provide a better
approximation to $K_\tau$ than computing the matrix $[P_\tau]$ and
computing the product $[P_\tau][K][P_\tau]$.

We consider the discretized control problem
\begin{equation}
  \label{eqn:discreteControlProb}
  ([K_\tau] + \alpha [P_\tau]) [f_\alpha] = [P_\tau] [b],
\end{equation} 
where we use the matrix $[P_\tau]$ to refer to the mask described
above, and use $\alpha = 10^{-5}$. Recall that $b$ is the function
given by $b(t,x) = T - t$, as defined beneath
(\ref{eq_volume_identity}). To solve (\ref{eqn:discreteControlProb})
for $[f_\alpha]$, we use restarted GMRES. In Figure
\ref{fig:controlAndWavefield} and Table \ref{tbl:SourcesAndWavefields}
we depict control solutions $f_\alpha = \sum_{i} [f_\alpha]_i
\varphi_i$ and their associated wavefields $u^{f_\alpha}(T,\cdot)$. A
volume estimate $\hat{m}(\tau)$ for $m(\tau)$ is obtained from
$[f_\alpha]$ by computing the discretized inner product $\hat{m}(\tau)
= [f_\alpha]^T [G] [b]$, which approximates $m(\tau)$ as in
(\ref{eq_reconstruction_vol}). For the remainder of this paper we will
continue to use the notation $\hat{m}(\tau)$ to indicate the
approximation to $m(\tau)$ computed in this fashion.

\begin{figure}[h]
    %% \includegraphics[height = 1.1in]{./} \\
    %% \hspace*{-.08cm}\includegraphics[height = 1.1in]{./}
    \includegraphics[height = 2.0in]{./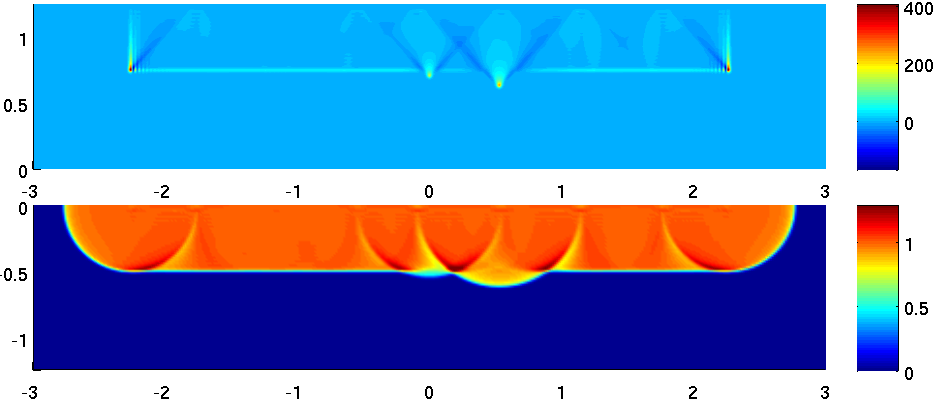}
    \caption{
      \label{fig:controlAndWavefield}
      Illustration of a source $f_\alpha$ and the corresponding
      wavefield $u^{f_\alpha}(T,\cdot)$ for which
      $u^{f_\alpha}(T,\cdot) \approx 1_{M(\tau)}$. Here, $\tau$
      corresponds to $\tau_{4,1}$ from Table
      \ref{tbl:SourcesAndWavefields}. Note that, to show both plots
      with the same horizontal axis, we have extended $f_\alpha$ to
      zero outside of $[0,T] \times \Gamma$.  }
\end{figure}

\subsection{Estimating distances}

\begin{table}%[h!]
  \centering
  \begin{tabular}{rcc}
    \toprule
    $\tau$ & $f_\alpha$ & $u^{f_\alpha}(T,\cdot)$ \\
    \midrule
    $\tau_1$ &
    \raisebox{-.5\height}{\includegraphics[height = .69in]{./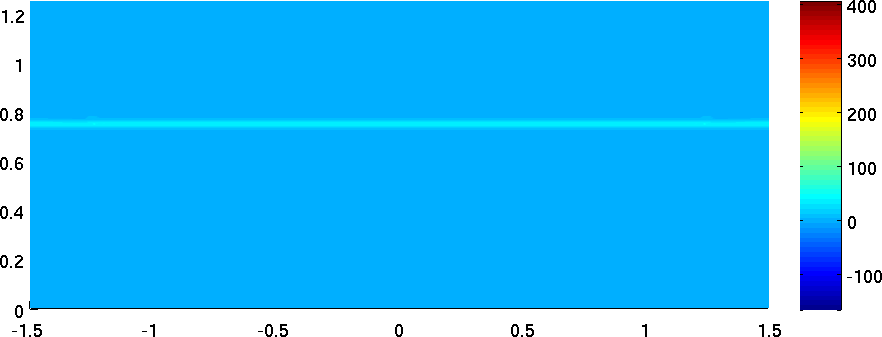}} &
    \raisebox{-.5\height}{\includegraphics[height = .69in]{./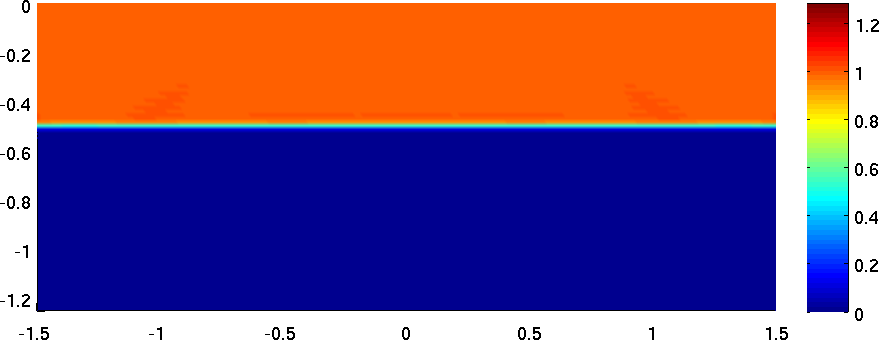}} \\
    $\tau_2$ &
    \raisebox{-.5\height}{\includegraphics[height = .69in]{./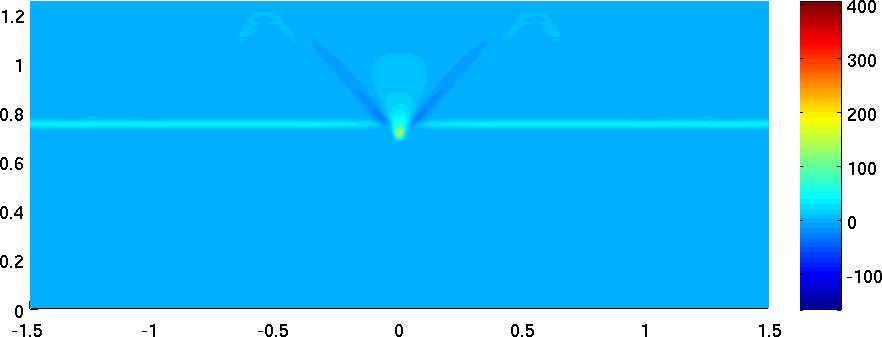}} &
    \raisebox{-.5\height}{\includegraphics[height = .69in]{./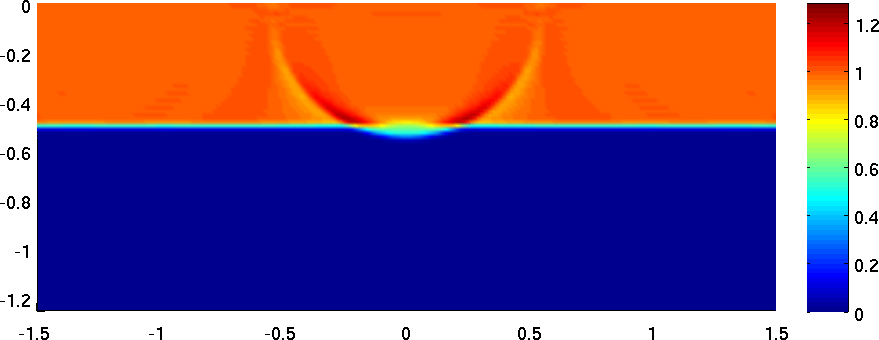}} \\
    \midrule
    %% $\tau_{3,1}$ &
    %% \raisebox{-.5\height}{\includegraphics[height = .69in]{./}} &
    %% \raisebox{-.5\height}{\includegraphics[height = .69in]{./}} \\
    %% $\tau_{4,1}$ &
    %% \raisebox{-.5\height}{\includegraphics[height = .69in]{./}} &
    %% \raisebox{-.5\height}{\includegraphics[height = .69in]{./}} \\
    %% \midrule
    $\tau_{3,1}$ &
    \raisebox{-.5\height}{\includegraphics[height = .69in]{./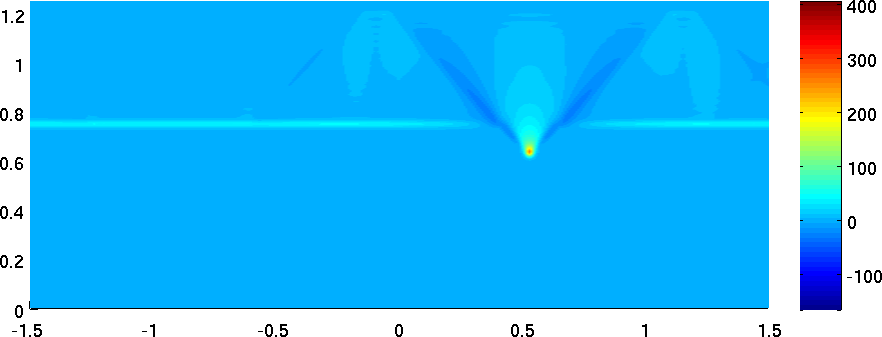}} &
    \raisebox{-.5\height}{\includegraphics[height = .69in]{./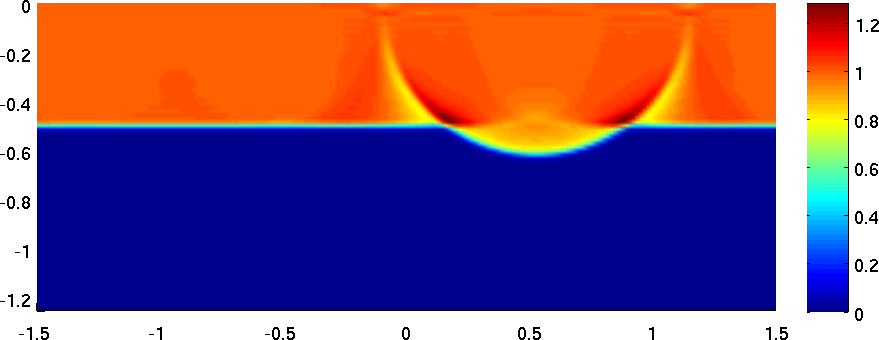}} \\
    $\tau_{4,1}$ &
    \raisebox{-.5\height}{\includegraphics[height = .69in]{./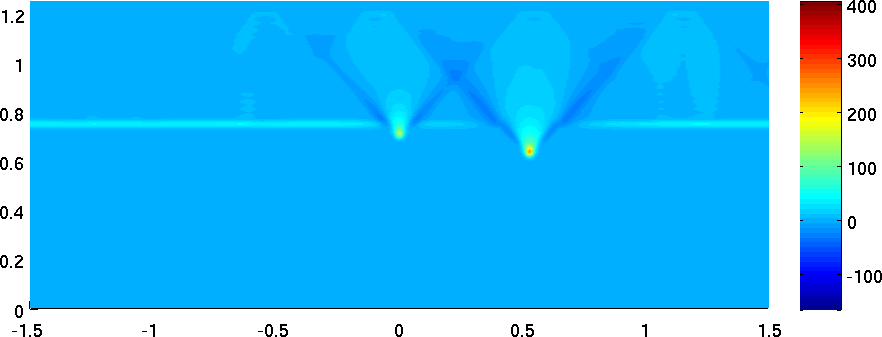}} &
    \raisebox{-.5\height}{\includegraphics[height = .69in]{./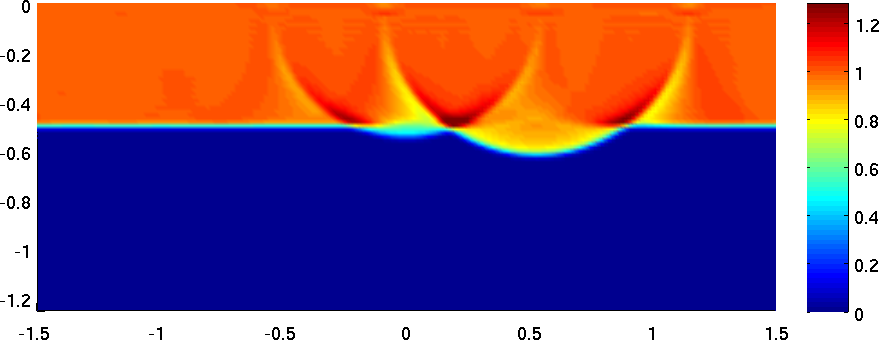}} \\
    \midrule
    $\tau_{3,2}$ &
    \raisebox{-.5\height}{\includegraphics[height = .69in]{./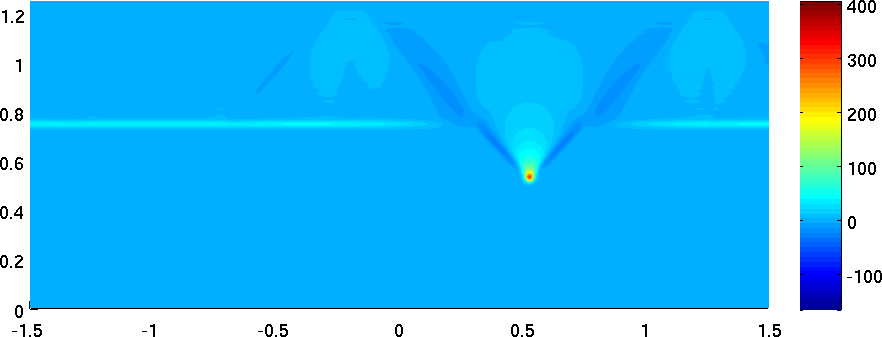}} &
    \raisebox{-.5\height}{\includegraphics[height = .69in]{./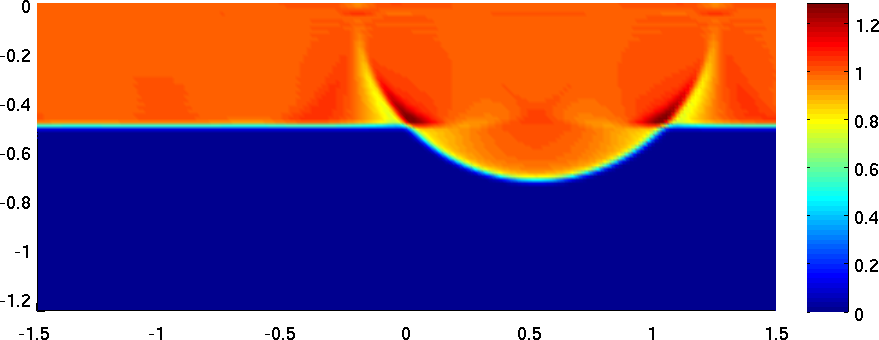}} \\
    $\tau_{4,2}$ &
    \raisebox{-.5\height}{\includegraphics[height = .69in]{./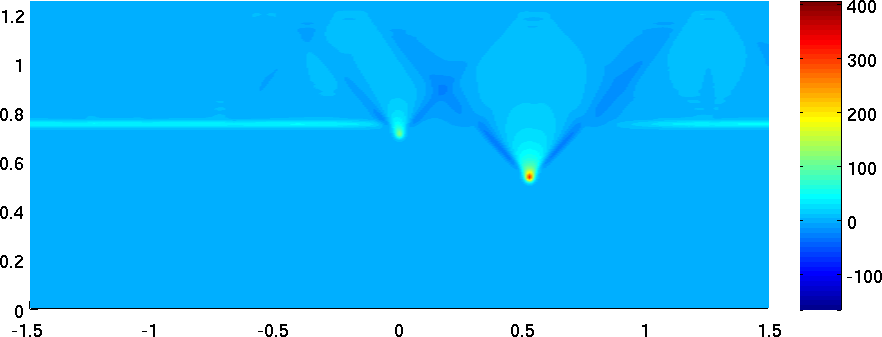}} &
    \raisebox{-.5\height}{\includegraphics[height = .69in]{./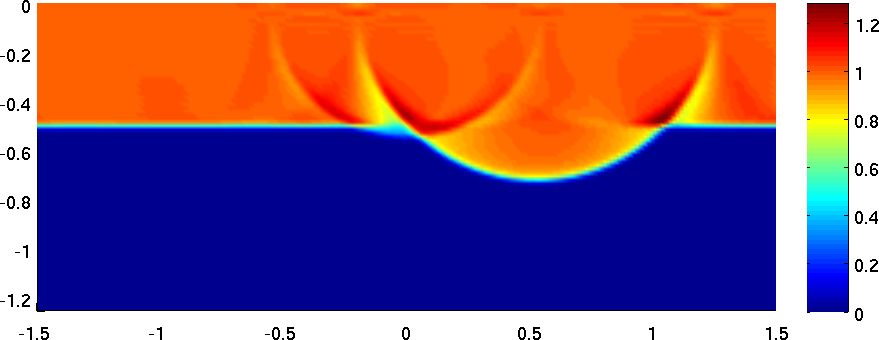}} \\
    \midrule
    $\tau_{3,3}$ &
    \raisebox{-.5\height}{\includegraphics[height = .69in]{./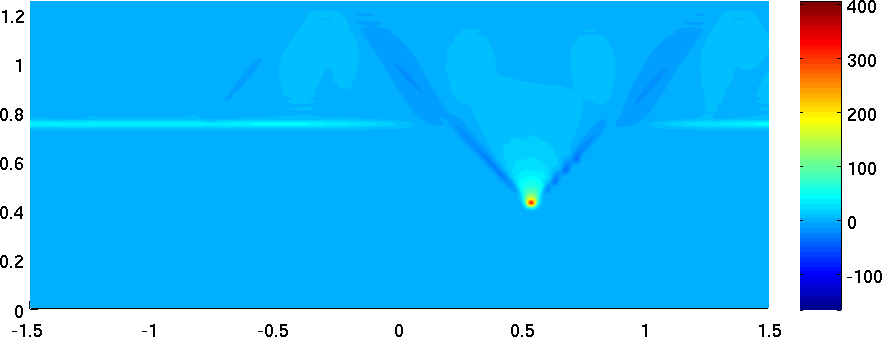}} &
    \raisebox{-.5\height}{\includegraphics[height = .69in]{./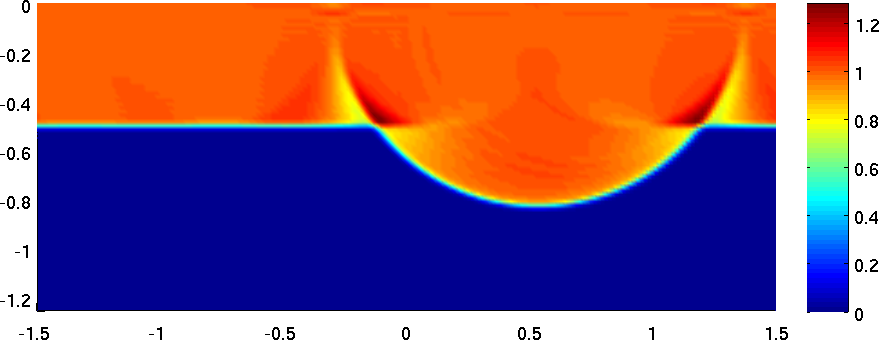}} \\
    $\tau_{4,3}$ &
    \raisebox{-.5\height}{\includegraphics[height = .69in]{./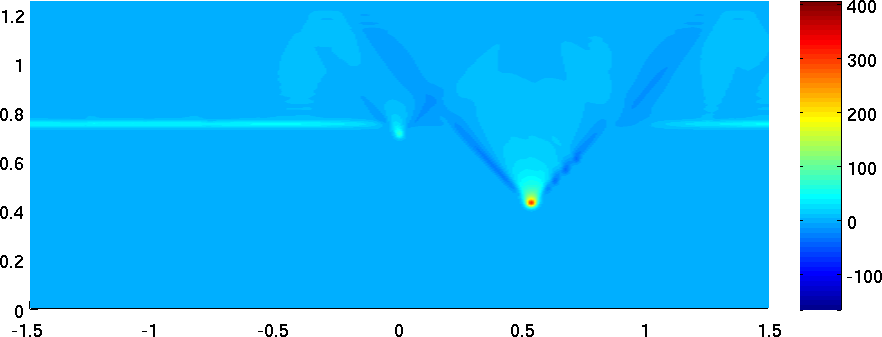}} &
    \raisebox{-.5\height}{\includegraphics[height = .69in]{./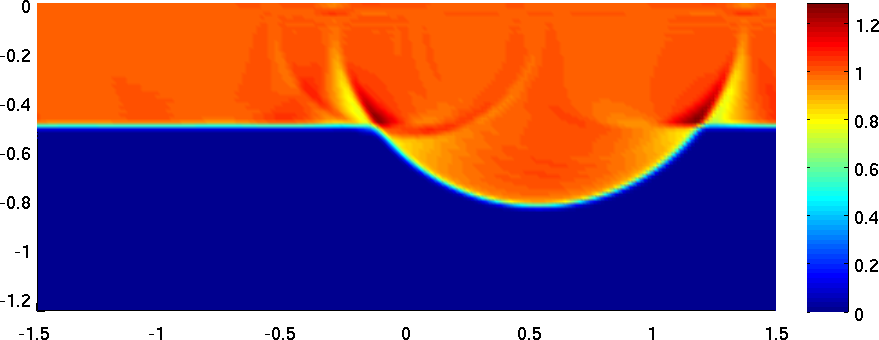}} \\
    \bottomrule
   \end{tabular}
  \caption{
    \label{tbl:SourcesAndWavefields}
    Illustration of the essential features of sources and waves used
    in distance estimation procedure. Solutions, $f_\alpha$, to the
    discretized control problem plotted next to their associated
    wavefields $u^{f_\alpha}(T,\cdot)$ approximating
    $1_{M(\tau)}$. Plotted for $\tau$ of the form $\tau_1 =
    s1_\Gamma$, $\tau_2 = \tau_{y}^{s+h}$, $\tau_{3,j} =
    \tau_{z}^{s+r_j}$, and $\tau_{4,j} = \tau_1 \vee \tau_2 \vee
    \tau_{3,j}$, where $y = (0,0)$, $z = (0.529,0)$, $h = .05$, and
    $r_j = 0.118, 0.235, 0.338$.}
  %%%%%%%%%%%%%%%%%%%%%%%%%%%%%%%%%%%%%%%%%%%%%%%%%%%%%%%%%%%%%%%%%
  %%  [PAK] WE ROUNDED FROM:
  %%  $z = (0.52893,0)$, $h = .05$
  %%  $r_j = 0.014692, 0.117536, 0.235072, 0.337916$
  %%%%%%%%%%%%%%%%%%%%%%%%%%%%%%%%%%%%%%%%%%%%%%%%%%%%%%%%%%%%%%%%%
\end{table}

We estimate distances between $z \in \Gamma$ and points of the form
$x(y,s)$ where $y = (0,0)$. In particular, for each fixed $s$ we
estimate the distances $d(x(y,s),(z_i,0))$ for uniformly spaced
$(z_i,0) \in [-1.175,1.175] \times \{0\} \subset \Gamma$. We take the
offset $\Delta z$ between the points $z_i$ equal to $\Delta z = 4
\Delta x_s = .0588$, and select the points $(z_i,0)$ to coincide with
every fourth source position.  As a proxy for estimating the distance
to $x(y,s)$, we use a target wave cap of the form
$\wavecap_\Gamma(y,s,h)$ with height $h = .025$, and estimate the
distances $r_{x(y,s)}((z_i,0))$ for $s = .125, .25, .375, .5$.

%%%%%%%%%%%%%%%%%%%%%%%%%%%%%%%%%%%%%%%%%%%%%%%%%%%%%%%%%%%%%%%%%
%%  [PAK] WE ROUNDED FROM:
%%  $(z_i,0) \in [-1.17539,1.17539] \times \{0\} \subset \Gamma$
%%  $\Delta z = 4 \Delta x_s = .05877$
%%%%%%%%%%%%%%%%%%%%%%%%%%%%%%%%%%%%%%%%%%%%%%%%%%%%%%%%%%%%%%%%%

For each $s$ we solve the discrete control problem
(\ref{eqn:discreteControlProb}) in order to obtain estimates
$\hat{m}(s1_\Gamma)$ and $\hat{m}(\tau_{y}^{s+h})$ for the respective
volumes of $M(\Gamma,s)$ and $M(y,s+h)$. From these, we estimate the
volume of the target cap by,
\begin{equation*}
  \hat{m}_{\text{target cap}} = \hat{m}(\tau_y^{s+h}) - \hat{m}(s1_\Gamma).
\end{equation*}
For each point $(z_i,0)$, we also solve control problems to obtain
volume estimates $\hat{m}(\tau_{(z_i,0)}^{r_j + s})$ and
$\hat{m}(\tau_{(z_i,0)}^{r_j + s} \vee \tau_{y}^{s + h} \vee s
1_\Gamma)$, where we select the parameters $r_j, j = 1,\ldots, N_r$ so
that the two sets $\{s + r_j : j = 1,\ldots,N_r\} = \{t_{s,k} :
t_{s,k} > r \}$ coincide. We implement the distance estimation
procedure described in Lemma \ref{lemma:alternateDistanceProxy} to
estimate $r_{x(y,s)}((z_i,0))$ as follows: for each $r_j$ we estimate
the volume of $\wavecap_\Gamma(y,s,h) \cap
\wavecap_\Gamma((z_i,0),s,r_j)$ by first computing,
\begin{equation*}
  \hat{m}_{\text{overlap},j} = \hat{m}(\tau_{(z_i,0)}^{r_j + s} \vee \tau_{y}^{s +
    h} \vee s 1_\Gamma) - \hat{m}(\tau_{(z_i,0)}^{r_j + s}) -
  \hat{m}(\tau_{y}^{h + s}) + \hat{m}(s 1_\Gamma).
\end{equation*}
We then find the indices $j,j+1$ for which
\begin{equation}
  \label{eqn:rEstimateVolCriterion}
  \hat{m}_{\text{overlap},j} \leq \frac{1}{2}\hat{m}_{\text{target cap}} \leq \hat{m}_{\text{overlap},j+1},
\end{equation}
and estimate $r_h$ by linearly interpolating between $r_j$ and
$r_{j+1}$. This procedure approximates
(\ref{eqn:alternateDistanceProxy}). We depict the results of the
volume overlap estimation in Table \ref{tbl:relOverlapVol}. Since the
volumes in these images have all been normalized by the target cap
volumes, computing $r_h$ by (\ref{eqn:rEstimateVolCriterion})
corresponds to finding the $x$-value where the curve connecting the
data points passes through the line $y = 0.5$. We depict the distance
estimation results in Figure \ref{img:TestResults}.

\begin{table} 
  \begin{tabular}{c c c}
    \includegraphics[height = 1.4in]{./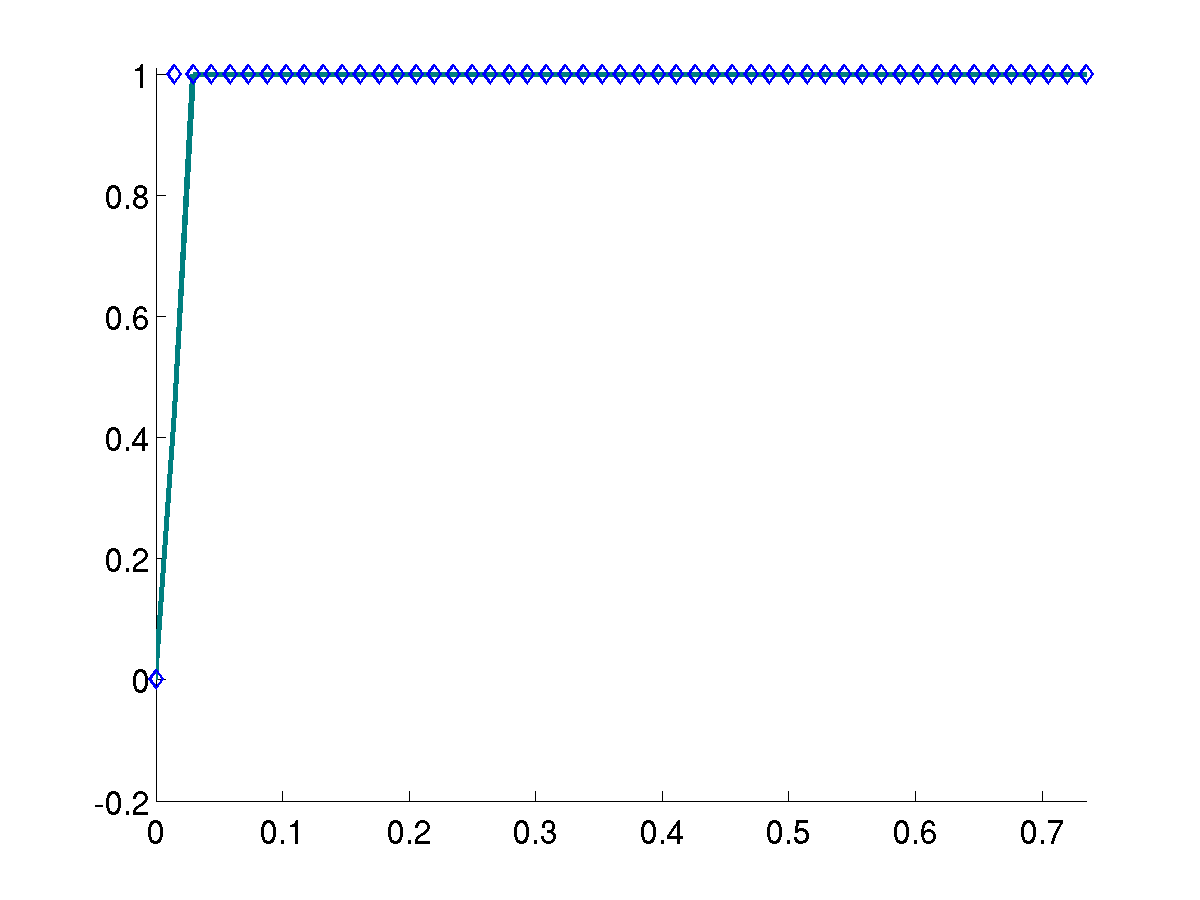} &
    \includegraphics[height = 1.4in]{./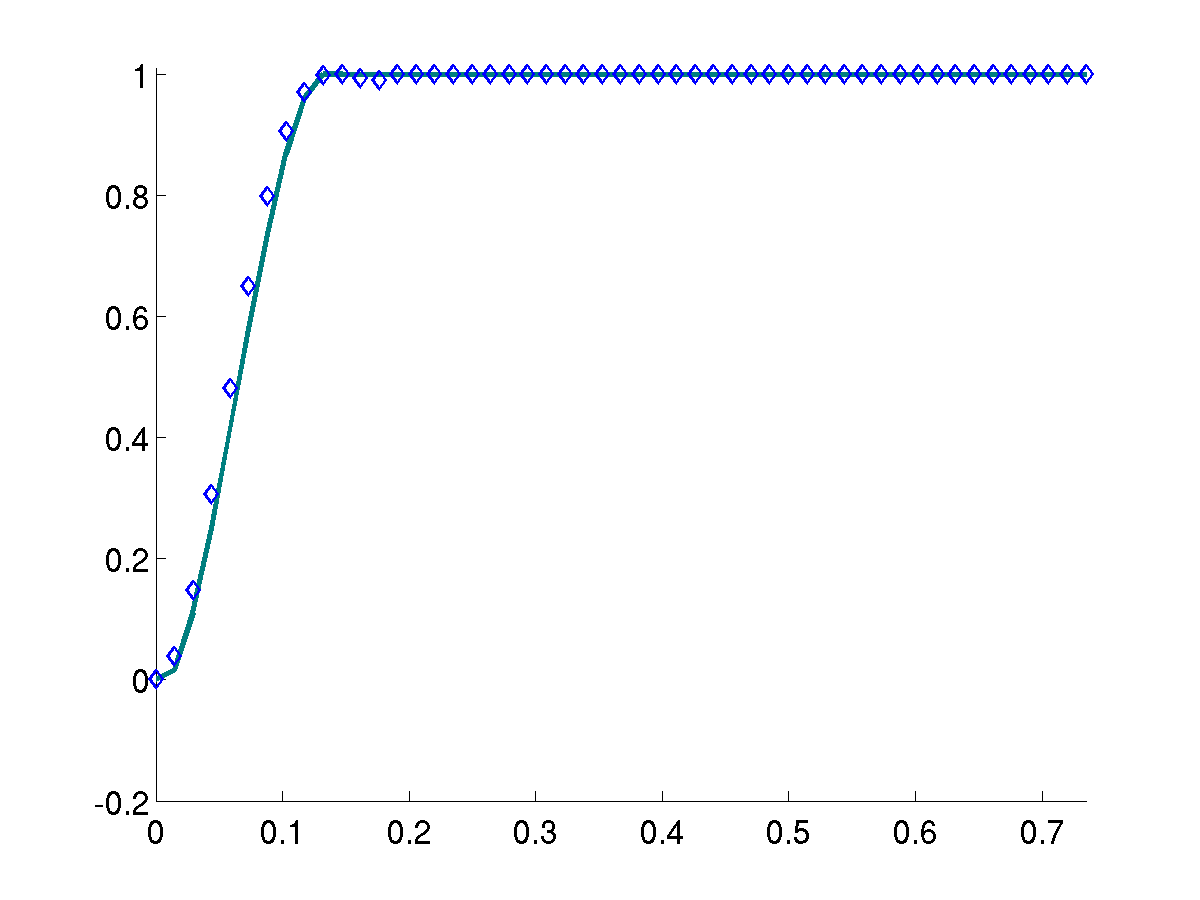} \\
    \includegraphics[height = 1.4in]{./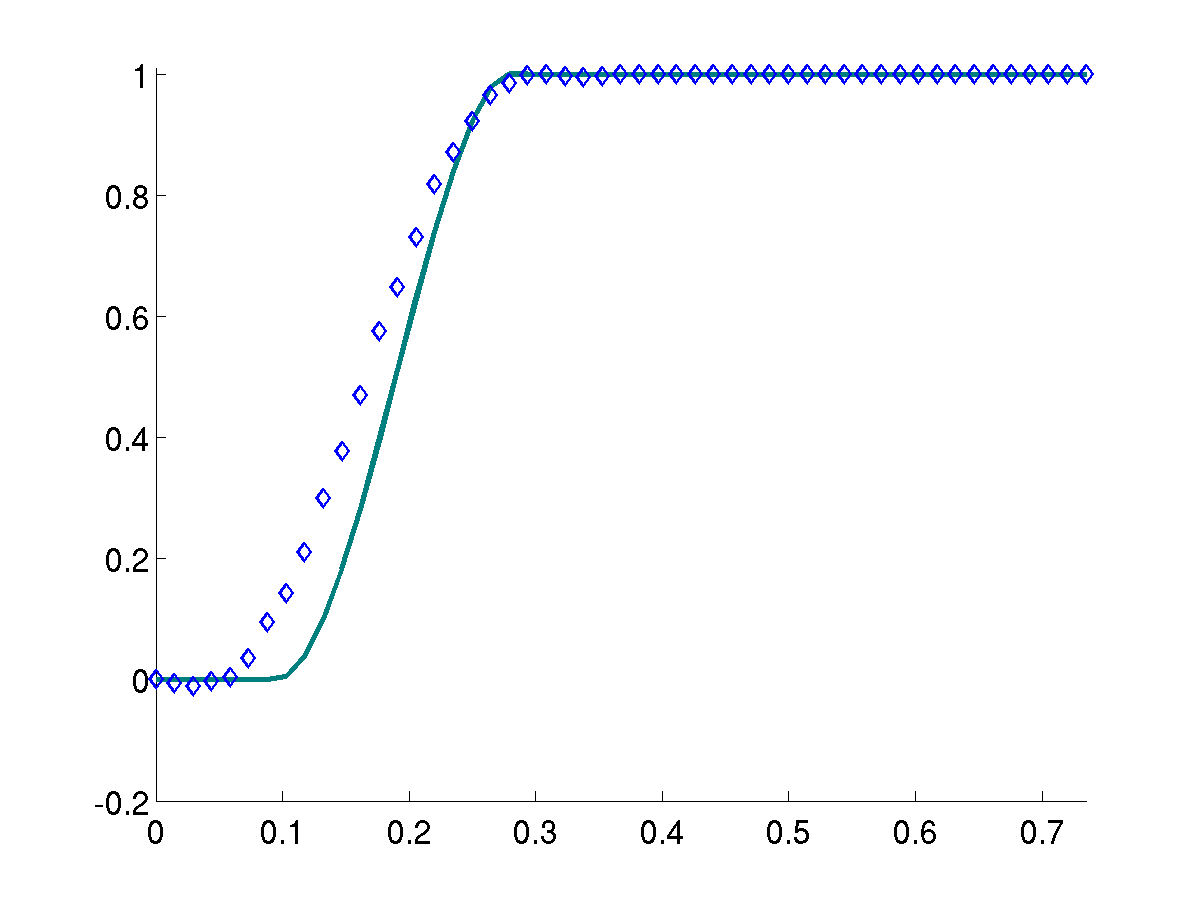} &
    \includegraphics[height = 1.4in]{./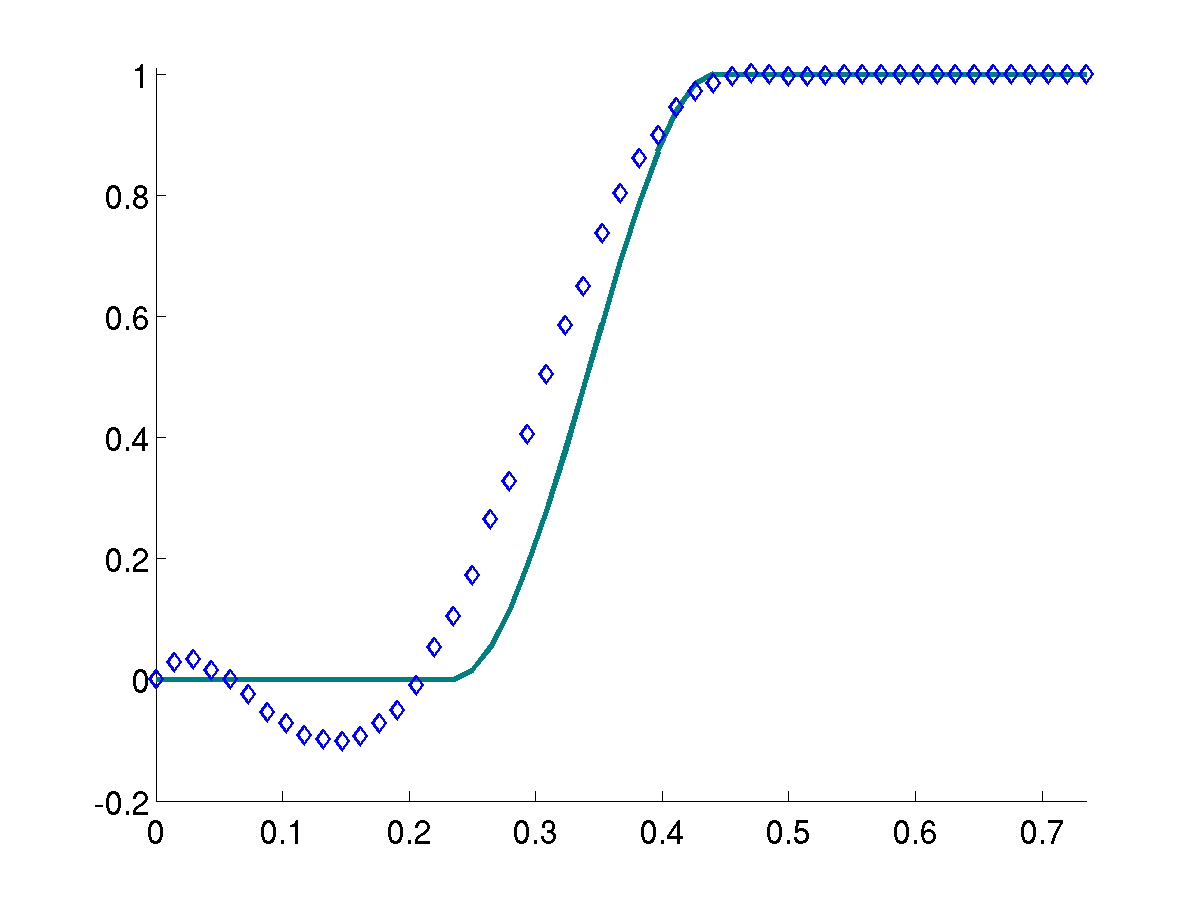} 
    \end{tabular}
  \caption{ 
    \label{tbl:relOverlapVol}
    Plots of relative overlap volumes, $\hat{m}_{\text{overlap}}/
    \hat{m}_{\text{target cap}}$, vs. $r$. Plotted for $s = .25$, $h =
    .025$ and (clockwise from the top left) $z = 0.176 \cdot i$, $i =
    0,\ldots,3$. The markers denote the relative overlap volumes
    estimated in the distance determination procedure, and the lines
    indicate the analytical relative overlap volumes. }
\end{table}

%%%%%%%%%%%%%%%%%%%%%%%%%%%%%%%%%%%%%%%%%%%%%%%%%%%%%%%%%%%%%%%%%
%%  [PAK] WE ROUNDED FROM:
%%  $z = 0.17631 \cdot i$
%%%%%%%%%%%%%%%%%%%%%%%%%%%%%%%%%%%%%%%%%%%%%%%%%%%%%%%%%%%%%%%%%

\begin{figure}
  \centering 
    \includegraphics[height = 3in]{./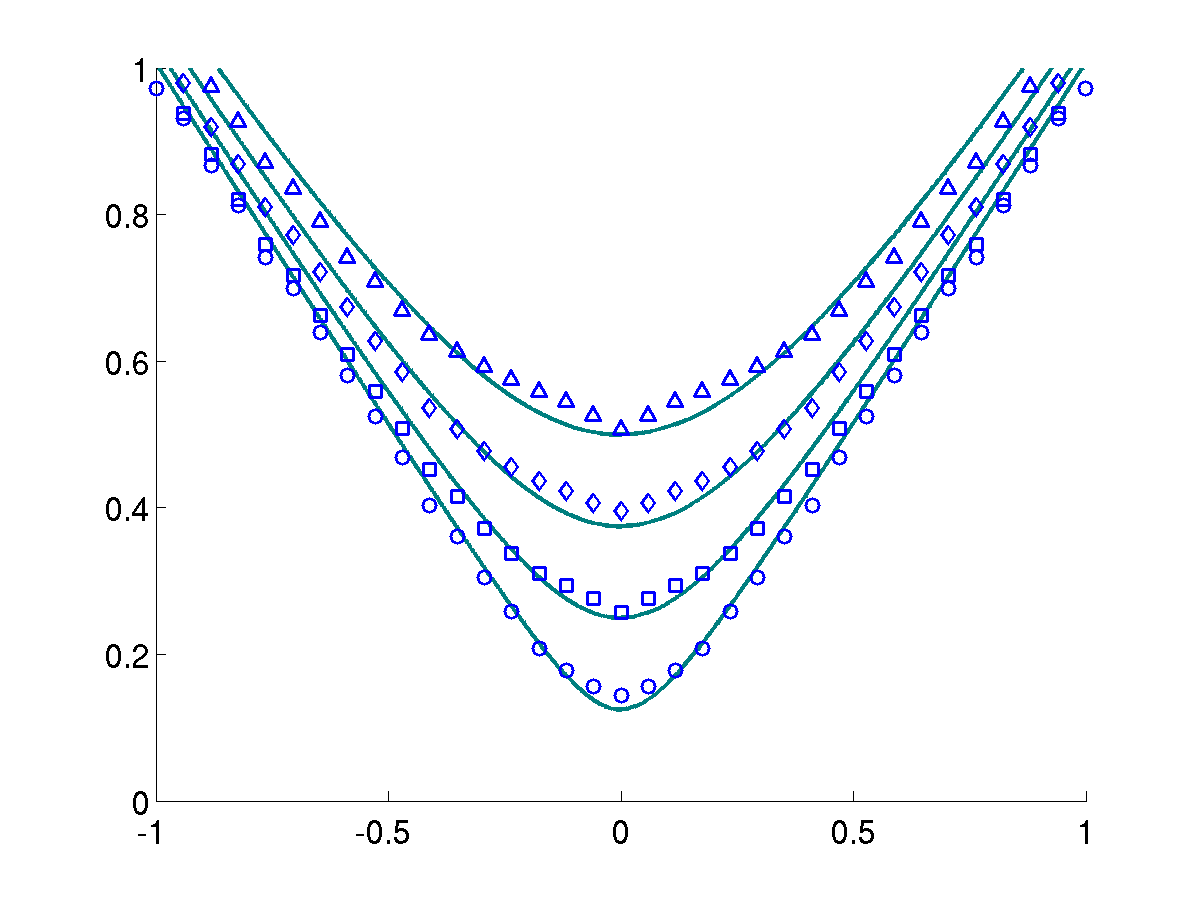}\\
    \caption{
      \label{img:TestResults}
      Distance estimates (markers) for $d(x(y,s),(z_i,0))$ for $y = (0,0)$ and $s =
      .125,.25,.375,.5$, plotted along with the true distances (solid curves). }
\end{figure}

\subsection{Discussion of sources of numerical errors and instability}
\label{subsec:Instability}

Examining Figure \ref{img:TestResults}, one can see that in each of
the estimated distance curves, the distances are over-estimated for $z
= (z_i,0)$ near $y = (0,0)$. This error results in part from the
distance estimation method. For example, when $z = y$ the correct
distance $d = r + s$ would be obtained by taking $r = 0$. On the other
hand, when $z = y$, both of the wave caps used in the distance
estimation procedure are centered on the same point, so for $0 \leq r
\leq h$ the variable wave cap, $\wavecap_\Gamma(z,s,r)$, coincides
with $\wavecap_\Gamma(z,s,r) \cap \wavecap_\Gamma(y,s,h)$. From the
definition of $r_h$, we find that we will have $0 < r_h < h$. Thus the
distance estimate $d_h$ will necessarily over-estimate
$d(y,x(y,s))$. Similar remarks apply for estimating
$d((z_i,0),x(y,s))$ for $(z_i,0)$ near $y$, although the strength of
this effect decreases as $(z_i,0)$ gets further from $y$. We call this
source of error \emph{geometric distortion}, since it results entirely
from the geometry of our distance estimation procedure and is
independent of errors arising from the control problems.  In Figure
\ref{img:GeometricDistortion} we depict the geometric distortion by
repeating our distance estimation technique with exact volume
measurements. Note that the distances in Figure
\ref{img:GeometricDistortion} are overestimated at all points, which
contrasts most with the distances estimated at large offsets in Figure
\ref{img:TestResults}.

\begin{figure}[h!]
  \centering 
  \includegraphics[height = 3in]{./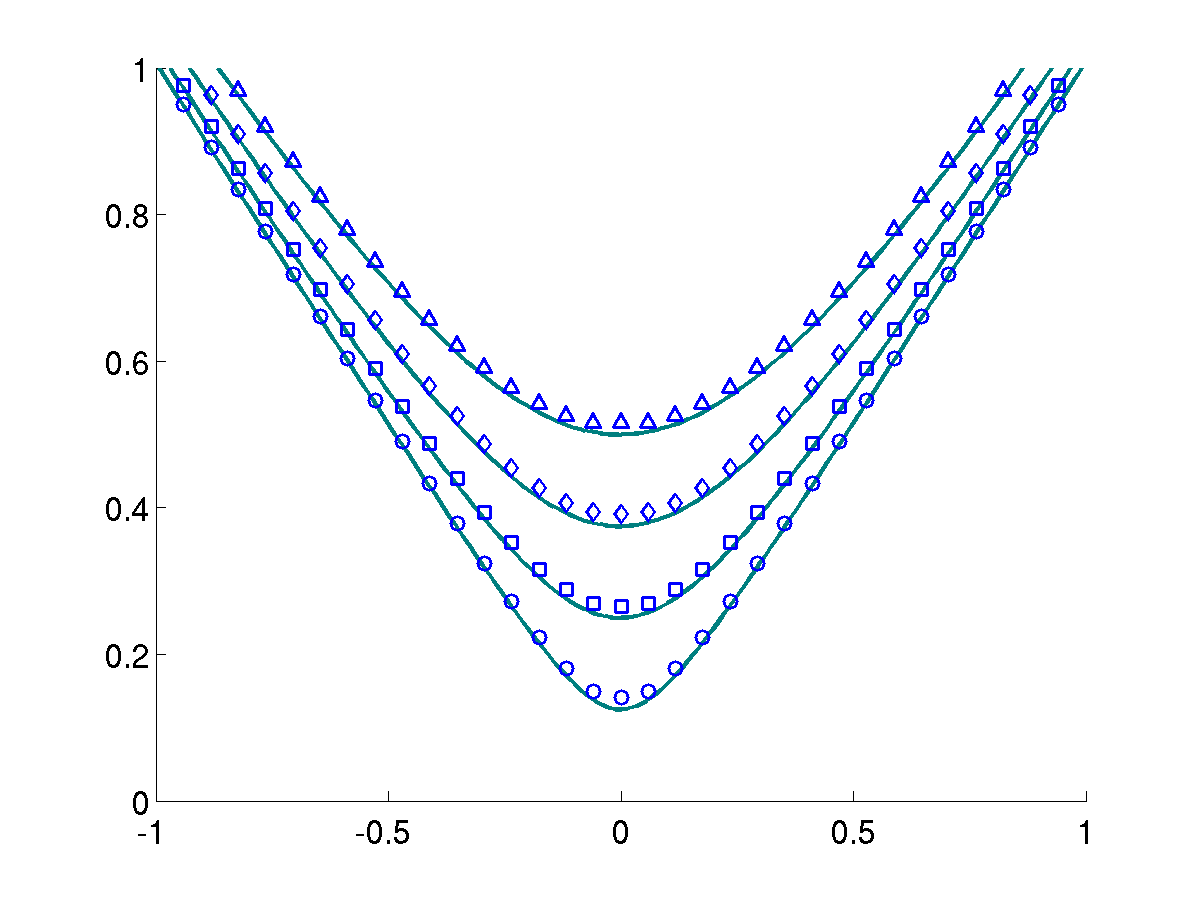}\\
  \caption{
    \label{img:GeometricDistortion}
    Demonstration of geometric distortion. Distances (markers) are
    estimated by using the distance estimation technique on exact
    volumes and plotted for $d(x(y,s),(z_i,0))$ for $y = (0,0)$ and $s
    = .125,.25,.375,.5$, along with the true distances (solid
    curves).}
\end{figure}

Our numerical tests suggest that the dominant source of error comes
from the control step. In order to discuss this instability, we return
to considering the continuum problem.  Taking $\tau \in
C(\overline{\Gamma})$, we can ask whether there exists $f \in
H^s(S_\tau)$ for some $s \in \mathbb{R}$ for which $W_{\tau} f =
1_{M(\tau)}$. This question can be answered by considering the more
general problem of exact controllability, in which one seeks to
determine when the equation $(u^f(T,\cdot), \p_t u^f(T,\cdot)) = (w_0,
w_1)$ has a solution in $H^s(S_\tau)$ for any $(w_0,w_1)$ belonging to
an appropriate space of Cauchy data for the wave equation.

In \cite{Bardos1992}, the question of exact controllability is
considered. One of the main results of that paper is that the ray
geometry of the wave equation can be used to determine necessary and
sufficient conditions for exact controllability. Using the same set of
ideas to those in \cite{Bardos1992}, it is shown in \cite{Bardos1996}
that in order for exact controllability to hold for $W_\tau$ in
$M(\tau)$ from $S_\tau$, the following \emph{geometric controllability
  condition} must hold:

\emph{ Each generalized bicharacteristic $(x(t),t)$ satisfying $x(T)
  \in M(\tau)$, passes over $S_\tau \cup S_\tau'$ in a non-diffractive
  point.  }

Here, $S_\tau' = \{(t,x) \in \Gamma \times (T,2T) : T \leq t \leq T +
\tau(x)\}$. We recall that $S_\tau$ is defined by
(\ref{eqn:Define_S_tau}), and note that $S_\tau'$ is the temporal
reflection of $S_\tau$ across $t = T$.  For a generalized
bicharacteristic $(x(t), t)$, the path $x(t)$ is a unit speed geodesic
in the interior of M and it is reflected according to Snell's law when
it intersects the boundary $\partial M$ transversally.  Tangential
intersections with the boundary can cause the path to glide along the
boundary, and in the case of an infinite-order contact, the path
$x(t)$ can be continued in many ways, see \cite{Bardos1992}.  We refer
also to \cite{Bardos1992} for the definition of non-diffractive
points. The geometric controllability condition is necessary for exact
control to hold from $S_\tau$, since when it fails for $(x,\xi) \in
S^*(M(\tau))$, propagation of singularities implies that for any $s
\in \R$ and any $f \in H^s(S_\tau)$, $(x,\xi) \not \in
\WF(u^f(T,\cdot))$, see e.g. \cite[Section 23]{Hormander1985}. Here,
$\WF(u^f(T,\cdot))$ denotes the wave front set of $u^f(T,\cdot)$, and
we refer to \cite[Def.  8.1.2]{Hormander1990} for its definition. We
have also provided the definition of wave front set in Appendix
\ref{appendix:WFComputation}. Thus, if $w \in L^2(M(\tau))$ has
$(x,\xi) \in \WF(w)$ then, for each $s \in \R$, there does not exist
$f \in H^s(S_\tau)$ for which $W_\tau f = w$.

In our computational experiment, the geometric controllability
condition actually fails over every point in $M(\tau)$. This is due to
the fact that at each $x \in M(\tau)$ there exists a family of
unit-speed geodesic rays with $(\gamma(T),\dot{\gamma}(T)) = (x,\xi)
\in S_x(M)$ and $\xi$ belonging to a cone over $x$, for which the
corresponding geodesics $\gamma$ fail to pass over $S_\tau \cup
S_\tau'$. In our computational experiment, we observe instabilities
near those $x \in M(\tau)$ where $\WF(1_{M(\tau)})$ meets the cone
over which exact control fails. In the case of $\tau = \tau_{y}^{s+h}
\vee s 1_\Gamma$, these effects occur where $\p M(\tau)$ fails to be
$C^{\infty}$ smooth and $\{x\} \times (\R^n \setminus 0) \subset
\WF(1_{M(\tau)})$ . We refer to Appendix \ref{appendix:WFComputation}
for further analysis. In particular these effects occur for $x$ in the
bottom left and right edges of $\wavecap_\Gamma(y,s,h)$, where $\p
M(\tau)$ fails even to be $C^1$. In addition, we similarly observe
instabilities near the points $(\pm L, -s)$, where the flat portion of
$\p M(\tau)$ transitions into a circle and fails to be $C^2$.

We demonstrate these effects in Figure \ref{fig:ut_with_cap}, by
plotting a wavefield $u^f(T,\cdot)$ approximating $1_{M(\tau)}$ for
$y = (0,0), s = .5$, and $h = .2$. The former instabilities occur near
the points $(\pm .5,-.5)$, and the latter instabilities occur near
$(\pm 1.15, -.5)$. We contrast this with the case where $\tau =
\tau_{y}^{s+h}$, which we show in Figure \ref{fig:ut_disk}. In this
second example, the domain of influence is a disk and every co-vector
in $\WF(1_{M(\tau)})$ can be controlled, and unlike the first example,
we observe no instabilities.  Note that in all of the examples in
Figure \ref{img:cornerConvergence} we use a smaller $\Gamma$ than in
our distance calculations, using $L = 1.153$ and a finer basis with
$a_t = a_x = 16 \cdot 10^3$.

%%%%%%%%%%%%%%%%%%%%%%%%%%%%%%%%%%%%%%%%%%%%%%%%%%%%%%%%%%%%%%%%%
%%  [PAK] WE ROUNDED FROM:
%%  $L = 1.1534$
%%%%%%%%%%%%%%%%%%%%%%%%%%%%%%%%%%%%%%%%%%%%%%%%%%%%%%%%%%%%%%%%%

\floatsetup[figure]{subcapbesideposition=center}

\begin{figure}[htb!]
  %% \captionsetup[subfigure]{labelformat=simple}
  \centering
  
  \sidesubfloat[]{
    \includegraphics[width=4.0in]{./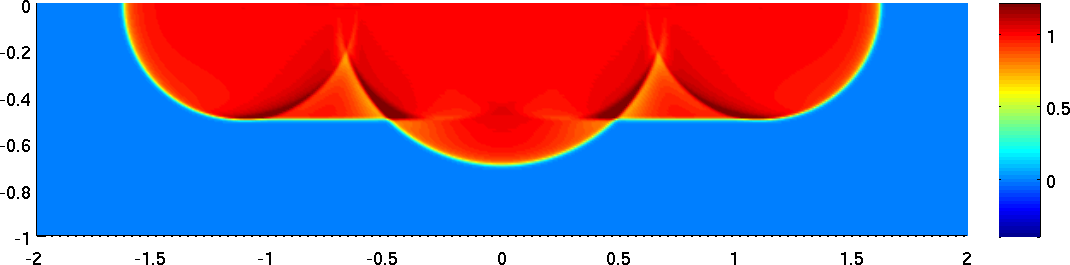} 
    \label{fig:ut_with_cap}
  }
  \hfill

  \sidesubfloat[]{
    \includegraphics[width=4.0in]{./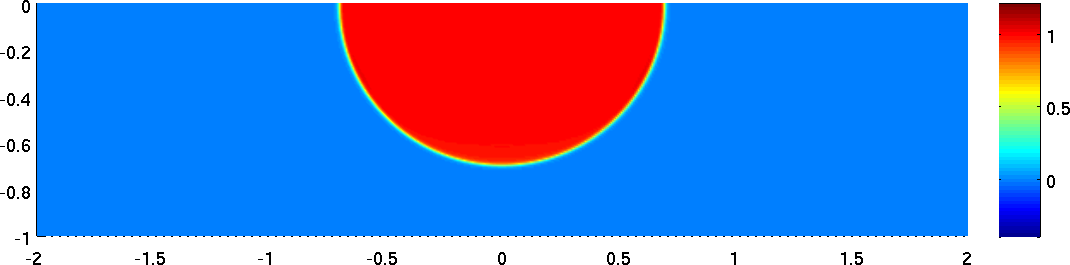} 
    \label{fig:ut_disk}
  }
  \hfill

  \sidesubfloat[]{    
    \includegraphics[width=4.0in]{./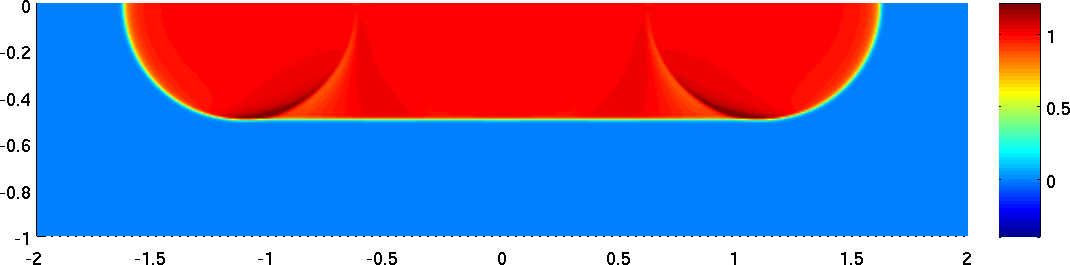}
    \label{fig:ut_without_cap}
  }
  \hfill

  \sidesubfloat[]{
    \includegraphics[width=4.0in]{./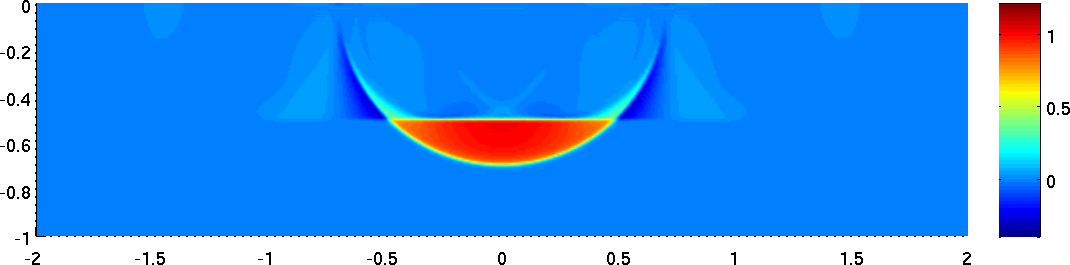}  
    \label{fig:ut_diff}
  }
  \hfill

  \caption{\label{img:cornerConvergence} (a) wavefield demonstrating
    instability of the solution to the control problem when
    $\WF(1_{M(\tau)})$ contains uncontrollable directions over $(\pm
    .5,-.5)$ and $(\pm 1.15, -.5)$ (b) A wavefield for which all
    directions in $\WF(1_{M(\tau)})$ are controlled. (c) Another
    wavefield demonstrating instability, with uncontrollable
    directions in $\WF(1_{M(\tau)})$ over $(\pm 1.15, -.5)$. (d) The
    difference between the wavefields in (a) and (c), note that this
    corresponds to an approximation to $1_{M(\wavecap(y,s,h))}$ as
    used in the distance estimation procedure. Moreover, the
    instabilities in (a) and (c) located over $(\pm 1.15, -.5)$ cancel
    each other.}
\end{figure}

In Figure \ref{fig:ut_without_cap} we plot the wavefield $u^f(T,\cdot)$
that approximates $1_{M(s1_\Gamma)}$. Note that as in the case of
$\tau = \tau_y^{s+h} \vee s1_\Gamma$, we observe instabilities near
the points $(\pm 1.15, -.5)$. In Figure \ref{fig:ut_diff} we plot the
difference between the wave fields approximating
$1_{M(\tau_y^{s+h}\vee s1_\Gamma)}$ and $1_{M(s1_\Gamma)}$, and note
that this difference yields an approximation to the characteristic
function of $\wavecap_\Gamma(y,s,h)$. In particular, notice that the
instabilities observed near $(\pm 1.15, -.5)$ in Figures
\ref{fig:ut_with_cap} and \ref{fig:ut_without_cap} completely cancel
in Figure \ref{fig:ut_diff}. Since our distance determination relies
primarily on the volumes of wave caps, which are obtained by taking
differences in this fashion, we find that the instabilities near the
cap bases tend to provide the main source of error for our distance
estimation procedure.

\section{Conclusions}

In this paper we have demonstrated a method to construct distances
between boundary points and interior points with fixed semi-geodesic
coordinates. The procedure is local in that it utilizes the local
Neumann-to-Dirichlet map for an acoustic wave equation on a Riemannian
manifold with boundary. Our procedure differs from earlier results in
that it utilizes volume computations derived from local data in order
to construct distances.  Finally, we have provided a computational
experiment to demonstrate the implementation of our distance
determination procedure and we have discussed the main sources of
numerical error.

\Appendix

\section{Wave front set of $1_{M(\tau)}$}
\label{appendix:WFComputation}

In Section~\ref{sec:Numerics} all of the functions $\tau$ that we
consider give rise to sets $M(\tau)$ with piecewise smooth
boundary. For all such $\tau$, if $x \in \p M(\tau)$ is a point for
which $\p M(\tau)$ is not smooth at $x$, then either $\p M(\tau)$ is
not $C^1$ at $x$, or $\p M(\tau)$ fails to be $C^2$ at $x$. In this
section, we examine the former case, by computing the wave front set
of $1_{M(\tau)}$ over a point $x$ where $\p M(\tau)$ fails to be
$C^1$. The other case is similar and we omit it. In particular, we
will show the following lemma:

\begin{lemma}
  \label{lemma:WFofCharFunc}
  Let $h: \R \to \R$ be continuous, and suppose that $h$ is smooth on
  $(-\infty, 0)$ and on $(0, \infty)$, and that $h$ does not belong to
  $C^1(\R)$.  Let $A$ be the set, $A = \{(x^1, x^2) \in \R^2:\ x^2 \le
  h(x^1)\}.$ Then, $\{0\} \times (\R^2 \setminus 0) \subset \WF(1_A)$.
\end{lemma}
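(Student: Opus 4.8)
The plan is to work directly from the definition of the wave front set via the Fourier transform. Since the conclusion concerns $\WF(1_A)$ over the origin, that point must lie on $\p A$, so I take $h(0)=0$ (the corner sits at the origin). Writing $w=1_A$, I must show that for \emph{every} $\xi_0\in\R^2\setminus 0$ there is no cutoff $\phi$ with $\phi(0)\neq 0$ whose localized transform $\widehat{\phi w}$ decays rapidly in a cone about $\xi_0$. By the usual localization property of the wave front set (multiplication by a smooth compactly supported factor amounts to convolving the transform with a Schwartz function, which preserves rapid decay in slightly narrower cones), it suffices to produce, for each $\xi_0$, a \emph{single} cutoff $\psi$ supported in an arbitrarily small ball about $0$, with $\psi(0)\neq 0$, for which $\widehat{\psi w}$ fails to decay rapidly along $\xi_0$. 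I set $a:=h'(0^+)$ and $b:=h'(0^-)$; these are the slopes of the two tangent lines at the corner, and I treat the generic case in which they exist and are finite. A short argument with the mean value theorem shows that if both one-sided limits of $h'$ existed and agreed then $h$ would be $C^1$, so the hypothesis $h\notin C^1$ forces $a\neq b$.

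The computation runs as follows. For $\xi_2\neq 0$ I integrate out $x^2$ over $(-\infty,h(x^1)]$ and integrate by parts against $e^{-ix^2\xi_2}$; the only surviving boundary contribution sits on the graph, and after discarding the terms carrying an extra factor $\xi_2^{-1}$ I am left with the leading piece
\begin{equation*}
  \widehat{\psi w}(\xi)=\frac{-1}{i\xi_2}\int_{\R}\psi\bigl(x^1,h(x^1)\bigr)\,e^{-i\left(x^1\xi_1+h(x^1)\xi_2\right)}\,dx^1+O\!\left(|\xi|^{-3}\right).
\end{equation*}
I then view the remaining one-dimensional integral as an oscillatory integral with phase $\Phi(x^1)=x^1\xi_1+h(x^1)\xi_2$, split it at the corner as $\int_{-\infty}^0+\int_0^\infty$, and integrate by parts once on each half. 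Because $h$ is smooth up to $0$ from either side, the endpoint terms at $x^1=0$ survive and combine, using $\Phi(0)=0$ and $\Phi'(0^{+})=\xi_1+a\xi_2$, $\Phi'(0^{-})=\xi_1+b\xi_2$, into a nonzero multiple of
\begin{equation*}
  \frac{\psi(0)\,(b-a)}{\left(\xi_1+a\xi_2\right)\left(\xi_1+b\xi_2\right)}.
\end{equation*}
Thus, on any conic set on which both $\xi_1+a\xi_2$ and $\xi_1+b\xi_2$ are comparable to $|\xi|$, one has $\widehat{\psi w}(\xi)\asymp|\xi|^{-2}$, which is not rapid decay. Since $\psi(0)\neq 0$ and $a\neq b$, the coefficient is genuinely nonzero, so every direction off the two conormal lines $\{\xi_1+a\xi_2=0\}$ and $\{\xi_1+b\xi_2=0\}$ lies in the fiber $\WF_0:=\{\xi:(0,\xi)\in\WF(1_A)\}$.

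The main obstacle is to make sure that this corner contribution is really the dominant one, and in particular that it is neither cancelled nor swamped. Two points need care. First, the smooth arcs of $\p A$ on either side of the corner produce stationary points of $\Phi$ whenever $-\xi_1/\xi_2$ lies in the range of $h'$; a nondegenerate stationary point contributes at order $|\xi|^{-3/2}$ and could dominate the $|\xi|^{-2}$ corner term. This is exactly why I localize: choosing $\mathrm{supp}\,\psi$ small enough that $h'$ stays within $\epsilon$ of $a$ on the right and of $b$ on the left confines all stationary directions to narrow cones about the two conormal lines, so that for $\xi_0$ off those lines the phase is nonstationary on $\mathrm{supp}\,\psi$ and the endpoint analysis above is valid with remainders controlled uniformly by $|\xi|^{-3}$. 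Second, there remain the two conormal directions themselves; these I recover from the fact that the fiber $\WF_0$ is a closed cone, together with the observation that the directions already shown to lie in $\WF_0$ form a dense subset of $\R^2\setminus 0$. Closed plus dense gives $\WF_0=\R^2\setminus 0$, which is the claim. I expect the most delicate cases to be the degenerate corners, where a one-sided limit of $h'$ is infinite or fails to exist; these should yield to the same scheme after interchanging the roles of $x^1$ and $x^2$ or localizing more carefully, but they are where the uniform control of the oscillatory integral becomes subtle.
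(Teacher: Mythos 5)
Your proposal is correct and follows essentially the same route as the paper's proof: integrate out $x^2$ to reduce to a one-dimensional oscillatory integral along the graph with phase $x^1\xi_1+h(x^1)\xi_2$, split at the corner, integrate by parts on each half to extract an endpoint contribution of size $|\xi|^{-2}$ with coefficient proportional to $(h_+'(0)-h_-'(0))\bigl((\xi_1+h_+'(0)\xi_2)(\xi_1+h_-'(0)\xi_2)\bigr)^{-1}$, and then sweep up the exceptional (conormal and horizontal) directions using that the fiber of the wave front set is a closed cone. Your explicit attention to localizing away from stationary points of the phase and to the failure mode where one-sided derivatives of $h$ do not exist is a sound elaboration of what the paper handles implicitly via its requirement that $\phi$ have no critical points on the (small) support of the cutoff.
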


%% From the lemma, it follows that if $\tau$ is one of the functions
%% considered in Section~\ref{sec:Numerics} and $x \in \p M(\tau)$ is a
%% point where $\p M(\tau)$ fails to be $C^1$ at $x$, then $\{x\} \times
%% (\R^2 \setminus 0) \subset \WF(1_{M(\tau)})$. Put differently,
%% $\WF(1_{M(\tau)})$ contains all cotangent directions above the point
%% $x$.

From the lemma, it follows that if $\tau$ is one of the functions
considered in Section~\ref{sec:Numerics} and $\p M(\tau)$ fails to be
$C^1$ at $x \in \p M(\tau)$, then $\{x\} \times (\R^2 \setminus 0) \subset
\WF(1_{M(\tau)})$. Put differently, $\WF(1_{M(\tau)})$ contains all
cotangent directions above the point $x$.

Before proceeding with the proof, we recall the definition of wave
front set,
\begin{definition}
  Let $X \subset R^n$ open. If $u \in \mathcal{D}'(X)$, then the
  closed subset of $X \times (\R^n \setminus 0)$ defined by,
  \begin{equation*}
    \WF(u) = \{ (x,\xi) \in X \times (\R^n \setminus 0) : \xi \in
    \Sigma_x(u) \}
  \end{equation*}
  is called the \emph{wave front} set of $u$, where $\Sigma_x(u)$ is the set
  \begin{equation*}
    \Sigma_x(u) = \bigcap_{\phi} \Sigma(\phi u),\quad \phi \in
    C_0^\infty(X),~\phi(x) \neq 0,
  \end{equation*}
  and, for $v \in \mathcal{D}'(X)$, $\Sigma(v)$ is the complement of
  the set of $\eta$ in $\R^n \setminus 0$ for which there exists a
  conic neighborhood $V$ of $\eta$ such that for each $N \in
  \mathbb{N}$ there exist $C_N > 0$ such that, for all $\xi \in V$,
  \begin{equation*}
    |\hat{v}(\xi)| \leq C_N (1 + |\xi|)^{-N}.
  \end{equation*}
\end{definition}

To prove Lemma \ref{lemma:WFofCharFunc}, we require the following
result,

\begin{lemma}
Let $h \in \R$, $u \in C_0^\infty(\R)$. Suppose that $\phi \in
C^\infty(\R)$ is real valued and has no critical points in $\supp(u)$.
Then
\begin{align*}
&\int_{-\infty}^{h} u(x) e^{-i\lambda \phi(x)} dx
= \frac {i  u(h) e^{-i\lambda \phi(h)}}{\lambda \phi'(h)}
+  \frac {\left. \p_x( u /\phi') \right|_{x=h} e^{-i\lambda \phi(h)}}{\lambda^2 \phi'(h)} 
+ \frac {R(\lambda,h,\phi,u)} {\lambda^3},
\end{align*}
where $|R|$ is bounded by a constant that depends only on $\supp(u)$,
$\min_{x \in \supp(u)} |\phi'(x)|$ and the $C^3$ norms of $u$ and $\phi$.
\end{lemma}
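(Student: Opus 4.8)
The plan is to prove the expansion by iterated non-stationary phase integration by parts, using crucially that $\phi' \neq 0$ throughout $\operatorname{supp}(u)$. The starting point is the elementary identity
\begin{equation*}
  e^{-i\lambda\phi(x)} = \frac{i}{\lambda\phi'(x)}\,\p_x e^{-i\lambda\phi(x)},
\end{equation*}
valid wherever $\phi'$ does not vanish, in particular on $\operatorname{supp}(u)$ where $1/\phi'$ is smooth. I would introduce the transport operator $Dv := \p_x(v/\phi')$ and observe that $D$ does not enlarge supports, so every iterate $D^k u$ is again smooth and compactly supported inside $\operatorname{supp}(u)$.

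First I would substitute the identity above and integrate by parts once. Writing $I(v) := \int_{-\infty}^{h} v\, e^{-i\lambda\phi}\,dx$, the boundary contribution at $-\infty$ vanishes by compact support, yielding the recursion
\begin{equation*}
  I(v) = \frac{i\, v(h)\, e^{-i\lambda\phi(h)}}{\lambda\phi'(h)} - \frac{i}{\lambda}\, I(Dv).
\end{equation*}
Applying this three times, with $v = u$, then $Du$, then $D^2u$, produces a telescoping sum. The $k=0$ boundary term is exactly $i\,u(h)\,e^{-i\lambda\phi(h)}/(\lambda\phi'(h))$; the $k=1$ term, after using $-i\cdot i = 1$, simplifies to $\p_x(u/\phi')|_{x=h}\,e^{-i\lambda\phi(h)}/(\lambda^2\phi'(h))$. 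These are precisely the first two asserted terms. All that survives is the order-$\lambda^{-3}$ contribution
\begin{equation*}
  \frac{R}{\lambda^3} = \left(\frac{-i}{\lambda}\right)^2\frac{i\,(D^2u)(h)\,e^{-i\lambda\phi(h)}}{\lambda\phi'(h)} + \left(\frac{-i}{\lambda}\right)^3 I(D^3u),
\end{equation*}
so that $R = -\,i\,(D^2u)(h)\,e^{-i\lambda\phi(h)}/\phi'(h) + i\,I(D^3u)$.

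The main work, and the only delicate point, is the uniform bound on $R$. Since $|e^{-i\lambda\phi}| = 1$, the bound is manifestly independent of $\lambda$, and because every estimate below is through sup-norms over $\operatorname{supp}(u)$ it is independent of $h$ as well. It remains to estimate $\|D^2u\|_\infty$ and $\|D^3u\|_\infty$: expanding $D^ku$ by the product and quotient rules, each is a finite combination of derivatives of $u$ and of $\phi$ divided by powers of $\phi'$, with denominators controlled by $\min_{\operatorname{supp}(u)}|\phi'| > 0$, numerators controlled by the $C^3$ norms of $u$ and $\phi$, and the integral bounded using the finite length of $\operatorname{supp}(u)$. The hard part is precisely this derivative bookkeeping: one must track carefully which derivatives of $u$ and $\phi$ enter $D^2u$ and $D^3u$ to confirm that $|R|$ depends only on the quantities named in the statement; once that count is pinned down, the remaining estimates are routine and the expansion follows.
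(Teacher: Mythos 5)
Your proof is correct and follows essentially the same route as the paper: the paper introduces $L = i\phi'^{-1}\partial_x$ and integrates by parts three times, which is exactly your recursion $I(v) = i v(h)e^{-i\lambda\phi(h)}/(\lambda\phi'(h)) - (i/\lambda)I(Dv)$ with $D = iL^*$, and it arrives at the identical remainder $R = -i(D^2u)(h)e^{-i\lambda\phi(h)}/\phi'(h) + iI(D^3u)$. The paper likewise leaves the final bookkeeping of the bound on $|R|$ at the same level of detail as you do, so nothing is missing relative to the source.
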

%\begin{proof}

{\em Proof}.
We define the differential operator $L = i\phi'^{-1}\p_{x}$. Then
\begin{align*}
&\int_{-\infty}^{h} u e^{-i\lambda \phi} dx
= \frac 1 \lambda \int_{-\infty}^{h} u L e^{-i\lambda \phi} dx
\\&\quad
= \left. \frac {i u e^{-i\lambda \phi} }{\lambda \phi'} \right|_{x=h}
+ \frac 1 \lambda \int_{-\infty}^{h} (L^* u) e^{-i\lambda \phi} dx.
\end{align*}
Notice, $L^*u = -i \p_x( u / \phi')$. Then,
\begin{align*}
&\frac 1 \lambda \int_{-\infty}^{h} (L^* u) e^{-i\lambda \phi} dx
= \frac 1 {\lambda^2} \int_{-\infty}^{h} (L^*u) L e^{-i\lambda \phi} dx
\\&\quad
= \left. \frac {\p_x( u /\phi') e^{-i\lambda \phi}}{\lambda^2 \phi'}\right|_{x=h}
+ \frac 1 {\lambda^2} \int_{-\infty}^{h} ((L^*)^2 u) e^{-i\lambda \phi} dx.
\end{align*}
Finally,
\begin{align*}
&\frac{1}{\lambda^2} \int_{-\infty}^{h} ((L^*)^2 u) e^{-i\lambda \phi} dx
= \frac 1 {\lambda^3} \int_{-\infty}^{h} ((L^*)^2u) L e^{-i\lambda \phi} dx
\\&\quad
= \left. \frac {i ((L^*)^2 u) e^{-i\lambda \phi}}{\lambda^3 \phi'}\right|_{x=h}
+ \frac 1 {\lambda^3} \int_{-\infty}^{h} ((L^*)^3 u) e^{-i\lambda \phi} dx. \qquad\endproof
\end{align*}
%\end{proof}

\begin{proof}[of Lemma \ref{lemma:WFofCharFunc}]
  Let $u \in C_0^\infty(\R^2)$ and suppose that $u = 1$ near the origin.
We consider the Fourier transform
\begin{align*}
\widehat{u 1_A}(\lambda \xi)
&= \int_{\R^2} u(x) 1_A(x) e^{-i\lambda \xi x} dx
\\&
= \int_{-\infty}^{+\infty} e^{-i\lambda \xi_1 x^1} \int_{-\infty}^{h(x^1)}
u(x) e^{-i\lambda \xi_2 x^2} dx^2 dx^1,
%= \int_{-\infty}^0 \int_{-\infty}^{h(x^1)}
%u(x) e^{-i\xi x} dx^2 dx^1 + 
%\int_0^{+\infty} \int_{-\infty}^{h(x^1)}
%u(x) e^{-i\xi x} dx^2 dx^1.
\end{align*}
where $\xi$ is a unit vector and $\lambda > 0$. 

Suppose first that $\xi_2 \ne 0$. Then 
\begin{align*}
&\int_{-\infty}^{h(x^1)} u(x) e^{-i\lambda \xi_2 x^2} dx^2
= \frac {i w(x^1) e^{-i\lambda \xi_2 h(x^1)}}{\lambda \xi_2}
+ \frac {v(x^1) e^{-i\lambda \xi_2 h(x^1)}}{\lambda^2 \xi_2^2}
+ \frac {R} {\lambda^3},
\end{align*}
where $w(x^1) = u(x^1, h(x^1))$ and $v(x^1) = \p_{x^2}u(x^1,
h(x^1))$.  Note that $R$ is compactly supported with respect to $x^1$
since $u$ is.  Note also that $w$ and $v$ are compactly supported and
$w = 1$ near the origin. If $\supp(u)$ is small then $\supp(w)$ and
$\supp(v)$ are also small. Moreover, if $\supp(w) \cup \supp(v)$ is
small enough then $h$ is smooth in $(\supp(w) \cup \supp(v)) \setminus
0$. In particular, $w$ and $v$ are smooth then.

We define $\phi(x^1) = \xi_1 x^1 + \xi_2 h(x^1)$, $\phi_\pm = \phi|_{\pm
  x^1 > 0}$, and define also $h_\pm$ analogously.  Suppose that
$\phi_-'(0) = \xi_1 + h_-'(0)\xi_2 \ne 0$. Then $\phi$ has no critical
points in the set $\{x^1 < 0\} \cap \supp(w)$ if $\supp(w)$ is small
enough. Therefore
\begin{align*}
\int_{-\infty}^0 w e^{-i\lambda \phi} dx^1
= \frac {i}{\lambda \phi_-'(0)}
+ \frac {R_+(\lambda,h,\phi,u)} {\lambda^2}.
\end{align*}
Likewise, if also $\phi_+'(0) \ne 0$, then
\begin{align*}
\int_0^{+\infty} w e^{-i\lambda \phi} dx^1
= \frac {-i}{\lambda \phi_+'(0)}
+ \frac {R_-(\lambda,h,\phi,u)} {\lambda^2}.
\end{align*}
So,
$$
\int_{-\infty}^{+\infty} w e^{-i\lambda \phi} dx^1
= i\left( \frac{1}{\phi_-'(0)} -\frac{1}{\phi_+'(0)} \right) \lambda^{-1} + \mathcal O(\lambda^{-2}).
$$ 
An analogous argument applies to $v$, showing that,
$$ \int_{-\infty}^{+\infty} v e^{-i\lambda \phi} dx^1 =
\mathcal{O}(\lambda^{-1}).$$
Hence, we conclude,
$$
\widehat{u 1_A}(\lambda \xi)
= \frac{1}{\xi_2}\left( \frac{1}{\phi_+'(0)} -\frac{1}{\phi_-'(0)} \right) \lambda^{-2} + \mathcal O(\lambda^{-3}).
$$
Thus $\widehat{u 1_A}$ does not decay rapidly if $\phi_+'(0) \ne \phi_-'(0)$
which again is equivalent to $h_+'(0) \ne h_-'(0)$.

To summarize, if $h_+'(0) \ne h_-'(0)$ then all the directions except
possibly $(1,0)$ and the four directions
$$ (-h_\pm'(0) \xi^2, \xi^2), \quad \text{where} \quad |\xi_2| =
(|h_\pm'(0)|^2 + 1)^{-1/2},$$ are in $\Sigma_0(1_A)$. Finally, since
$\Sigma_0(1_A)$, is a closed conic subset of $\R^2 \setminus 0$, we
conclude that $\Sigma_0(1_A) = \R^2 \setminus 0$, and hence $\{0\}
\times (\R^2 \setminus 0) \subset \WF(1_A)$. \qquad
\end{proof}

%\textbf{Acknowledgment.}  

\bibliographystyle{siam} 
\bibliography{Bibliography}

\end{document}